\documentclass{amsart}

\usepackage[margin=1.0in]{geometry} 
\usepackage{amsmath}
\usepackage{mathrsfs}
\usepackage{amsfonts}
\usepackage{dsfont}
\usepackage{amssymb}
\usepackage{graphicx}
\usepackage{accents}
\usepackage{bm}
\usepackage{caption}
\usepackage{tikz-cd}
\usetikzlibrary{positioning,shadings}
\usepackage{float}
\usepackage{natbib}
\usepackage{wasysym}
\usepackage{hyperref}
\usepackage{url}
\hypersetup{
    colorlinks=true,
    citecolor=,
    linkcolor=,
    filecolor=,
    urlcolor=blue
}
\urlstyle{tt}
\setcitestyle{square,numbers}
\setlength{\paperheight}{11in}

\definecolor{OhioRed}{rgb}{0.73047, 0, 0} 
\definecolor{OhioGrey}{rgb}{0.8, 0.8, 0.8}

\theoremstyle{plain}
\newtheorem{theorem}{Theorem}
\newtheorem*{theorem*}{Theorem}

\newtheorem{corollary}[theorem]{Corollary}

\newtheorem*{corollary*}{Corollary}

\newtheorem{proposition}[theorem]{Proposition}
\newtheorem{conj}[theorem]{Conjecture}

\newtheorem{example}[theorem]{Example}

\theoremstyle{definition}
\newtheorem{definition}[theorem]{Definition}
\newtheorem{notation}[theorem]{Notation}

\theoremstyle{remark}
\newtheorem{remark}[theorem]{Remark}

\numberwithin{theorem}{section}
\numberwithin{figure}{section}
\numberwithin{equation}{section}


\newcommand{\C}{\mathbb{C}}

\newcommand{\hh}{\mathscr{H}}
\newcommand{\G}{\mathscr{G}}

\newcommand{\set}[1]{\left\{ #1 \right\}}
\newcommand{\ff}{\mathcal{F}}
\renewcommand{\gg}{\mathcal{G}}

\DeclareMathOperator{\ONE}{id}
\DeclareMathOperator{\ucat}{\mathbf{UCat}}
\DeclareMathOperator{\Tr}{Tr}
\DeclareMathOperator{\TLJ}{TLJ(\Gamma)}
\DeclareMathOperator{\hilb}{\mathbf{BigHilb}}

\renewcommand{\tilde}{\widetilde}
\renewcommand{\bar}{\overline}

\definecolor{OhioRed}{rgb}{0.73047, 0, 0} 
\definecolor{OhioGrey}{rgb}{0.8, 0.8, 0.8}


\makeatletter
    \def\@maketitle{%
  \newpage
  \null
  \vskip 2em%
  \begin{center}%
  \let \footnote \thanks
    {\LARGE \@title \par}%
    \vskip 1.5em%
    {\large
      \lineskip 1.5em%
      \bigskip
      \begin{tabular}[t]{c}%
        \Large{Giovanni Ferrer} \\ \small Department of Mathematics, University of Puerto Rico (Mayag\"uez) \\ \small Email: \texttt{giovanni.ferrer@upr.edu} \bigskip \bigskip \\
        
        \Large{Roberto Hern\'andez Palomares} \\ \small Department of Mathematics, The Ohio State University \\ \small Email: \texttt{hernandezpalomares.1@osu.edu}
      \end{tabular}\par}%
  \end{center}%
  \par
  \vskip 1.5em}
\makeatother

\title{\textsc{Classifying Module Categories for Generalized Temperley-Lieb-Jones $*$-2-Categories}}

\author{Giovanni Ferrer}

\author{Roberto Hernandez Palomares}

\date{November 2018}
\thanks{This research is supported by David Penneys' NSF CAREER grant 1654159.}

\begin{document}

\maketitle 

\begin{abstract}
Generalized Temperley-Lieb-Jones (TLJ) 2-categories associated to weighted bidirected graphs were introduced in unpublished work of Morrison and Walker. We introduce unitary modules for these generalized TLJ 2-categories as strong $*$-pseudofunctors into the $*$-2-category of row-finite separable bigraded Hilbert spaces. We classify these modules up to $*$-equivalence in terms of weighted bi-directed fair and balanced graphs in the spirit of Yamagami's classification of fiber functors on TLJ categories and DeCommer and Yamashita's classification of unitary modules for ${\rm Rep(SU}_q(2))$.
\end{abstract}
\makeatletter
\@setabstract
\makeatother

\section{Introduction}


The Temperley-Lieb-Jones (TLJ) algebras originate in Temperley and Lieb's article on ice-type lattices in statistical mechanics \cite{Temperley1971RelationsProblem}, and they were formalized by Jones in his study of finite index $\rm II_1$ subfactors \cite{Jones1983}.
Jones further used these algebras to define his famous knot polynomial using the Markov trace \cite{Jones1985AAlgebras}. Kauffman showed how to define the Jones polynomial via skein theory \cite{H.Kauffman1987StatePolynomial}, and it was later shown how to obtain the Jones polynomial from ${\rm TLJ}(\delta)$ viewed as a ribbon tensor category \cite{Reshetikhin1990RibbonGroups}.


A bridge between the TLJ categories and the representation categories of quantum groups are the so-called \textit{fiber functors}, which are strong monoidal functors $\rm{TLJ}(\delta)\rightarrow \rm{Vec}$, the category of finite dimensional vector spaces. 
In \cite{Yamagami}, Yamagami 
classified all fiber functors $\rm{TLJ}(\delta)\rightarrow \rm{Vec}$
using the spectra of certain associated (positive) linear maps. 

Now, each fiber functor $\rm{TLJ}(\delta)\rightarrow \rm{Vec}$ equips $\rm{Vec}$ with the structure of a module category for $\rm{TLJ}(\delta)$ \cite[\S7]{Etingof2015TensorCategories}. In fact, module categories for $\rm{TLJ}(\delta)$ were classified as generalized fiber functors into ${\rm BigVec}$, the rigid tensor category of bigraded vector spaces in terms of graphs with bilinear forms \cite{Etingof2004ModuleGraphs}.
In the unitary setting, DeCommer and Yamashita (\cite{DeCommer2013TannakaTheory} and \cite{DeCommer2013TannakaSU2}) classified module C*-categories for $\rm{SU}(2)$ which can be thought of as unitary fiber functors of the form $\ff: {\rm Rep(SU}_q(2))\rightarrow {\rm BigHilb}$, the rigid C*-tensor category (RC*TC) of bigraded Hilbert spaces in terms of \textit{fair and balanced weighted graphs}. (We refer the reader to Definition \ref{j k graded} for more details on bigraded Hilbert spaces, and we refer the reader to \cite{Jones2017} and to section 2.1 of \cite{Henriques2016CategorifiedCategories} for more details on RC*TC's.) Notice that
for an appropriate choice of $\delta$, TLJ$(\delta)$ is a RC*TC unitarily equivalent to ${\rm Rep(SU}_q(2))$.

In their preprint \cite{Morrison2010Graphpre-print}, Morrison and Walker introduce a generalized notion of the TLJ categories (see Definitions 3.1, 3.2, and 3.3 therein).
A \emph{bidirected weighted graph} consists of a countable locally finite directed graph $\Gamma$ together with a weight function $\delta: E(\Gamma)\rightarrow (0,\infty)$ and an involution of the edges denoted by $\overline{\, \cdot \,}$, which reverses the edges and satisfies $\delta(e) = \delta(\overline{\, e \,})$ for each edge $e\in E(\Gamma)$. Associated to a fixed bidirected weighted graph $(\Gamma, \delta, \overline{\, \cdot \,})$, we construct the $*$-2-category ${\rm TLJ}(\Gamma)$, where tensor product is determined by concatenation of paths in $\Gamma$
(see Definition \ref{TLJG} for more details). These TLJ $*$-2-categories generalize the ordinary TLJ categories; indeed, taking $\Gamma$ as follows recovers various TLJ RC*TC's:
\begin{itemize}
\item
a single vertex with a single self-dual loop recovers unshaded unoriented TLJ,
\item
a single vertex with two dual edges recovers unshaded oriented TLJ, and
\item
two vertices with two dual edges between them recovers 2-shaded TLJ.
\end{itemize}
We refer the reader to Example \ref{ejemplitos} for more details. 

In this article, we classify generalized fiber functors and module categories for the $*$-2-category ${\rm TLJ}(\Gamma)$ associated to a weighted bidirected graph $(\Gamma, \delta, \overline{\, \cdot \,})$.
That is, we classify $*$-\textit{pseudofunctors} \cite{NLab2018Pseudofunctor} into the $*$-2-category of separable/countably bigraded Hilbert spaces $\hilb$ (see Definition \ref{Jilberto}).
However, one quickly runs into difficulties arising from non-strictness of this 2-category, so we introduce the \emph{strict} $*$-2-category $\ucat$ of unitary countably semisimple categories with $*$-functors as 1-morphisms and uniformly bounded natural transformations as 2-morphisms (see Definition \ref{ucat}), which is $*$-2-equivalent to $\hilb$.
In this context, we work with strict $*$-pseudofunctors $\ff: \TLJ\rightarrow \ucat$, which are unambiguously determined by their action on the generators of ${\rm TLJ}(\Gamma)$,
whose images are called $q$-\textit{fundamental solutions} (to the conjugate equations) in \cite{DeCommer2013TannakaSU2}.
We refer the reader to Proposition \ref{canonical} for a rigorous statement.

To achieve a classification of these unitary modules, we first generalize the notion of a fair and balanced graph \cite{DeCommer2013TannakaSU2} to \textit{balanced $\Gamma$-fair graphs} \cite{Morrison2010Graphpre-print}, which can intuitively be thought of as $E(\Gamma)$-graded versions of ordinary fair and balanced graphs. 
\\

\noindent
\textbf{Definition \ref{fairgraph}:} We say a weighted directed graph $(\Lambda, w,\pi)$ with a graph homomorphism $\pi: \Lambda \rightarrow \Gamma$ is a $\Gamma$-\textbf{fair} graph if and only if $\pi$ is onto $V(\Gamma)$ and for each 
$e:a\rightarrow b$ in $E(\Gamma)$ and every vertex $\alpha \in \pi^{-1}(a)$
$$
\sum_{\substack{\{ \epsilon \, \mid \text{ source}(\epsilon) = \alpha\\ \text{ and }  \pi(\epsilon) = e\}}} w(\epsilon) = \delta_e.
$$
A remarkable example in this definition occurrs when the edge weighting comes from a vertex weighting $d:V(\Lambda)\rightarrow (0,\infty)$ as a ratio $w(\alpha\rightarrow \beta) = d(\alpha)/d(\beta).$ (Compare with the discussion on the bottom of page 3 of \cite{Morrison2010Graphpre-print}.) In Section \ref{last_section}, we will more closely explore how this notion compares to $\Gamma$-fairness. There are moreover additional desirable properties one could ask of a $\Gamma$-fair graph such as the existence a \textit{balanced} involution:\\

\noindent
\textbf{Definition \ref{balanced}:} We say a $\Gamma$-fair graph $(\Lambda,w,\pi)$ is \textbf{balanced} if and only if there exists an involution (\,$\overline{\, \cdot \,}$\,) on $E(\Lambda)$ that switches sources and targets, such that for every $\epsilon \in E(\Lambda)$
\begin{align*}
    w(\epsilon)w(\overline{\epsilon}) &= 1\ \text{ and}\\
    \pi(\overline{\epsilon}) &= \overline{\pi(\epsilon)}.
\end{align*}
Notice as in \cite[p2 Remarks 1]{DeCommer2013TannakaSU2}
that the existence of such an involution is a property, and not additional structure.
\\

We are now equipped to introduce our main result.
\\

\noindent
\textbf{Theorem \ref{classification}:} \textit{Every balanced $\Gamma$-fair graph arises from a $\Gamma$-fundamental solution in $\hilb$. 
Furthermore, there is an equivalence between isomorphism classes of balanced $\Gamma$-fair graphs and unitary isomorphism classes of strong $*$-pseudofunctors $\TLJ \rightarrow \hilb$.}
\\

We recover Proposition 2.3 in \cite{DeCommer2013TannakaSU2} for $\rm {Rep(SU}_q(2))$ for $q < 0$, by taking $\Gamma$ to be a single vertex and self-dual loop, which recovers unshaded unoriented TLJ.
We give more details in Example \ref{ejemplitos}.



We now provide the reader with a brief outline of this article. 
In Section \ref{Background}, we establish the framework and rigorously define most of the basic notions we use. 
We formally introduce bidirected weighted graphs together with an explanation on how to construct our prototypical $*$-2-category ${\rm TLJ}(\Gamma)$. 
We then introduce abstract $*$-2-categories, allowing us to sketch the $*$-2-equivalence between $\ucat$ and $\hilb$.

In Section \ref{equivalences}, we investigate the unitary equivalence of strong $*$-pseudofunctors $\ff:\rm{TLJ(}\Gamma\rm{)\rightarrow \mathcal{C}}$, where $\mathcal{C}$ is a strict $*$-2-category (see Definition \ref{pseudofunctor equivalence}), which requires the language of 3-categories.  
    In the spirit of \cite[\S2]{Yamagami} and \cite[Def.~1.3]{DeCommer2013TannakaSU2}, we define $\Gamma$-fundamental solutions in $\mathcal{C}$ (Definition \ref{fundamentalsolution}) as a generalization of solutions to the conjugate equations, a.k.a.~the zig-zag equations. 
When $\mathcal{C}$ is strict, $\Gamma$-fundamental solutions determine a unique (strict) unitary module. 
In fact, every such strong $*$-pseudofunctor $\rm{TLJ(}\Gamma\rm{)}\rightarrow \mathcal{C}$
turns out to be unitarily equivalent to a strict one, as stated in Proposition \ref{strictify}. 
Due to this result, it suffices to classify strict $*$-pseudofunctors. 
By means of the $*$-2-equivalence $\hilb\simeq \ucat$, we translate our classification to the case where $\mathcal{C} = \hilb$ to understand unitary equivalence of $\Gamma$-fundamental solutions in $\hilb$. 
    We close this section by following the techniques introduced in \cite{DeCommer2013TannakaSU2}, translating unitary equivalence of strict $*$-pseudofunctors (or that of $\Gamma$-fundamental solutions in $\hilb$) in terms of conjugate anti-linear operators.

Finally, in Section \ref{last_section}, we prove our main classification theorem stated above. 
To do so, we construct a balanced $\Gamma$-fair graph from a $\Gamma$-fundamental solution $S = (V,E,C)$ in $\hilb$. This requires the spectral data arising from the anti-linear forms associated to the maps $\{C^e\}_{e\in E(\Gamma)}$. Conversely, we demonstrate how to construct a strong $*$-pseudofunctor $\ff:\TLJ\rightarrow\hilb$ from any given balanced $\Gamma$-fair graph. We finally prove that these processes are mutually inverse, therefore establishing the desired equivalence. 

In the way of proving this result, we address the question posed by Morrison and Walker \cite{Morrison2010Graphpre-print} with regards to weighted graphs obeying a Perron-Frobenius type condition. (See Remark \ref{MW condition}.) We do so by providing necessary and sufficient conditions for a balanced $\Gamma$-fair graph to be of the type considered by Morrison and Walker. We conclude this last section by suggesting a connection between \textbf{Corollary B} as found in \cite{Coles2018TheAlgebras} involving right pivotal cyclic TLJ$(d)$-modules and our own work in the scope of Morrison and Walker's.

\subsection{Acknowledgements}
We are extremely grateful to David Penneys and Corey Jones for their intense sharing of ideas and techniques, and for all the advising they put into the development of this project. This classification work would not have been possible had they not been as generous with their time and willingness to discuss most of the ideas and proofs contained in this article. We are also very grateful to both mathematics departments at The University of Puerto Rico at Mayag\"uez and The Ohio State University for providing means for this interdepartmental connection to happen. Finally, we would like to thank the National Science Foundation as we were fully financially supported by David Penneys' NSF CAREER grant 1654159 and by the SAMMS program, which is organized by The Ohio State University and the University of Puerto Rico, Mayag\"uez.
\vspace*{\fill}
\section{Background}\label{Background}
\subsection{Graph generated Temperley-Lieb Jones Categories}
\begin{notation}
For a graph $\Gamma$, we denote by $V(\Gamma)$ and $E(\Gamma)$ the vertex set and edge set of $\Gamma$, respectively. 
\end{notation}
\begin{definition}\label{bidirectedgraph}\cite{Morrison2010Graphpre-print}
A weighted bidirected graph $(\Gamma, \delta, \overline{\, \cdot \,} )$  is a countable locally finite directed graph together with a weight function $$\delta: E(\Gamma)\rightarrow (0,\infty)$$ and an
involution called duality given by the map $$\overline{\, \cdot \,}: E(\Gamma)\rightarrow E(\Gamma).$$ Duality reverses sources and targets and the weight function has the property that $\delta(e) = \delta(\overline{e})$. Note that an edge with the same source and target might be self-dual, as loops are allowed in $\Gamma$. For simplicity, we will denote $(\Gamma, \delta, \overline{\, \cdot \,} )$ by $\Gamma$ and $\delta(e)$ by $\delta_e$.
\end{definition}
\begin{example} \label{GammaEx}
Here, we present an example of a bidirected graph where the edges $d$ and $e$ are self-dual. 

\begin{figure}[ht]
\[
\begin{tikzpicture}[thick]
\node (a) at ( 2,0) [circle,draw=black!100,fill=black!20] {};
\node (b) at ( 1,0) [circle,draw=black!100,fill=black!60] {};
\node (c) at ( 0,0) [circle,draw=black!100] {};
\draw[->] (a) to[bend left] node[below] {$\delta_{\overline{c}}$} (b);
\draw[->] (b) to[bend left] node[above] {$\delta_c$} (a);
\draw[->] (c) to [in=150,out=120,loop] node[left] {$\delta_a$} (c);
\draw[->] (c) to [in=210,out=240,loop] node[left] {$\delta_{\overline{a}}$} (c);
\draw[->,blue!50] (a) to [in=30,out=60,loop] node[right] {$\delta_d$} (a);
\draw[->] (a) to [in=-30,out=-60,loop] node[right] {$\delta_e$} (a);
\draw[->] (c) to[bend left] node[above] {$\delta_b$} (b);
\draw[->] (b) to[bend left] node[below] {$\delta_{\overline{b}}$} (c);
\end{tikzpicture}
\]
\caption{Weighted bidirected graph $\Gamma$}
\end{figure}
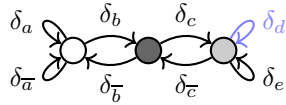

\end{example}
\begin{remark}
In the extent of this article, we only consider connected locally finite graphs.
\end{remark}

\begin{definition}
Let $\Gamma$ be a weighted bidirected graph, and let $a = (a_i)_{i=1}^n$ and $b = (b_j)_{j=1}^m$ be finite ordered sequences in $E(\Gamma)$ defining paths in $\Gamma$. This is, if $e,f$ are consecutive elements in either path, then the source of $f$ equals the target of $e$. Consider the unit square $[0,1]\times[0,1]$ with $n$ and $m$ points distinguished on the bottom and top ends, respectively. We correspond the $i^{th}$ bottom point from left to right with $a_i$ and the $j^{th}$ top point with $b_j$. A \textbf{$\Gamma$-Temperley-Lieb-Jones (TLJ$(\Gamma)$) diagram} from $a$ to $b$ consists of non-crossing smooth arcs starting from a point corresponding to an edge $e$ of $\Gamma$ and ending on either a point on the same unit square edge corresponding to $\overline{e}$, or on a point on the opposite unit square edge corresponding to $e$. 

\begin{itemize}
    \item A string in a diagram represents an edge of $\Gamma$ (from Example \ref{GammaEx}), where the shading of the region to the left of a string represents its source while the shading to the right represents its target. 
    \begin{figure}[H]
    \centering
\begin{tikzpicture}[scale=1/6 pt, semithick] 
    \draw (-6,-6) -- (-6,6) -- (6,6) -- (6,-6) -- cycle; 
    \draw (0,-6) .. controls (0,-4.65) and (1,-4) .. (2,-4); 
    \draw (2,-4) .. controls (3,-4) and (4,-4.65) .. (4,-6); 
    \draw (-4,6) .. controls (-4,4.65) and (-3,4) .. (-2,4) 
.. controls (-1,4) and (0,4.65) .. (0,6);
    \draw (-4,-6) .. controls (-2,3) and (2,-3) .. (4,6); 
    \node at (-4,-7.5) {$\overline{c}$}; 
    \node at (0,-7.5) {$c$};
    \node at (4,-7.5) {$\overline{c}$};
    \node at (-4,7.5) {$d$}; 
    \node at (0,7.5) {$d$};
    \node at (4,7.5) {$\overline{c}$};  
\end{tikzpicture}
\hspace{25pt}
\begin{tikzpicture}[scale=1/6 pt, semithick] 
    \draw (-6,-6) -- (-6,6) -- (6,6) -- (6,-6) -- cycle; 
    \draw[fill=black!20] (0,-6) .. controls (0,-4.65) and (1,-4) .. (2,-4) .. controls (3,-4) and (4,-4.65) .. (4,-6) -- (0,-6); 
    \draw[fill=black!60] (-4,-6) .. controls (-2,3) and (2,-3) .. (4,6) -- (6,6) -- (6,-6) -- (4,-6) .. controls (4,-4.65) and (3,-4) ..  (2,-4) .. controls (1,-4) and (0,-4.65) .. (0,-6) -- (-4,-6); 
    \draw[fill=black!20] (-4,-6) .. controls (-2,3) and (2,-3) .. (4,6) -- (0,6) .. controls (0,4.65) and (-1,4) .. (-2,4) .. controls (-3,4) and (-4,4.65) .. (-4,6) -- (-6,6) -- (-6,-6) -- (-4,-6);
    \draw[fill=black!20] (-4,6) .. controls (-4,4.65) and (-3,4) .. (-2,4) 
.. controls (-1,4) and (0,4.65) .. (0,6) -- (-4,6);
    \draw[blue!50,ultra thick] (-4,6) .. controls (-4,4.65) and (-3,4) .. (-2,4) 
.. controls (-1,4) and (0,4.65) .. (0,6);
    \node at (-4,-7.5) {$\overline{c}$}; 
    \node at (0,-7.5) {$c$};
    \node at (4,-7.5) {$\overline{c}$};
    \node at (-4,7.5) {$d$}; 
    \node at (0,7.5) {$d$};
    \node at (4,7.5) {$\overline{c}$};      
\end{tikzpicture}
\end{figure}
\item By choosing one edge out of each duality pair whose source and target are the same, we assign orientations in order to distinguish the strings representing them.
    \begin{figure}[ht] 
\begin{tikzpicture}[scale=1/12 pt, thick]
    \draw[fill=black!20] (-12,-12) -- (-12,12) -- (12,12) -- (12, -12) -- cycle; 
    \draw[fill=black!60] (-8,-12) .. controls (-4,4) and (4,-4) .. (8,12) -- (12,12) -- (12,-12) -- cycle; 
\begin{scope}
    \clip (-12,-12) rectangle (12,12);
    \draw[fill=white] (2,-12) circle(6);
    \draw (2,-12) circle(3);
    \draw[blue!50,ultra thick] (-6,12) circle(3);
    \draw (2,12) circle(3);
\end{scope}
    \draw (-4,-12) -- (8,-12);
    \draw (1.5,-10) -- (2.5,-9) -- (1.5,-8);
    \draw (-12,-12) rectangle (12,12);
\end{tikzpicture}
\caption{Example of a $\TLJ$ diagram}
\end{figure}
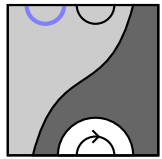
    \item Vertical and horizontal composition of $\TLJ$-diagrams are given by vertical stacking and horizontal juxtaposition, respectively. We remark that one can only vertically compose if the top and bottom ends of the given diagrams are labeled by the same path in $\Gamma$, and that horizontal composition is only possible whenever target of the last edge on the right-bottom (right-top) corner of the first diagram matches the source of the first edge on the left-bottom (left-top) corner of the second diagram.

\begin{equation}
    \begin{tikzpicture}[scale=1/14 pt, thick, baseline={([yshift=-\the\dimexpr\fontdimen22\textfont2\relax] current bounding box.center)}]
        \draw[fill=black!20] (-12,-12) -- (-12,12) -- (12,12) -- (12, -12) -- cycle; 
        \draw[fill=black!60] (-8,-12) .. controls (-4,4) and (4,-4) .. (8,12) -- (12,12) -- (12,-12) -- cycle; 
    \begin{scope}
        \clip (-12,-12) rectangle (12,12);
        \draw[fill=white] (2,-12) circle(6);
        \draw (2,-12) circle(3);
        \draw[blue!50,ultra thick] (-6,12) circle(3);
        \draw (2,12) circle(3);
    \end{scope}
        \draw (-4,-12) -- (8,-12);
        \draw (1.5,-10) -- (2.5,-9) -- (1.5,-8);
        \draw (-12,-12) rectangle (12,12);
    \end{tikzpicture}
    \, \, \, \, 
    \begin{tikzpicture}[scale=0.15, every node/.style={scale=0.75}] 
        \draw circle(0.7);
    \end{tikzpicture} \, \, \, \, 
    \begin{tikzpicture}[scale=1/14 pt, thick, baseline={([yshift=-\the\dimexpr\fontdimen22\textfont2\relax]
                        current bounding box.center)}]
        \draw[fill=black!20] (-12,-12) -- (-12,12) -- (12,12) -- (12, -12) -- cycle; 
        \draw[fill=black!60] (-8,12) .. controls (-4,-4) and (4,4) .. (8,-12) -- (12,-12) -- (12,12) -- cycle; 
    \begin{scope}
        \clip (-12,-12) rectangle (12,12);
        \draw[fill=white] (2,12) circle(6);
        \draw (2,12) circle(3);
        \draw[blue!50,ultra thick] (-6,-12) circle(3);
        \draw (2,-12) circle(3);
    \end{scope}
        \draw (-4,12) -- (8,12);
        \draw (2.5,10) -- (1.5,9) -- (2.5,8);
        \draw (-12,-12) rectangle (12,12);
    \end{tikzpicture} 
     \, \, 
    =
    \, \, 
    \begin{tikzpicture}[scale=1/14 pt, thick, baseline={([yshift=-\the\dimexpr\fontdimen22\textfont2\relax] current bounding box.center)}]
        \draw[fill=black!20] (-12,-12) -- (-12,12) -- (12,12) -- (12, -12) -- cycle; 
        \draw[fill=black!60] (8,-12) .. controls (4,-4) and (-8,-8) .. (-8,0) .. controls (-8,8) and (4,4) .. (8,12) -- (12,12) -- (12,-12) -- cycle; 
    \begin{scope} 
        \clip (-12,-12) rectangle (12,12);
        \draw[fill=white] (2,0) circle(5);
        \draw (2,0) circle(2.5);
        \draw[blue!50,ultra thick] (-6,-12) circle(3);
        \draw (2,-12) circle(3);
        \draw[blue!50,ultra thick] (-6,12) circle(3);
        \draw (2,12) circle(3);
    \end{scope}
        \draw[dashed] (-12,0) -- (12,0);
        \draw (1.5,3.5) -- (2.5,2.5) -- (1.5,1.5); 
        \draw (2.5,-3.5) -- (1.5,-2.5) -- (2.5,-1.5); 
        \draw (-12,-12) rectangle (12,12);
    \end{tikzpicture}
\end{equation}

\begin{equation}
    \begin{tikzpicture}[scale=1/14 pt, thick, baseline={([yshift=-\the\dimexpr\fontdimen22\textfont2\relax] current bounding box.center)}]
        \draw[fill=black!20] (-12,-12) -- (-12,12) -- (12,12) -- (12, -12) -- cycle; 
        \draw[fill=black!60] (-8,-12) .. controls (-4,4) and (4,-4) .. (8,12) -- (12,12) -- (12,-12) -- cycle; 
    \begin{scope}
        \clip (-12,-12) rectangle (12,12);
        \draw[fill=white] (2,-12) circle(6);
        \draw (2,-12) circle(3);
        \draw[blue!50,ultra thick] (-6,12) circle(3);
        \draw (2,12) circle(3);
    \end{scope}
        \draw (-4,-12) -- (8,-12);
        \draw (1.5,-10) -- (2.5,-9) -- (1.5,-8);
        \draw (-12,-12) rectangle (12,12);
    \end{tikzpicture} \, \,
    \otimes \, \,
    \begin{tikzpicture}[scale=1/14 pt, thick, baseline={([yshift=-\the\dimexpr\fontdimen22\textfont2\relax] current bounding box.center)}]
        \draw[fill=black!20] (-12,-12) -- (-12,12) -- (12,12) -- (12, -12) -- cycle; 
        \draw[fill=black!60] (8,12) .. controls (4,-4) and (-4,4) .. (-8,-12) -- (-12,-12) -- (-12,12) -- cycle; 
    \begin{scope}
        \clip (-12,-12) rectangle (12,12);
        \draw[fill=white] (-2,12) circle(6);
        \draw (-2,12) circle(3);
        \draw[blue!50,ultra thick] (6,-12) circle(3);
        \draw (-2,-12) circle(3);
    \end{scope}
        \draw (4,12) -- (-8,12);
        \draw (-1.5,10) -- (-2.5,9) -- (-1.5,8);
        \draw (-12,-12) rectangle (12,12);
    \end{tikzpicture} \, \, 
    =
    \, \, 
    \begin{tikzpicture}[scale=1/14 pt, thick, baseline={([yshift=-\the\dimexpr\fontdimen22\textfont2\relax] current bounding box.center)}]
        \draw[fill=black!20] (-12,-12) -- (-12,12) -- (12,12) -- (12, -12) -- cycle; 
        \draw[fill=black!60] (-10,-12) .. controls (-8,4) and (-4,-4) .. (-2,12) -- (10,12) .. controls  (8,-4) and (4,4) .. (2,-12) -- cycle; 
    \begin{scope}
        \clip (-12,-12) rectangle (12,12);
        \draw[fill=white] (-4.5,-12) ellipse (3.5 and 7);
        \draw[fill=white] (4.5,12) ellipse (3.5 and 7);
        \draw (-4.5,-12) ellipse (1.5 and 4.5);
        \draw (4.5,12) ellipse (1.5 and 4.5);
        \draw[blue!50,ultra thick] (9,-12) ellipse (1.5 and 4);
        \draw[blue!50,ultra thick] (-9,12) ellipse (1.5 and 4);    
        \draw (5,-12) ellipse (1.5 and 4);
        \draw (-5,12) ellipse (1.5 and 4);
    \end{scope}
        \draw (4,-12) -- (-8,-12);
        \draw (-4.7,-8.3) -- (-4.25,-7.5) -- (-5,-7);
        \draw (4.7,8.3) -- (4.25,7.5) -- (5,7);
        \draw[dashed] (0,-12) -- (0,12);
        \draw (-12,-12) rectangle (12,12);
    \end{tikzpicture}
\end{equation}

    \item An involution of $\TLJ$-diagrams is given by reflecting around an horizontal axis and reversing any string orientations.
\begin{equation}
    \left( \, \,
    \begin{tikzpicture}[scale=1/14 pt, thick, baseline={([yshift=-\the\dimexpr\fontdimen22\textfont2\relax] current bounding box.center)}]
        \draw[fill=black!20] (-12,-12) -- (-12,12) -- (12,12) -- (12, -12) -- cycle; 
        \draw[fill=black!60] (-8,-12) .. controls (-4,4) and (4,-4) .. (8,12) -- (12,12) -- (12,-12) -- cycle; 
    \begin{scope}
        \clip (-12,-12) rectangle (12,12);
        \draw[fill=white] (2,-12) circle(6);
        \draw (2,-12) circle(3);
        \draw[blue!50,ultra thick] (-6,12) circle(3);
        \draw (2,12) circle(3);
    \end{scope}
        \draw (-4,-12) -- (8,-12);
        \draw (1.5,-10) -- (2.5,-9) -- (1.5,-8);
        \draw (-12,-12) rectangle (12,12);
    \end{tikzpicture}
    \, \, \right)^{*}
    = \, \, \, \, \,
    \begin{tikzpicture}[scale=1/14 pt, thick, baseline={([yshift=-\the\dimexpr\fontdimen22\textfont2\relax]
                        current bounding box.center)}]
        \draw[fill=black!20] (-12,-12) -- (-12,12) -- (12,12) -- (12, -12) -- cycle; 
        \draw[fill=black!60] (-8,12) .. controls (-4,-4) and (4,4) .. (8,-12) -- (12,-12) -- (12,12) -- cycle; 
    \begin{scope}
        \clip (-12,-12) rectangle (12,12);
        \draw[fill=white] (2,12) circle(6);
        \draw (2,12) circle(3);
        \draw[blue!50,ultra thick] (-6,-12) circle(3);
        \draw (2,-12) circle(3);
    \end{scope}
        \draw (-4,12) -- (8,12);
        \draw (2.5,10) -- (1.5,9) -- (2.5,8);
        \draw (-12,-12) rectangle (12,12);
    \end{tikzpicture} 
\end{equation}

\end{itemize}
\begin{remark}
Similar to the standard Kauffman-diagrams, these TLJ$(\Gamma)$-diagrams are generated by families of cups and caps through vertical and horizontal composition.
\end{remark}

\begin{remark}
Through these graph-generated categories, one can obtain any simple Temperley-Lieb-like diagrams, with any number of string shadings, orientation, and region shadings. 
\end{remark}
\end{definition}
\begin{definition}\label{TLJG}  \cite{Morrison2010Graphpre-print} Let $\Gamma$ be a weighted bidirected graph. We define $\TLJ$, the \textbf{Temperley-Lieb Jones Category generated by} $\Gamma$, as the $*$-2-category defined as follows:
\begin{itemize}
    \item  Objects are vertices of $\Gamma$
    \item $1$-Morphisms are paths on $\Gamma$. In particular, for $a,b\in V(\Gamma),$ $$\hom(a,b):= \set{\mbox{paths in } \Gamma \mbox{ starting at } a \mbox{ and ending at } b}.$$ 
        Namely, the previously defined objects together with this collections of 1-morphisms make up the free category generated by $\Gamma.$ Notice that 1-composition becomes concatenation of paths, whenever the endpoint of the first path equals the starting point of the second, and is undefined otherwise.

    \item $2$-Morphisms from path $a$ to path $b$ are formal $\C$-linear combinations of simple $\TLJ$-diagrams from $a$ to $b$, modulo the $\delta_e$-equivalence relation, which trades closed $e$-loops and $\overline{e}$-loops for the scalar $\delta_e$.
    \begin{equation}
        \begin{tikzpicture}[scale=1/12 pt, thick, baseline={([yshift=-\the\dimexpr\fontdimen22\textfont2\relax] current bounding box.center)}]
            \draw[fill=black!20] (-12,-12) -- (-12,12) -- (12,12) -- (12, -12) -- cycle; 
            \draw[fill=black!60] (8,-12) .. controls (4,-4) and (-8,-8) .. (-8,0) .. controls (-8,8) and (4,4) .. (8,12) -- (12,12) -- (12,-12) -- cycle; 
        \begin{scope} 
            \clip (-12,-12) rectangle (12,12);
            \draw[fill=white] (2,0) circle(5);
            \draw (2,0) circle(2.5);
            \draw[blue!50,ultra thick] (-6,-12) circle(3);
            \draw (2,-12) circle(3);
            \draw[blue!50,ultra thick] (-6,12) circle(3);
            \draw (2,12) circle(3);
        \end{scope}
            \draw (1.5,3.5) -- (2.5,2.5) -- (1.5,1.5); 
            \draw (2.5,-3.5) -- (1.5,-2.5) -- (2.5,-1.5); 
            \draw (-12,-12) rectangle (12,12);
        \end{tikzpicture}
        \, \, \, \,\,
        = \, \, 
        \begin{tikzpicture}[scale=1/12 pt, thick, baseline={([yshift=-\the\dimexpr\fontdimen22\textfont2\relax] current bounding box.center)}]
            \draw[fill=black!20] (-12,-12) -- (-12,12) -- (12,12) -- (12, -12) -- cycle; 
            \draw[fill=black!60] (8,-12) .. controls (4,-4) and (-8,-8) .. (-8,0) .. controls (-8,8) and (4,4) .. (8,12) -- (12,12) -- (12,-12) -- cycle; 
        \begin{scope} 
            \clip (-12,-12) rectangle (12,12);
            \draw[fill=white] (2,0) circle(5);
            \draw[blue!50,ultra thick] (-6,-12) circle(3);
            \draw[blue!50,ultra thick] (-6,12) circle(3);
            \draw (2,12) circle(3);
            \draw (2,-12) circle(3);
        \end{scope}
            \node at (-17.5,0) {$\delta_a$};
            \draw (-12,-12) rectangle (12,12);
        \end{tikzpicture}
        \, \, \, \,\, = \, \, 
        \begin{tikzpicture}[scale=1/12 pt, thick, baseline={([yshift=-\the\dimexpr\fontdimen22\textfont2\relax] current bounding box.center)}]
            \node at (-20,0) {$\delta_a \delta_b$};
            \draw[fill=black!20] (-12,-12) -- (-12,12) -- (12,12) -- (12, -12) -- cycle; 
            \draw[fill=black!60] (8,-12) -- (8,12) -- (12,12) -- (12,-12) -- cycle; 
        \begin{scope} 
            \clip (-12,-12) rectangle (12,12);
            \draw[blue!50,ultra thick] (-6,-12) circle(3);
            \draw[blue!50,ultra thick] (-6,12) circle(3);
            \draw (2,12) circle(3);
            \draw (2,-12) circle(3);
        \end{scope}
            \draw (-12,-12) rectangle (12,12);
        \end{tikzpicture}
    \end{equation}
    \item Furthermore, we define horizontal and vertical composition of 2-morphisms as the linear extension of horizontal and vertical stacking of $\TLJ$-diagrams respectively, and we define an involution $*$ on the 2-morphisms of $\TLJ$ as the anti-linear extension of the involution of $\TLJ$-diagrams.
\end{itemize}
\end{definition}

\begin{remark}
We warn the reader that this is simply a $*$-2-category, as opposed to a 2-category with further analytic properties, such as being C*/W*.
\end{remark}

\begin{example}\label{ejemplitos}
The standard Temperley-Lieb Jones categories TLJ$(\delta)$, Temperley-Lieb Jones categories with oriented strings, and shaded TLJ are generated by the following weighted bidirected graphs $\Gamma_0$, $\Gamma_1$, $\Gamma_2$, and $\Gamma_3$ respectively:
\begin{equation*}
\begin{tikzpicture}[thick, baseline={([yshift=-\the\dimexpr\fontdimen22\textfont2\relax] current bounding box.center)}]
\node (a) at ( 0,0) [circle,draw=black!100,fill=white] {};
\draw[->,black](a) to [loop right] node[below] {$\delta$} (a);
\draw[->,white](a) to [loop left] node[above] {$\delta$} (a);
\end{tikzpicture}
\hspace{25pt}
\begin{tikzpicture}[thick, baseline={([yshift=-\the\dimexpr\fontdimen22\textfont2\relax] current bounding box.center)}]
\node (a) at ( 0,0) [circle,draw=black!100,fill=white] {};
\draw[->](a) to [loop right] node[below] {$\delta$} (a);
\draw[->](a) to [loop left] node[above] {$\delta$} (a);
\end{tikzpicture}
\hspace{25pt}
\begin{tikzpicture}[thick, baseline={([yshift=-\the\dimexpr\fontdimen22\textfont2\relax] current bounding box.center)}]
\node (a) at ( 0,0) [circle,draw=black!100,fill=white] {};
\draw[->,blue](a) to [loop right] node[below] {$\delta_{b}$} (a);
\draw[->,red](a) to [loop left] node[above] {$\delta_{r}$} (a);
\end{tikzpicture}
\hspace{25pt}
\begin{tikzpicture}[thick, baseline={([yshift=-\the\dimexpr\fontdimen22\textfont2\relax] current bounding box.center)}]
\node (a) at ( 0,0) [circle,draw=black!100,fill=white] {};
\node (b) at ( 1,0) [circle,draw=black!100,fill=black!50] {};
\draw[->] (a) to [bend left] node[above] {$\delta$} (b);
\draw[->] (b) to [bend left] node[below] {$\delta$} (a);
\end{tikzpicture}
\end{equation*}

\noindent We include examples of $\TLJ$-diagrams corresponding to each of the previous graphs:

\begin{equation*}
\begin{tikzpicture}[scale=1/6 pt, thick, baseline={([yshift=-\the\dimexpr\fontdimen22\textfont2\relax] current bounding box.center)}] 
    \draw (-6,-6) -- (-6,6) -- (6,6) -- (6,-6) -- cycle; 
    \draw (0,-6) .. controls (0,-4.65) and (1,-4) .. (2,-4) .. controls (3,-4) and (4,-4.65) .. (4,-6); 
    \draw (-4,6) .. controls (-4,4.65) and (-3,4) .. (-2,4) 
.. controls (-1,4) and (0,4.65) .. (0,6);
    \draw (-4,-6) .. controls (-2,3) and (2,-3) .. (4,6); 
\end{tikzpicture}
\hspace{25pt}
\begin{tikzpicture}[scale=1/6 pt, thick, baseline={([yshift=-\the\dimexpr\fontdimen22\textfont2\relax] current bounding box.center)}] 
    \draw (-6,-6) -- (-6,6) -- (6,6) -- (6,-6) -- cycle; 
    \draw (0,-6) .. controls (0,-4.65) and (1,-4) .. (2,-4); 
    \draw (2,-4) .. controls (3,-4) and (4,-4.65) .. (4,-6); 
    \draw (-4,6) .. controls (-4,4.65) and (-3,4) .. (-2,4) 
.. controls (-1,4) and (0,4.65) .. (0,6);
    \draw (-4,-6) .. controls (-2,3) and (2,-3) .. (4,6); 
    \draw (-2.25,3.5) -- (-1.75,4) -- (-2.25,4.5); 
    \draw (-0.7,0.2) -- (0,0) -- (-.2,-0.7); 
    \draw (2.25,-3.5) -- (1.75,-4) -- (2.25,-4.5); 
\end{tikzpicture}
\hspace{25pt}
\begin{tikzpicture}[scale=1/6 pt, thick, baseline={([yshift=-\the\dimexpr\fontdimen22\textfont2\relax] current bounding box.center)}] 
    \draw (-6,-6) -- (-6,6) -- (6,6) -- (6,-6) -- cycle; 
    \draw[blue] (0,-6) .. controls (0,-4.65) and (1,-4) .. (2,-4) .. controls (3,-4) and (4,-4.65) .. (4,-6); 
    \draw[red] (-4,6) .. controls (-4,4.65) and (-3,4) .. (-2,4) 
.. controls (-1,4) and (0,4.65) .. (0,6);
    \draw[red] (-4,-6) .. controls (-2,3) and (2,-3) .. (4,6); 
\end{tikzpicture}
\hspace{25pt}
\begin{tikzpicture}[scale=1/6 pt, semithick, baseline={([yshift=-\the\dimexpr\fontdimen22\textfont2\relax] current bounding box.center)}] 
    \draw (-6,-6) -- (-6,6) -- (6,6) -- (6,-6) -- cycle; 
    \draw[fill=black!50] (0,-6) .. controls (0,-4.65) and (1,-4) .. (2,-4) .. controls (3,-4) and (4,-4.65) .. (4,-6) -- (0,-6); 
    \draw[fill=white] (-4,-6) .. controls (-2,3) and (2,-3) .. (4,6) -- (6,6) -- (6,-6) -- (4,-6) .. controls (4,-4.65) and (3,-4) ..  (2,-4) .. controls (1,-4) and (0,-4.65) .. (0,-6) -- (-4,-6); 
    \draw[fill=black!50] (-4,-6) .. controls (-2,3) and (2,-3) .. (4,6) -- (0,6) .. controls (0,4.65) and (-1,4) .. (-2,4) .. controls (-3,4) and (-4,4.65) .. (-4,6) -- (-6,6) -- (-6,-6) -- (-4,-6);
    \draw[fill=white] (-4,6) .. controls (-4,4.65) and (-3,4) .. (-2,4) 
.. controls (-1,4) and (0,4.65) .. (0,6) -- (-4,6);
    \draw (-4,6) .. controls (-4,4.65) and (-3,4) .. (-2,4) 
.. controls (-1,4) and (0,4.65) .. (0,6);
\end{tikzpicture}
\end{equation*}

\end{example}
\medskip
\subsection{Unitary Modules for TLJ($\Gamma$)}

\begin{notation}
In this paper, we use the terms bicategories and 2-categories indistinguishably. So we make no general assumptions on the strictness of our 2-categories. We will make our assumption of strictness explicit whenever required. We denote the composition of 1-morphisms by $\otimes$ and also for horizontal composition of 2-morphisms. We denote the vertical composition of 2-morphism with $\circ$.
\end{notation}

\begin{definition}
A $*$-2-category is a dagger-category enriched category. Namely, for arbitrary 1-morphisms $A,B,C$ and each pair $\eta\in 2\hom(A\Rightarrow B)$, $\kappa\in 2\hom(B\Rightarrow C)$ we have that $(\kappa\circ\eta)^* = \eta^*\circ\kappa^*$, and $(\kappa\otimes\eta)^* = \kappa^*\otimes\eta^*,$ whenever the 2-morphisms are composable. (See section 2 in \cite{Jones2017} for a more detailed discussion on dagger/ $C^*$-categories.)
\end{definition}

\begin{definition}
Given 2-categories $\mathcal{C}$ and $\mathcal{D}$, a pseudofunctor $\ff: \mathcal{C} \rightarrow \mathcal{D}$ consists of a triplet $(\ff,\mu,\iota),$ defined as follows:
\begin{itemize}
    \item For each 0-morphism $x\in \mathcal{C}$, a 0-morphism $\ff(x)\in \mathcal{D}$;
    \item For each hom-category $C(x , y)$ in $\mathcal{C}$, a functor $\ff_{x,y}: \mathcal{C}(x, y) \rightarrow \mathcal{D}(\ff(x) , \ff(y))$;
    \item For each 0-morphism $x$ of $\mathcal{C}$, an invertible 2-morphism (or 2-isomorphism) $\iota_x: \ONE_{\ff(x)} \Rightarrow \ff_{x,x}(\ONE_x)$.
    \item \textit{The tensorator} is a natural isomorphism $\mu$ given by a collection of 2-isomorphisms of the form  $\mu_{\varphi,\psi}:\ff(\varphi)\otimes\ff(\psi) \Rightarrow \ff(\varphi\otimes\psi),$ where $\varphi,\psi$ are 1-morphisms in $\mathcal C$.
\end{itemize}
We limit ourselves to mention there are some coherence axioms involved, but we do not mention them here. Rather, we direct the interested reader to the description found in \textbf{nLab}.\cite{NLab2018Pseudofunctor}\\ Furthermore, if $\mathcal{C},\mathcal{D}$ are $*$-2-categories, for every 0-morphism $x\in \mathcal{C}$ we have that $\iota_x$ is unitary, for every pair of 1-morphisms $\varphi, \psi$ in $\mathcal{C}$ the tensorator $\mu_{\varphi,\psi}$ is unitary, and if $\ff(\varphi^*) = \ff(\varphi)^*$ holds in $\mathcal{C}$, we then say that $\ff$ is a $*$-pseudofunctor. We conclude this definition by reminding the reader that in a $*$-2-category, unitarity for a 2-morphism $u$ means that $u^* = u^{-1}.$
\end{definition}

\begin{notation}
We use the terms unitary categories and countably semisimple $C^*$-categories indistinguishably. For a detailed explanation on $C^*$-categories see \cite{Jones2017}.
\end{notation}

\begin{definition} \label{ucat}
Let $\ucat$ be the 2-category whose 0-morphisms consist of unitary categories, with $*$-functors as 1-morphisms, and (uniformly) bounded natural transformations as 2-morphisms. We turn $\ucat$ into a $*$-2-category as follows. Let $\alpha: \mathcal{F} \Rightarrow \mathcal{G}$ be a 2-morphism in $\ucat$, where $\mathcal{F},\mathcal{G}: \mathcal{C} \rightarrow \mathcal{D}$ are $*$-functors. Namely, $\alpha = \{\alpha_c:\ff(c)\rightarrow \gg(c)\}_{c \in \mathcal{C}}$ is a family of morphisms in $\mathcal{D}$ indexed by the objects of $\mathcal{C}$. We then define the involution $*$ in $\ucat$ as $\alpha^* := \{\alpha_c^{*_{\mathcal{D}}}:\gg(c)\rightarrow \ff(c)\}_{c\in \mathcal C},$ where $*_{\mathcal{D}}$ in the involution in the unitary category $\mathcal{D}$.
\end{definition}

\begin{remark}\label{UCat is strict}
Notice that $\ucat$ is strict, as tensoring 1-morphisms is given as composition of functors. We will also suppress all associators and unitors.
\end{remark}

\begin{notation}\label{hom notation}
For any given 2-category $\mathcal C,$ say TLJ$(\Gamma)$ or $\ucat$, we will write $f\in 1\mathcal C$ to simply mean $f$ is a 1-morphism between two objects in $\mathcal C,$ without necessarily specifying the objects. We do similarly for 2-morphisms.
\end{notation}

From this point on, we focus our attention into classifying those unitary $\TLJ$-modules which we present as strong $*$-2-functors $\ff:\TLJ \rightarrow \ucat$. In order to do so, we introduce an auxiliary $*$-2-category, which is $*$-2-equivalent to $\ucat$, allowing us to use linear-algebraic tools in the spirit of \cite{Yamagami} and \cite{DeCommer2013TannakaSU2}. 

\begin{definition} \label{j k graded} Let $J$ and $K$ be countable sets. We denote by $\text{Hilb}^{J \times K}_f$ the category of $J \times K$-graded Hilbert spaces 
$$\hh = \bigoplus_{\substack{v \in J \\ w \in K}} \hh_{vw}$$
\\
\noindent such that $\sup_v \sum_w \dim(\hh_{vw}) < \infty$. In other words, $\hh_{vw}$ is finite dimensional for every pair $v,w$ and only a finite number of them are non-trivial when fixing the first index. (One can think of ``row finite'' matrices of finite dimensional Hilbert Spaces.) The morphisms are then defined as bounded operators

$$f = \bigoplus_{\substack{v \in J \\ w \in K}} f_{vw}: \hh \rightarrow \G,$$
where $f_{vw}: \hh_{vw} \rightarrow \G_{vw}$ are morphisms in Hilb$_{f.d.}$. The composition of morphisms $f,g \in \text{Hilb}^{J \times K}_f$ is then given by entry-wise composition, namely
\begin{equation}\label{1mult}
    g \circ f := \bigoplus_{\substack{v \in J \\ w \in K}} g_{vw} \circ f_{vw}.
\end{equation}
\end{definition}

\begin{definition}\label{Jilberto}
We define $\hilb$ as the $*$-2-category of bigraded Hilbert spaces with countable sets as 0-morphisms and $\hom(J , K) = \text{Hilb}^{J \times K}_f$. The composition of 1-morphisms denoted by $\otimes$ for $\hh: J \rightarrow K$ and $\G: K \rightarrow L$ is defined as
$$\hh \otimes \G := \bigoplus_{\substack{v \in J \\ w \in L}} \bigoplus_{k \in K}  \hh_{vk} \otimes \gg_{kw}$$
\\
\noindent where the $\otimes$ on the right side is the tensor product of Hilbert spaces. This operation is analogous to matrix multiplication. Note that for each object $J$, the identity 1-morphism $\ONE_J$ is given by
$$\ONE_J = \bigoplus_{v,w \in J} \delta_{v,w} \cdot \C,$$
where $\delta_{v,w} := 1$ when $v = w$ and $\delta_{v,w} := 0$ otherwise. Recall that the composition of 2-morphisms was defined in Equation (\ref{1mult}). We turn $\hilb$ into a $*$-2-category as follows. For each 2-morphism $f = \bigoplus_{v,w} f_{vw}: \hh \rightarrow \G$ we define its adjoint $f^* := \bigoplus_{v,w} f^*_{vw}: \G \rightarrow \hh$, where $f^*_{vw}$ is the adjoint of $f_{vw}$ as a bounded linear operator.
\end{definition}

It is well known amongst experts that the 2-category of semi-simple linear categories is 2-equivalent to the 2-category of bigraded vector spaces. In a similar fashion, the W$^*$-2-category of countably semi-simple C$^*$-categories is $*$-2-equivalent to the W$^*$-2-category $\hilb$. We will only provide a proof sketch of the latter, as proving these statements here would take us too far afield. 


\begin{proposition} \label{catequivalence} 
There exists a unitary $*$-2-equivalence of 2-categories $\Theta^{-1}: \ucat \rightarrow\hilb$.
\end{proposition}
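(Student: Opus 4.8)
The plan is to build the equivalence object-by-object and hom-category-by-hom-category, using the fact that every countably semisimple $C^*$-category is, up to unitary $*$-equivalence, determined by its (countable) set of isomorphism classes of simple objects together with the finite-dimensionality of the hom-spaces between simples. First I would define $\Theta^{-1}$ on $0$-morphisms: to a unitary category $\mathcal C$ assign the set $\mathrm{Irr}(\mathcal C)$ of isomorphism classes of simple objects, which is countable by hypothesis. On hom-categories, given $*$-functors $\mathcal F,\mathcal G\colon \mathcal C\to\mathcal D$, I would first note that every $*$-functor between unitary categories is determined up to unitary natural isomorphism by a bigraded Hilbert space: namely, fixing representatives $c_i$ of $\mathrm{Irr}(\mathcal C)$ and $d_j$ of $\mathrm{Irr}(\mathcal D)$, one sets $\hh_{ij} := \hom_{\mathcal D}(d_j, \mathcal F(c_i))$, which is finite-dimensional by assumption, carries an inner product from the $C^*$-structure, and satisfies the row-finiteness condition $\sup_i \sum_j \dim \hh_{ij} < \infty$ because $\mathcal F(c_i)$ is a finite direct sum of simples. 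This assignment is functorial in the natural transformation variable: a bounded natural transformation $\alpha\colon\mathcal F\Rightarrow\mathcal G$ induces, by composition with each component, a bounded bigraded operator between the corresponding bigraded Hilbert spaces, compatibly with $\circ$ and with $*$.

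Next I would check that $\Theta^{-1}$ is compatible with the monoidal (horizontal-composition) structure: for $\mathcal C\xrightarrow{\mathcal F}\mathcal D\xrightarrow{\mathcal G}\mathcal E$, decomposing $\mathcal G(\mathcal F(c_i))$ into simples of $\mathcal E$ and using semisimplicity gives a canonical unitary
\begin{equation*}
\hom_{\mathcal E}(e_k,\mathcal G\mathcal F(c_i))\;\cong\;\bigoplus_j \hom_{\mathcal D}(d_j,\mathcal F(c_i))\otimes\hom_{\mathcal E}(e_k,\mathcal G(d_j)),
\end{equation*}
which is exactly the definition of $\otimes$ in $\hilb$; this unitary serves as the tensorator for $\Theta^{-1}$, and one verifies its naturality and the pentagon/triangle coherence axioms from the associativity and unitality of composition of hom-spaces (together with the unitary $\iota$ coming from $\hom_{\mathcal D}(d_j,\mathcal F(c_i)) = \hom_{\mathcal D}(d_j, \bigoplus \mathcal F(c_i))$ identifying the image of the identity functor with $\ONE_J$). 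Then I would establish that $\Theta^{-1}$ is a $*$-$2$-equivalence by exhibiting an essential-surjectivity statement on each level: every countable set $J$ is $\mathrm{Irr}$ of the unitary category $\bigoplus_{j\in J}\mathbf{Hilb}_{f.d.}$, and every row-finite bigraded Hilbert space $\hh\colon J\to K$ arises, up to unitary isomorphism, as the bigraded data of an honest $*$-functor between such categories (reconstruct $\mathcal F$ on simples by $\mathcal F(c_i) := \bigoplus_j \hh_{ij}\otimes d_j$, which lands in $\mathcal D$ precisely by row-finiteness, and extend additively). Fully faithfulness on $2$-morphisms is immediate from the construction; fullness/essential surjectivity on $1$-morphisms is the reconstruction just described.

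The main obstacle I anticipate is not any single hard computation but the bookkeeping of coherence: verifying that the chosen tensorator unitaries satisfy the pseudofunctor hexagon and that $\Theta^{-1}$ together with its pseudo-inverse and the unit/counit pseudonatural transformations satisfies the triangle identities of a $*$-$2$-equivalence, all while tracking the $*$-structures so that every structural isomorphism is unitary. A secondary subtlety is the choice of representatives of simple objects and of the splittings $X\cong\bigoplus \hom(c_i,X)\otimes c_i$ — these choices must be made once and for all so that the functor is strictly well-defined, or else one works with a canonical (choice-free) model; since the statement only claims a $*$-$2$-equivalence (not an isomorphism), either approach suffices, and I would take the shortcut of noting that any two such choices give unitarily naturally isomorphic functors. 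Because the proposition is folklore and the excerpt explicitly says only a sketch will be given, I would present the object and hom-category assignments and the tensorator in detail and then assert the coherence and equivalence verifications as routine, citing the analogous non-unitary statement for bigraded vector spaces.
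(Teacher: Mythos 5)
Your construction is essentially the same as the paper's sketch: you send a unitary category to its countable set $\mathrm{Irr}(\mathcal C)$, a $*$-functor $\mathcal F$ to the bigraded Hilbert spaces $\hom_{\mathcal D}(d_j,\mathcal F(c_i))$ with the inner product coming from the $C^*$-structure, and a bounded natural transformation to the post-composition operator, then treat unitarity of the structure maps, essential surjectivity, and full faithfulness as routine. In fact you supply somewhat more detail (the tensorator unitary and the reconstruction $\mathcal F(c_i):=\bigoplus_j \hh_{ij}\otimes d_j$) than the paper's sketch does, so the proposal is fine.
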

\begin{proof}[Sketch of proof]
For $\mathcal C\in$ $\ucat$, by countable semi-simplicity together with the axiom of choice, there exists a countable set $\mbox{Irr}(\mathcal C)$ defining a complete set of representatives of isomorphism classes of simple objects in $\mathcal C.$ Now, if $\mathcal C, \mathcal D\in \ucat$, for $\ff:\mathcal C\rightarrow \mathcal D,$ a 1-morphism in U-Cat, we produce the category $\text{Hilb}^{\mbox{Irr}(\mathcal C) \times \mbox{Irr}(\mathcal D)}_f$ as follows: for $e_j\in\mbox{Irr}(\mathcal{C}), f_i\in\mbox{Irr}(\mathcal{D}),$ we turn the $i,j$ vector space component $\hh_{i,j}:= \mathcal{D}(f_i\rightarrow \ff(e_j))$ into a Hilbert space by means of the sesqui-linear form $\langle \varphi,\psi \rangle := \psi^*\circ\varphi \in \mbox{End}(f_i) = \mathbb C,$ which defines an inner product. Finally, if $\eta:\ff \Rightarrow \mathcal G$ is a 2-morphism in $\ucat$ presented as a family of functions $\eta = \{\eta_a\}_{a\in \mathcal C},$ we construct a 2-morphism in $\hilb$ using the expression $\eta_{{e_j}_i}\circ - := (\mathcal D(f_i, \ff(e_j))_{i,j}\Rightarrow (\mathcal D (f_i, \mathcal G(e_j))_{i,j}$. We spare the remaining details on why the structure maps are unitary and $\Theta^{-1}$ is essentially surjective on objects and 
fully faithful, thus being part of an equivalence of 2-categories.
\end{proof}


\vspace*{\fill}
\section{Equivalences of Pseudofunctors}\label{equivalences}
In this section, we develop the tools necessary to classify unitary modules for $\TLJ$. Afterwards we rephrase the equivalence of unitary modules in terms of certain anti-linear operators and state some useful properties.

\begin{definition}\label{fundamentalsolution}

Let $\mathcal{C}$ be a $*$-2-category which need not be strict. We define a $\Gamma$-fundamental solution in $\mathcal C$ as a triplet $S = (V,E,C)$ given as follows: $V = \set{V^a}_{a \in V(\Gamma)}$ are 0-morphisms in $\mathcal{C}$ indexed by the vertices of $\Gamma$, $E = \set{E^e}_{e \in E(\Gamma)}$ are 1-morphisms in $\mathcal{C}$ indexed by the edges of $\Gamma$, and $C = \set{C^e}_{e \in E(\Gamma)}$ are 2-morphisms in $\mathcal{C}$, where $C^e: \ONE_{V^a} \Rightarrow E^e \otimes E^{\overline{e}}$ satisfies the following zigzag relations for every $e \in E(\Gamma)$ 
\begin{enumerate}
    \item $\big((C^e)^* \otimes \ONE_{E^e} \big) \circ \big(\ONE_{E^e} \otimes C^{\overline{e}}\big) = \ONE_{E^e}$ and
    \item $(C^e)^* \circ C^e = \delta_e \cdot \ONE_{V^a}$
\end{enumerate}
This is represented diagrammatically as follows:

\begin{equation} \label{fig:zigzag2}
    \begin{tikzpicture}[scale=1/6 pt, semithick, baseline={([yshift=-\the\dimexpr\fontdimen22\textfont2\relax] current bounding box.center)}] 
    \tikzstyle{test}=[text=black]
        \begin{scope}
            \clip (-6,-5) rectangle (6,7);
            \fill[black!20] (-6,-5) rectangle (6,7);
            \fill[black!60] (-6,-5) -- (-4,-5) -- (-4,2) .. controls (-4,-4.65 + 6 + 2) and (-3,-4 + 6 + 2) .. (-2,4) .. controls (-1,4) and (0,-4.65 + 6 + 2) .. (0,2) -- (0,0) .. controls (0,4.65 - 6) and (1,4 - 6) .. (2,-2) .. controls (3,4 - 6) and (4,4.65 - 6) .. (4,0) -- (4,7) -- (-6,7); 
        \end{scope}
        \draw (-6,-5) rectangle (6,7);
        \draw (0,2) .. controls (0,-4.65 + 6 + 2) and (-1,4) .. (-2,4) 
    .. controls (-3,-4 + 6 + 2) and (-4,-4.65 + 6 + 2) .. (-4,-6 + 6 + 2);
        \draw (-2.25,-3.5 + 6 + 2) -- (-1.75, -4 + 6 + 2) -- (-2.25, -4.5 + 6 + 2); 
        \draw (4,6 - 6) .. controls (4,4.65 - 6) and (3,4 - 6) .. (2,4 - 6) 
    .. controls (1,4 - 6) and (0,4.65 - 6) .. (0,6 - 6);
        \draw (1.75,3.5 - 6) -- (2.25,4 - 6) -- (1.75,4.5 - 6); 
        \draw (-4,2) -- (-4, -5); 
        \draw (4,0) -- (4,7); 
        \draw (0,0) -- (0,2); 
        \draw (-3.5, -2) -- (-4,-1.5) -- (-4.5, -2); 
        \draw (3.5,3.25) -- (4, 3.75) -- (4.5,3.25); 
        \node at (-3.3,-4) [test] {$e$};
        \node at (-3.3,1) [test] {$e$};
        \node at (-0.7,1.2) [test] {$\bar{e}$};
        \node at (4.7,1) [test] {$e$};
        \node at (4.7,6) [test] {$e$};
    \end{tikzpicture}
    \,\,\, = \,\,\,
    \begin{tikzpicture}[scale=1/6 pt, semithick, baseline={([yshift=-\the\dimexpr\fontdimen22\textfont2\relax] current bounding box.center)}] 
       \begin{scope}
            \clip (-6,-2) rectangle (6,10);
            \fill[black!60] (-6,-2) rectangle (0,10);
            \fill[black!20] (0,-2) rectangle (6,10);
        \end{scope}
        \draw (-6,-2) rectangle (6,10);
        \draw (0,-2) -- (0,10); 
        \draw (-0.5,3.5) -- (0,4) -- (0.5,3.5); 
        \node at (0.7,-1) {$e$};
        \node at (0.7,9) {$e$};
    \end{tikzpicture}
    \hspace{75pt}
    \begin{tikzpicture}[scale=1/6 pt, semithick, baseline={([yshift=-\the\dimexpr\fontdimen22\textfont2\relax] current bounding box.center)}]
            \fill[black!60] (-8,-5) -- (-8,7) --(4,7) -- (4,-5) -- cycle;
            \draw (-8,-5) -- (-8,7) --(4,7) -- (4,-5) -- cycle;
        \filldraw[black!20] (-2,1) circle(3);
        \draw (-2,1) circle(3);
        \node at (0,-3.3) {};
        \node at (-0.4,1.2) {$\overline{e}$};
        \node at (-3.6,1) {$e$};
        \draw (-2.25,4.5) -- (-1.75,4) -- (-2.25,3.5); 
        \draw (-1.75,-2.5) -- (-2.25,-2) -- (-1.75,-1.5); 
    \end{tikzpicture} 
    \,\,\,\,\, =
    \begin{tikzpicture}[scale=1/6 pt, semithick, baseline={([yshift=-\the\dimexpr\fontdimen22\textfont2\relax] current bounding box.center)}]
        \node at (-10.5,0.5) {$\delta_e$};
        \fill[black!60] (-8,-5) -- (-8,7) --(4,7) -- (4,-5) -- cycle;
        \draw (-8,-5) -- (-8,7) --(4,7) -- (4,-5) -- cycle;
    \end{tikzpicture}
\end{equation}

\end{definition}
\medskip
\begin{example}\label{solinhilb}
Given a bidirected weighted graph $\Gamma$, we shall  describe $\Gamma$-fundamental solution in $\hilb,$ denoted by $(J,H,C)$. Here, $J = \set{J^a}_{a \in V(\Gamma)}$ is a family of (grading) sets indexing the vertices of $\Gamma$, $H = \set{\hh^e}_{e \in E(\Gamma)}$ is a family of bigraded Hilbert spaces graded by the edges of $\Gamma$ such that $\hh^e\in \text{Hilb}^{J^a \times J^b}_f$, and $C = \set{C^e}_{e \in E(\Gamma)}$ is a family of 2-morphisms in $\hilb$ such that $C^e = \bigoplus_v \sum_w C^e_{vw}.$ To further explain this notation trick, for each $(e: a\rightarrow b)\in E(\Gamma)$, for every fixed $v\in J^a,$ summing over $w\in J^b$ collects all the cups corresponding to the triple $(e,v,w)$ and, as $v$ ranks over the whole set $J^a,$ the direct sum places each corresponding combination of $C^e_{vw}$ cups into the appropriate diagonal slot. Here, the maps $C^e$ are collections of linear maps of the form
$$C^e_{vw}: \C \rightarrow \hh^e_{vw} \otimes \hh^{\overline{e}}_{wv},$$
where $v\in J^a$ and $w\in J^b.$ These form solutions to the equations (1) and (2) above, in the following fashion:
\begin{enumerate}
    \item $\big((C_{vw}^e)^* \otimes \ONE_{\hh_{vw}^e} \big) \circ \big(\ONE_{\hh_{vw}^e} \otimes C^{\overline{e}}_{wv}\big) = \ONE_{\hh_{vw}^e}$ and
    \item $(C_{vw}^e)^* C_{vw}^e = \delta_e\cdot 1_{vv}.$
\end{enumerate}

Here,  $1_{vv} \in \ONE_{v,v}^{J^a} = (\delta_{v,w}\cdot \C)_{v,v}$ is the complex number one.
\end{example}

\begin{proposition}\label{canonical}
A $\Gamma$-fundamental solution $S = (V,E,C)$ in a strict $*$-2-category $\mathcal{C}$ uniquely determines a canonical strict $*$-pseudofunctor $\ff_{S}: \TLJ \rightarrow \mathcal C$, such that $\ff_S(a) = V^a$ for every vertex $a$, $\ff_S(e) = E^e$ for every edge $e$, and $\ff_S (^e \cup ^{\overline{e}}) = C^e$ for every cup in $\TLJ$. 
\end{proposition}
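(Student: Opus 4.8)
The plan is to build $\ff_S$ directly on the generating data of $\TLJ$, to verify that each defining relation of $\TLJ$ becomes a true identity in $\mathcal C$, and then to invoke the fact --- recorded in the remarks accompanying Definition \ref{TLJG} --- that every $\TLJ$-diagram is a vertical and horizontal composite of cups and caps; this simultaneously yields existence and forces uniqueness.

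First I would fix $\ff_S$ on the lower cells. On objects, $\ff_S(a) := V^a$. On $1$-morphisms, a path $p = (e_1,\dots,e_n)$ in $\Gamma$ is sent to $\ff_S(p) := E^{e_1}\otimes\cdots\otimes E^{e_n}$, with the empty path at $a$ going to $\ONE_{V^a}$; since concatenation of paths is carried to $\otimes$ on the nose, both the tensorator $\mu$ and the unit constraints $\iota_a$ are forced to be identity $2$-morphisms, which is legitimate precisely because $\TLJ$ and $\mathcal C$ are both strict. On $2$-morphisms, I set $\ff_S(^e\cup^{\overline{e}}) := C^e$ on each cup and, as forced by the $*$-structure, $\ff_S(^e\cap^{\overline{e}}) := (C^e)^*$ on each cap; a simple $\TLJ$-diagram is then sliced into elementary layers --- each layer a horizontal composite of identity $2$-morphisms on through-strands with at most one cup $C^e$ or cap $(C^e)^*$, the edge-labelling constraints on arcs guaranteeing that all these composites are defined in $\mathcal C$ --- and $\ff_S$ of the diagram is set to be the vertical composite of the images of the layers, extended $\C$-linearly to formal linear combinations.

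The content of the argument is well-definedness, which splits into three parts: (i) independence of the assigned value on the chosen layer decomposition of a diagram and on the chosen representative of its isotopy class; (ii) compatibility with planar isotopy; and (iii) compatibility with the $\delta_e$-loop relation. Part (iii) is immediate from zigzag relation (2): closing an $e$-strand into a circle is, after whiskering by identities, the composite $(C^e)^*\circ C^e = \delta_e\cdot\ONE_{V^a}$. Parts (i) and (ii) reduce to the statement that any two such layer decompositions of planar-isotopic diagrams are connected by a sequence of moves of two kinds: (a) applications of the interchange (Godement) law, which hold automatically because $\mathcal C$ is a genuine strict $2$-category; and (b) the two snake/zigzag moves, which are exactly zigzag relation (1) of Definition \ref{fundamentalsolution} applied to $e$ and to $\overline{e}$ (using $\overline{\overline{e}} = e$) --- these jointly exhibit $E^{\overline{e}}$ as a two-sided dual of $E^e$, with coevaluation $C^e$ and evaluation $(C^e)^*$. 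Hence every relation imposed on the $2$-morphism spaces of $\TLJ$ holds in $\mathcal C$, so $\ff_S$ is a well-defined functor on each hom-category; functoriality for horizontal composition and compatibility with the (identity) tensorator follow layerwise, and $\ff_S(\varphi^*) = \ff_S(\varphi)^*$ holds because the involution on diagrams is reflection with reversal of orientations, which interchanges generating cups and caps exactly as $*$ interchanges $C^e$ with $(C^e)^*$. This exhibits $\ff_S$ as a strict $*$-pseudofunctor with the prescribed values on vertices, edges, and cups.

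For the uniqueness (``canonicity'') clause, let $\gg$ be any strict $*$-pseudofunctor with $\gg(a) = V^a$, $\gg(e) = E^e$ and $\gg(^e\cup^{\overline{e}}) = C^e$. Strictness (trivial tensorator) forces $\gg$ to coincide with $\ff_S$ on all paths; the $*$-condition forces $\gg(^e\cap^{\overline{e}}) = \gg(^e\cup^{\overline{e}})^* = (C^e)^*$; and then the generation property together with $\C$-linearity forces $\gg = \ff_S$ on all $2$-morphisms. I expect step (ii) --- pinning down precisely that planar isotopy of $\TLJ$-diagrams is generated by the interchange law together with the two zigzag moves --- to be the only genuinely delicate point; everything else is bookkeeping made routine by the strictness of $\mathcal C$ and $\TLJ$ and by the fact that the zigzag and $\delta_e$-loop relations are built into the very definition of a $\Gamma$-fundamental solution.
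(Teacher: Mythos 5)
Your construction is essentially the paper's own: the paper likewise fixes $\ff_S$ on vertices, edges and cups, slices an arbitrary $\TLJ$-diagram into horizontal strips each containing at most one cup, cap, single strand or empty square, defines $\ff_S(\alpha)$ as the vertical composite of the (horizontally tensored) images of these strips, and takes the tensorator $\mu$ and unit constraints $\iota$ to be identities, using strictness of $\mathcal C$ and $\TLJ$ throughout. You are in fact somewhat more explicit than the paper about well-definedness (independence of the slicing and isotopy invariance via the interchange law and the zigzag relations, the $\delta_e$-loop relation via relation (2)) and about the uniqueness clause, both of which the paper leaves largely implicit.
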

\begin{proof} Let $S$ be as described above. At the level of 0-morphisms, $\ff_S$ has been completely described in the statement. For 1-morphisms, notice that for any path $f = (f_i)_i$ in $\Gamma$, we can unambiguously define $\ff_S(f) = \ff_S(\otimes_i f_i) := \otimes_i \ff(f_i)$, since $\mathcal C$ is strict. Finally, every 2-morphism in $\TLJ$ is generated by a sum of adjointing and composing cups $\set{^e\cup^{\overline{e}}}$ horizontally and vertically along with single strand and empty diagrams, so we shall now use this property to show how to define the action $\ff_S$ on 2-morphisms. We define $\ff_S$ for an empty diagram $id_{id_{a}}$ trivially as $\ff_S(id_{id_a}) := id_{\ff_S(id_a)} := id_{id_{\ff_S(a)}} = id_{id_{V^a}}$. Similarly, we let $\ff_S$ act on diagrams with a single strand $id_e$ by $\ff_S(id_e) := id_{\ff_S(e)} = id_{E^e}$. Let $g$ be an arbitrary path in $\Gamma$ and $\alpha\in 2\TLJ(f\Rightarrow g)$ be an arbitrary Kauffmann diagram. By the strictness of TLJ$(\Gamma)$, we can freely rearrange parenthesis in either $g$ or $f$. Let us choose an arrangement of parenthesis for both so that if $\alpha$ contains a cup or a cap, then both of the edges in $g$ or $f$ involved in the domain/codomain of any given cup/cap appear associated. Moreover, if $\alpha$  contains nested cups or caps, by isotopy, we can ``vertically separate" them by stacking enough Kauffmann diagrams consisting of horizontal composition of vertical strings and/or (colored) vacua between any two nested cups or caps. We denote each horizontal strip in the resulting diagram by $_k\alpha := \bigotimes_{i_k = 1}^{T_k} {\,}_k\alpha_{i_k}$. Here, each $_k\alpha_{i_k}$ must then either be an empty diagram, a string, a cup or a cap.

We therefore expressed our Kauffmann diagram as $\alpha = \circ_{k = 1}^L {\,}_k\alpha,$ dictating a decomposition of $\alpha$ as a grid consisting of (colored) empty diagrams, single cups, single caps and vertical strands, where at most one of each is found inside each square. This is progressively depicted in the following example:

\begin{equation}
    \begin{tikzpicture}[scale=1/8 pt, thick, baseline={([yshift=-\the\dimexpr\fontdimen22\textfont2\relax] current bounding box.center)}]
        \draw[fill=black!20] (-12,-12) -- (-12,12) -- (12,12) -- (12, -12) -- cycle; 
        \draw[fill=black!60] (-8,-12) .. controls (-4,4) and (4,-4) .. (10,12) -- (-12,12) -- (-12,-12) -- cycle;
        \draw[fill=white] (-10,-12) .. controls (-4,6) and (4,-2) .. (8,12) -- (-12,12) -- (-12, -12) -- cycle;
    \begin{scope} 
        \clip (-12,-12) rectangle (12,12);
        \draw[fill=black!60] (7,-12) circle(3); 
        \draw[fill=black!60] (-2,-12) circle(4);
        \draw[fill=white] (-2,-12) circle(2);
        \draw[fill=black!60] (-4,12) circle(4);
        \draw[fill=black!20] (-4,12) circle(2);
    \end{scope}
        \draw (-12,-12) -- (-12,12) -- (12,12) -- (12, -12) -- cycle;
    \end{tikzpicture}
    =
    \begin{tikzpicture}[scale=1/8 pt, thick, baseline={([yshift=-\the\dimexpr\fontdimen22\textfont2\relax] current bounding box.center)}]
        \draw[fill=black!20] (-12,-12) -- (-12,12) -- (12,12) -- (12, -12) -- cycle; 
        \draw[fill=black!60] (-2,-12) -- (-2,12) -- (-12,12) -- (-12,-12) -- cycle;
        \draw[fill=white] (-4,-12) -- (-4,12) -- (-12,12) -- (-12, -12) -- cycle;
        \draw[fill=black!60] (1,-5.9) -- (1,-12) -- (5,-12) -- (5,-5.9);
        \draw[fill=white] (2,-9.9) -- (2,-12) -- (4,-12) -- (4,-9.9);
        \draw[fill=black!60] (-11,6) -- (-11,12) -- (-7,12) -- (-7,6);
        \draw[fill=black!20] (-10,9.9) -- (-10,12) -- (-8,12) -- (-8,9.9);
    \begin{scope} 
        \clip (-12,-12) rectangle (12,12);
        \draw[fill=black!60] (9,-12) circle(2); 
    \end{scope}
    \begin{scope}
        \clip (0,-6) rectangle (6,0);
        \draw[fill=black!60] (3,-6) circle(2);
    \end{scope}
    \begin{scope}
        \clip (0,-10) rectangle (6,-6);
        \draw[fill=white] (3,-10) circle(1);
    \end{scope}
    \begin{scope}
        \clip (-12,0) rectangle (-6,6);
        \draw[fill=black!60] (-9,6) circle(2);
    \end{scope}
    \begin{scope}
        \clip (-12,6) rectangle (-6,10);
        \draw[fill=black!20] (-9,10) circle(1);
    \end{scope}
        \draw[densely dashed] (0,-12) -- (0,12);
        \draw[densely dashed] (6,-12) -- (6,12);
        \draw[densely dashed] (-6,-12) -- (-6,12);
        \draw[densely dashed] (-12,0) -- (12,0);
        \draw[densely dashed] (-12,6) -- (12,6);
        \draw[densely dashed] (-12,-6) -- (12,-6);
        \draw (-12,-12) -- (-12,12) -- (12,12) -- (12, -12) -- cycle;
    \end{tikzpicture}
    =
    \begin{tikzpicture}[scale=1/8 pt, thick, baseline={([yshift=-\the\dimexpr\fontdimen22\textfont2\relax] current bounding box.center)}]
      \begin{scope} 
        \clip (-12,9) rectangle (24+18,15); 
        \draw[fill=black!60] (-9,15) -- (-9,9) -- (3,9) -- (3,15) -- cycle;
        \draw[fill=black!60] (9,15) -- (9,9) -- (15,9) -- (15,15) -- cycle;
        \draw[fill=black!20] (2+184,15) -- (24+18,9) -- (15,9) -- (15,15) -- cycle;
        \draw[fill=black!20] (-3,15) circle(2);
        \end{scope}
        \begin{scope} 
        \clip (-12,1) rectangle (24+18,7);
        \draw[fill=black!60] (-9 + 6,7) ellipse (6 and 4.5);
        \draw[fill=black!60] (9,7) -- (9,1) -- (15,1) -- (15,7) -- cycle;
        \draw[fill=black!20] (24+18,7) -- (24+18,1) -- (15,1) -- (15,7) -- cycle;
        \end{scope}
        \begin{scope} 
        \clip (-12,-1) rectangle (24+18,-7);
        \draw[fill=black!60] (3+12,-7) -- (3+12,-1) -- (-3+12,-1) -- (-3+12,-7) -- cycle;
        \draw[fill=black!20] (3+12,-7) -- (3+12,-1) -- (24+18,-1) -- (24+18,-7) -- cycle;
        \draw[fill=black!60] (9+12+6,-7) ellipse (6 and 4.5);
        \end{scope}
        \begin{scope} 
        \clip (-12,-9) rectangle (24+18,-15);
        \draw[fill=black!60] (-9+18,-15) -- (-9+18,-9) -- (-3+18,-9) -- (-3+18,-15) -- cycle;
        \draw[fill=black!20] (3+18,-15) -- (3+18,-9) -- (-3+18,-9) -- (-3+18,-15) -- cycle;
        \draw[fill=black!60] (3+18,-15) -- (3+18,-9) -- (24+18,-9) -- (24+18,-15) -- cycle;
        \draw[fill=white] (9+18,-15) circle(2);
        \draw[fill=black!20] (24+18,-15) -- (24+18,-9) -- (15+18,-9) -- (15+18,-15) -- cycle;
        \draw[fill=black!60] (21+18,-15) circle(2);
        \end{scope}
      \draw[densely dashed] (-6,-1) -- (-6,-7);
      \draw[densely dashed] (-6,9) -- (-6,15);
      \draw[densely dashed] (-6,-9) -- (-6,-15);
      \draw[densely dashed] (0,-1) -- (0,-7); 
      \draw[densely dashed] (0,9) -- (0,15);
      \draw[densely dashed] (0,-9) -- (0,-15);
      \draw[densely dashed] (6,1) -- (6,7); 
      \draw[densely dashed] (6,-1) -- (6,-7);
      \draw[densely dashed] (6,9) -- (6,15);
      \draw[densely dashed] (6,-9) -- (6,-15);
      \draw[densely dashed] (12,9) -- (12,15);  
      \draw[densely dashed] (12,1) -- (12,7);
      \draw[densely dashed] (12,-1) -- (12,-7);
      \draw[densely dashed] (12,-9) -- (12,-15);
      \draw[densely dashed] (18,9) -- (18,15);  
      \draw[densely dashed] (18,1) -- (18,7);
      \draw[densely dashed] (18,-1) -- (18,-7);
      \draw[densely dashed] (18,-9) -- (18,-15);
      \draw[densely dashed] (18 + 6,9) -- (18+ 6,15);  
      \draw[densely dashed] (18+ 6,1) -- (18+ 6,7);
      \draw[densely dashed] (18+ 6,-9) -- (18+ 6,-15);
      \draw[densely dashed] (18 + 6+ 6,9) -- (18+ 6+ 6,15);  
      \draw[densely dashed] (18+ 6+ 6,1) -- (18+ 6+ 6,7);
      \draw[densely dashed] (18+ 6+ 6,-9) -- (18+ 6+ 6,-15);
      \draw[densely dashed] (18 + 6+ 6+ 6,9) -- (18+ 6+ 6+ 6,15);  
      \draw[densely dashed] (18+ 6+ 6+ 6,1) -- (18+ 6+ 6+ 6,7);
      \draw[densely dashed] (18+ 6+ 6+ 6,-1) -- (18+ 6+ 6+ 6,-7);
      \draw[densely dashed] (18+ 6+ 6+ 6,-9) -- (18+ 6+ 6+ 6,-15);
      \draw (-12,1) rectangle (24+18,7); 
      \draw (-12,9) rectangle (24+18,15);
      \draw (-12,-1) rectangle (24+18,-7);
      \draw (-12,-9) rectangle (24+18,-15);
    \end{tikzpicture}
\end{equation}


\medskip
We can thereafter define $\ff_S(\alpha) := \circ_{k = 1}^L\ff_S(_k\alpha) := \circ_{k +1}^L\otimes_{i_k = 1}^{T_k}\ff_S(_k\alpha_{i_k}),$ which completely determines $\ff$ on 2-morphisms, as $S$ is a $\Gamma$-fundamental solution. The data of a pseudofunctor requires for each 0-morphism $a \in \TLJ$, an invertible 2-morphism $\iota_a: \ff_S(id_a) \Rightarrow id_{\ff_S(a)}$ and for each pair of composable 1-morphisms $f,g \in \TLJ$, a natural invertible 2-morphism $\mu_{f,g}: \ff_S(f) \otimes \ff_S(g) \Rightarrow \ff_S(f \otimes g)$. By taking $\iota$ and $\mu$ to be identities, $\ff_S$ trivially satisfies the conditions required in order to be a pseudofunctor, which are namely coherence axioms between $\iota$ and $\mu$. Although we do not state these conditions here, we refer the interested reader to \cite{NLab2018Pseudofunctor}.
\end{proof}
The following proposition will allow us to simplify our classification problem, asserting it is sufficient to understand \textit{strict} $*$-pseudofunctors between (strict) $*$-2-categories, as opposed to the larger family of \textit{strong} $*$-pseudofunctors. Before we state the next result, we need to introduce the notion of equivalence of pseudofunctors between 2-categories. To do so, we must \textit{go up one level}, considering the 3-category \textbf{2Cat}. (See \cite{Betti1988Complete2-Categories} for more details on the construction of this category.)

\begin{definition}\label{pseudofunctor equivalence}
Consider pseudofunctors $\ff,\gg: \mathcal{B}\rightarrow \mathcal{C},$ between 2-categories $\mathcal B$ and $\mathcal C$. We say that $\ff$ and $\gg$ are equivalent if and only if there exist pseudonatural isomorphisms $\theta : \ff \Rightarrow \gg$ and $\kappa :\gg\Rightarrow \ff$ and invertible modifications in \textbf{2Cat}, $M:\theta\circ\kappa \Rrightarrow  \ONE_{\gg}$ and $M': \kappa\circ\theta \Rrightarrow \ONE_{\ff}.$ In the case where $\mathcal B$ and $\mathcal C$ are $*$-2-categories, and $\ff$ and $\gg$ are $*$-pseudofunctors, we also require the 2-cells in $M$ and $M'$ be unitary, as well all the 2-morphisms $\theta_e,\kappa_e,$ for every 1-morphism $e$ in $\mathcal C.$ See \cite{Leinster1998BasicBicategories} for a more detailed overview of modifications. We also refer the reader to the entries on pseudonatural transformations \cite{NLab2018Modifications} and modifications \cite{NLab2018PseudonaturalTransformation} on \textbf{nLab}.
\end{definition}
\begin{proposition}\label{strictify}
Every strong $*$-pseudofunctor $(\ff,\mu)$ from $\TLJ$ into a strict $*$-2-category $\mathcal C$ is unitarily equivalent to the canonical strict $*$-pseudofunctor $\ff_S$ generated by the fundamental solution $S = (J, H, C)$ defined as follows: 
\begin{align*}
    J^a &:= \ff(a) \text{, for every } a \in V(\Gamma),\\
    \hh^e &:= \ff(e) \text{ and}\\
    C^e &:= \mu^{-1}_{e,\overline{e}} \, \circ \, \ff(^e \cup ^{\overline{e}}) \text{, for every } e \in E(\Gamma).
\end{align*}
\end{proposition}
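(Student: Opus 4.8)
The plan is to verify two things: first, that the triple $S = (J,H,C)$ defined in the statement is genuinely a $\Gamma$-fundamental solution in $\mathcal{C}$ in the sense of Definition \ref{fundamentalsolution}; and second, that the canonical strict $*$-pseudofunctor $\ff_S$ produced from $S$ via Proposition \ref{canonical} is unitarily equivalent to $(\ff,\mu)$ in the sense of Definition \ref{pseudofunctor equivalence}. For the first part, I would start from the zigzag relations that $\ff(^e\cup^{\overline{e}})$ satisfies \emph{inside the image} of $\ff$ (these hold because the cups in $\TLJ$ satisfy the corresponding relations and $\ff$ is a $*$-functor on each hom-category, hence preserves vertical composition, adjoints, and — up to the tensorator $\mu$ — horizontal composition). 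Then I would transport these relations across $\mu$: substituting $C^e = \mu^{-1}_{e,\overline e}\circ \ff(^e\cup^{\overline e})$ into relations (1) and (2), the naturality of $\mu$ and the coherence axioms (in particular the hexagon/pentagon-type coherence relating $\mu_{e,\overline e}\otimes \ONE$ with $\mu_{e\otimes\overline e, e}$, and the interplay with $\iota$) should let the $\mu$'s cancel telescopically, leaving exactly $\ONE_{E^e}$ in (1) and $\delta_e\cdot\ONE_{V^a}$ in (2). Relation (2) is easier since it only involves vertical composition and a single adjoint of $C^e$, where the $\mu^{-1}$ and its adjoint $\mu^* = \mu^{-1}$ (unitarity) cancel immediately and one is left with $\ff(^e\cup^{\overline e})^*\circ\ff(^e\cup^{\overline e}) = \ff\big((^e\cup^{\overline e})^*\circ(^e\cup^{\overline e})\big) = \ff(\delta_e\cdot\ONE_{id_a}) = \delta_e\cdot\ONE_{\ff(a)}$, using that $\ff$ is linear and $*$-preserving on 2-morphisms and $\iota$ is unitary.

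For the second part — the unitary equivalence $\ff_S \simeq (\ff,\mu)$ — I would build the pseudonatural isomorphism $\theta:\ff_S\Rightarrow\ff$ whose components are essentially identities on objects and 1-morphisms (both functors agree there by construction: $\ff_S(a) = J^a = \ff(a)$ and $\ff_S(e) = \hh^e = \ff(e)$), with the 2-cell data of $\theta$ on a general 1-morphism (path) $f = f_1\otimes\cdots\otimes f_n$ given by the iterated tensorator $\mu_{f_1,\ldots,f_n}: \ff(f_1)\otimes\cdots\otimes\ff(f_n)\Rightarrow \ff(f)$ (well-defined by the coherence theorem for pseudofunctors). Unitarity of these components is immediate from unitarity of each $\mu$. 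The content is then checking the pseudonaturality square for $\theta$ against an arbitrary 2-morphism $\alpha$ of $\TLJ$; by the grid decomposition $\alpha = \circ_k \bigotimes_{i_k}{\,}_k\alpha_{i_k}$ used in the proof of Proposition \ref{canonical}, it suffices to check this on the generating pieces: strings, empty/vacuum diagrams, and the cups $^e\cup^{\overline e}$ (caps follow by adjoint). For strings and vacua pseudonaturality is trivial; for a cup it reduces precisely to the defining equation $C^e = \mu^{-1}_{e,\overline e}\circ\ff(^e\cup^{\overline e})$, i.e. to commutativity of the square relating $\ff_S(^e\cup^{\overline e}) = C^e$, $\ff(^e\cup^{\overline e})$, and $\mu_{e,\overline e}$. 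Then I would take $\kappa := \theta^{-1}$ (componentwise inverse, which is also the adjoint by unitarity) and the modifications $M, M'$ to be identities, which are trivially unitary and satisfy the modification axioms since $\theta\circ\kappa = \ONE_{\ff}$ and $\kappa\circ\theta = \ONE_{\ff_S}$ on the nose.

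The main obstacle I anticipate is purely bookkeeping around the pseudofunctor coherence data: verifying that the iterated tensorators $\mu_{f_1,\ldots,f_n}$ are unambiguously defined (independent of bracketing) and that relation (1) survives the substitution requires carefully invoking the associativity-type coherence axiom for $\mu$ together with the unit axioms relating $\mu$ and $\iota$ — axioms that the excerpt has deliberately suppressed. I would handle this by citing the standard coherence theorem for pseudofunctors (so that one may, without loss of generality, compute as if all the bracketings and units were strict, tracking only the tensorators at genuinely distinct tensor factors) via \cite{NLab2018Pseudofunctor} and \cite{Leinster1998BasicBicategories}, thereby reducing relation (1) to the single coherence pentagon for $\mu_{e,\overline e}$ and $\mu_{e,\overline e\otimes e}$ versus $\mu_{\overline e, e}$ composed with $\ff$ applied to the zigzag identity in $\TLJ$. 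A secondary subtlety is confirming that $\ff$ being a $*$-pseudofunctor genuinely gives $\ff(^e\cup^{\overline e})^* = \ff((^e\cup^{\overline e})^*)$ with no $\mu$-correction — this holds because the involution in Definition of $*$-pseudofunctor is imposed compatibly with vertical adjoints and the tensorators are unitary, so the $*$-structure on 2-morphisms is transported faithfully; I would spell this out once at the start and then use it freely.
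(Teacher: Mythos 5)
Your proposal matches the paper's proof in all essentials: the paper likewise takes the components of the pseudonatural equivalence to be identities on objects and iterated tensorators $\eta_{e_1,\dots,e_N}$ on paths, proves unitarity from unitarity of $\mu$, reduces naturality to the generating 2-morphisms (vacua, strands, cups/caps) via the grid decomposition from Proposition \ref{canonical}, and finishes with $\kappa=\theta^{-1}$ and identity modifications. Your preliminary check that $C^e=\mu^{-1}_{e,\overline{e}}\circ\ff(^{e}\cup^{\overline{e}})$ genuinely satisfies the zigzag relations (by cancelling tensorators via naturality and coherence) is a sensible extra step that the paper leaves implicit, but it does not alter the approach.
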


\begin{proof}

We shall construct the natural transformations $\theta$ and $\kappa$ from Definition \ref{pseudofunctor equivalence} together with the forementioned modifications between them and the corresponding identities. For an arbitrary object $a\in \rm{TLJ} (\Gamma)$, we define $\theta_a := \ONE_{\ff(a)},$ and $\kappa_a := \ONE_{\ff_S(a)} = \ONE_{\ff(a)}.$ Let us now consider an arbitrary 1-morphism $e = \otimes_{i=1}^N e_i$ in $\TLJ$. By the strictness hypothesis on $\mathcal C$, and strictness in $\TLJ,$ there is no loss of generality by choosing any preferred parenthesization when expanding the path $e$ or its image under $\ff_S$. In the following computation, we chose the rightmost grouping, obtaining:

\begin{align*}
    \ff\left(\bigotimes_{i = 1}^N e_i\right) &= \ff\left( e_1 \otimes \bigotimes_{i = 2}^N e_i\right)\\
    &\cong \mu_{e_1,\bigotimes_2^Ne_i} \left(\ff(e_1) \otimes \ff\left(\bigotimes_{i = 2}^N e_i\right)\right)\\
    &\cong \mu_{e_1,\bigotimes_2^Ne_i}\left(\ff(e_1)\otimes \mu_{e_2,\bigotimes_3^Ne_i} \left(\ff(e_2)\otimes \ff\left(\bigotimes_{i = 3}^N e_i\right)\right)\right) \\
    &\, \, \, \vdots\\
    &\cong \mu_{e_1,\bigotimes_2^Ne_i}\left(\ff_S(e_1)\otimes \mu_{e_2,\bigotimes_3^Ne_i}  \left(\ff_S(e_2)\otimes \mu_{e_3,\bigotimes_4^Ne_i} \hdots \mu_{e_{N-1}, e_N}\left(\ff_S(e_{N-1})\otimes \ff_S(e_N)\right)\right)\right).
\end{align*}
From this computation, we obtain the family of 2-morphism described as follows:
\begin{align*}
    \eta_e &:= \eta_{e_1,e_2,\hdots,e_N} : \ff_S(e)\Rightarrow \ff (e),\ \rm{defined}\ \rm{by} \\
    \eta_{e_1,e_2,\hdots,e_N} &:= \mu_{e_1, \otimes_2^N e_i}\circ (\ONE_{\ff(e_1)}\otimes \mu_{e_2,\otimes_3^N e_i})\circ (\ONE_{\ff(e_1)}\otimes \ONE_{\ff(e_2)}\otimes \mu_{e_3,\otimes_4^N e_i})\circ\hdots \\
    &\hdots\circ (\ONE_{\ff(e_1)}\otimes \ONE_{\ff(e_2)}\otimes\hdots\otimes \ONE_{\ff(e_{N-3}}\otimes \mu_{e_{N-2},e_{N-1}\otimes e_N})\circ\\
    &\circ(\ONE_{\ff(e_1)}\otimes \ONE_{\ff(e_2)}\otimes\hdots\otimes \ONE_{\ff(e_{N-2})}\otimes\mu_{e_{N-1},e_N}).
\end{align*}
By defining $(\eta_a :\ff_S(a)\rightarrow \ff(a)) := (\ONE_{\ff(a)})$ for $a\in V(\Gamma),$ we obtain a pseudonatural transformation $\eta\in 2 \, \textbf{Cat}(\ff_S\Rightarrow\ff).$ We shall sketch a proof of this assertion in short. In addition, since all the components of $\eta$ are invertible, this defines a pseudonatural equivalence from $\ff_S$ to $\ff.$ Notice that all the $\eta_e$ are manifestly unitary --as we are simply tensoring and composing the unitary 2-morphism $\mu$ with identities-- and that $\eta_{e_1,\hdots e_N}$ is natural in each of the $e_i$ conforming the path $e$. We now provide a complete outline for verifying that $\eta$ is a pseudonatural equivalence. It needs to be shown that $\eta$ is monoidal (with respect to the composition of 1-morphisms), respects units, and is natural (on 2-morphisms). We shall proceed in that order: \medskip\\
To prove $\eta$ is monoidal with respect to the one composition, we need to verify that for arbitrary composable 1-morphisms $e,f$ in $\mathcal {C}$, the following equality holds:

\begin{equation} \label{monoidalityidentity}
\begin{tikzcd}[every arrow/.append style={shift left}, row sep=large]
\ff_S(a) \arrow{d}[left]{\ONE_{\ff(a)}} \arrow{r}{\ff_S(e)} & \ff_S(b) \arrow[Rightarrow]{dl}{\eta_e} \arrow{d}{\ONE_{\ff(b)}} \arrow{r}{\ff_S(f)} & \ff_S(c) \arrow[Rightarrow]{dl}{\eta_f} \arrow{d}{\ONE_{\ff(c)}}\\
\ff(a) \arrow{r}[below]{\ff(e)} \arrow[bend right=45, ""{name=P, below}]{rr}[below]{\ff(e \otimes f)} & \ff(b) \arrow[Rightarrow, to=P,"\mu^\ff_{e,f}", xshift=-0.6ex, yshift=0.2ex] \arrow{r}[below]{\ff(f)} & \ff(c)
\end{tikzcd}
=
\begin{tikzcd}[every arrow/.append style={shift left}]
&\ff_S(b) \arrow{dr}{\ff_S(f)} & \\
\ff_S(a) \arrow{d}[left]{\ONE_{\ff(a)}} \arrow{ur}{\ff_S(e)} \arrow{rr}[below, ""{name=U, above}]{\ff_S(e \otimes f)} \arrow[Leftarrow, from=U]{ur}{\ONE} & &\ff_S(c) \arrow[Rightarrow, yshift=0.7ex]{dll}{\eta_{e \otimes f}} \arrow{d}{\ONE_{\ff(c)}}\\
\ff(a) \arrow{rr}[below]{\ff(e \otimes f)} & & \ff(c) 
\end{tikzcd}
\end{equation}
However, this follows immediately from the graphical calculation: 

\begin{equation}
    \begin{tikzpicture}[scale = 2/5pt, baseline={([yshift=-\the\dimexpr\fontdimen22\textfont2\relax] current bounding box.center)}]
    \draw (-1,-5+7.2) -- (-1,-6.2+7.2);
    \draw (0 + 2.75,-5 +7.2) -- (0 + 2.75,-6.2+7.2);
    \draw (-0.3 + 2.75,-6.2+7.2) rectangle (0.3 + 2.75,-5.0+7.2);
    \draw (-1.3,-6.2+7.2) rectangle (-0.7,-5.0+7.2);
    \node at (0.85,-5.6+7.2) {\footnotesize$\Big) \otimes \ff_S\Big($};
    \node at (-2.2,-5.6+7.2) {\footnotesize$\ff_S\Big($};
    \node at (0.7 + 2.75 + 2,-5.6+7.2) {\footnotesize$\Big) \otimes \hdots \otimes \ff_S\Big($};
    \draw (-1.3 + 9.2,-6.2+7.2) rectangle (-0.7 + 9.2,-5.0+7.2);
    \draw (-1 + 9.2, -6.2+7.2) -- (-1 + 9.2, -5.0 + 7.2);
    \node at (-0.3 + 9.2,-5.6+7.2) {\footnotesize$\Big)$};
    \draw (2.75, -6.2+7.2 + 8) -- (2.75, -6.2+7.2 + 7);
    \draw (2.75, -6.2+7.2 + 7) -- (-1, -6.2+7.2 + 6);
    \draw (-1, -6.2+7.2 + 6) -- (-1, -5+7.2);
    \draw (2.75, -6.2+7.2 + 7) -- (2.75+3.75, -6.2+7.2 + 6);
    \draw (2.75+3.75, -6.2+7.2 + 6) --  (-1+3.75, -6.2+7.2 + 5);
    \draw (-1+3.75, -6.2+7.2 + 5) -- (-1+3.75, -5+7.2);
    \draw (-1 + 9.2, -6.2+7.2 + 5 -0.5) -- (-1 + 9.2, -5+7.2);
    \node at ( 7.35, 6.5) {\footnotesize$\ddots$};
    \draw (2.75+3.75, -6.2+7.2 + 6) -- (2.75+3.75 + 0.3, -6.2+7.2 + 6 - 0.2);
    \draw (-1 + 9.2, -6.2+7.2 + 5 -0.5) -- (-1 + 9.2 - 0.3, -6.2+7.2 + 5 -0.5 + 0.3);
    \end{tikzpicture}
    = \,\,\,\,\,\hdots\,\,\,\,\, =
    \begin{tikzpicture}[scale = 2/5pt, baseline={([yshift=-\the\dimexpr\fontdimen22\textfont2\relax] current bounding box.center)}]
    \draw (0 + 1 + 2.75,1) -- (0 + 1 + 2.75,0);
    \draw (0 + 1 + 2.75,0) -- (-1 + 1 + 2.75,-1);
    \draw (-1 + 1 + 2.75,-1) -- (-1 + 1 + 2.75,-2);
    \draw (0 + 1 + 2.75,0) -- (1 + 1 + 2.75,-1);
    \draw (0 + 1 + 2.75,-1) -- (0 + 1 + 2.75,-2);
    \draw (0 + 1 + 2.75,-1) -- (0.5 + 1 + 2.75,-0.5);
    \draw (3 + 1 + 2.75 - 1,-3 + 1) -- (4 + 1 + 2.75 - 1,-4 + 1);
    \draw (3 + 1 + 2.75 - 1,-4 + 1) -- (3 + 1 + 2.75 - 1,-5 + 1);
    \draw (3 + 1 + 2.75 - 1,-4 + 1) -- (3.5 + 1 + 2.75 - 1,-3.5 + 1);
    \draw (4 + 1 + 2.75 - 1,-4 + 1) -- (4 + 1 + 2.75 - 1,-5 + 1);
    \draw (-1 + 1 + 2.75,-3) -- (-1 + 1 + 2.75,-4);
    \draw (0 + 1 + 2.75,-3) -- (0 + 1 + 2.75,-4);
    \node at (-1 + 1 + 2.75,-2.3) {\footnotesize$\vdots$};
    \node at (0 + 1 + 2.75,-2.3) {\footnotesize$\vdots$};
    \node at (1.9 + 1 + 2.75 - 0.4,-1.75 + 0.5) {\footnotesize$\ddots$};
    \node at (1.9 + 1 + 2.75 - 0.4 -3.5 - 0.7,-1.75 + 0.5-1) {\footnotesize$\eta_{e_2,\hdots,e_N}$};
    \node at (1 + 1 + 2.75,-3.5) {\footnotesize$\hdots$};
    \draw (-1,-5+7.2) -- (-1,-6.2+7.2);
    \draw (0 + 2.75,-5 +7.2) -- (0 + 2.75,-6.2+7.2);
    \draw (-0.3 + 2.75,-6.2+7.2) rectangle (2.4 + 2.75,-5.0+7.2);
    \draw (-1.3,-6.2+7.2) rectangle (-0.7,-5.0+7.2);
    \node at (0.85,-5.6+7.2) {\footnotesize$\footnotesize \Big) \otimes \ff_S\Big($};
    \draw (2 + 2.75,-6.2+7.2) -- (2 + 2.75,-5.0+7.2);
    \node at (1 + 2.75,-5.6+7.2) {\footnotesize$\hdots$};
    \node at (-2.2,-5.6+7.2) {\footnotesize$\ff_S\Big($};
    \node at (2.8 + 2.75,-5.6+7.2) {\footnotesize$\Big)$};
    \draw (-1,1) -- (-1,-2);
    \node at (-1,-2.3) {\footnotesize$\vdots$};
    \draw (-1,-3) -- (-1,-4);
    \draw (-1,-5+7.2) -- (2.75/2,-5+7.2+1);
    \draw (2.75/2,-5+7.2+1) -- (1 + 2.75,-5+7.2);
    \draw (2.75/2,-5+7.2+1) -- (2.75/2,-5+7.2+1+1);
    \end{tikzpicture}
    =
    \begin{tikzpicture}[scale = 2/5pt, baseline={([yshift=-\the\dimexpr\fontdimen22\textfont2\relax] current bounding box.center)}]
    \draw (0 + 1,2.2) -- (0 + 1,3.2);
    \draw (0 + 1,1) -- (0 + 1,0);
    \draw (0 + 1,0) -- (-1 + 1,-1);
    \draw (-1 + 1,-1) -- (-1 + 1,-2);
    \draw (0 + 1,0) -- (1 + 1,-1);
    \draw (0 + 1,-1) -- (0 + 1,-2);
    \draw (0 + 1,-1) -- (0.5 + 1,-0.5);
    \draw (1 + 1,-1) -- (1.5 + 1,-1.5);
    \draw (3 + 1,-3) -- (2 + 1,-4);
    \draw (3 + 1,-3) -- (2.5 + 1,-2.5);
    \draw (3 + 1,-3) -- (4 + 1,-4);
    \draw (3 + 1,-4) -- (3 + 1,-5);
    \draw (3 + 1,-4) -- (3.5 + 1,-3.5);
    \draw (4 + 1,-4) -- (4 + 1,-5);
    \draw (-1 + 1,-3) -- (-1 + 1,-5);
    \draw (0 + 1,-3) -- (0 + 1,-5);
    \draw (2 + 1,-4) -- (2 + 1,-5);
    \node at (-1 + 1,-2.3) {\footnotesize$\vdots$};
    \node at (0 + 1,-2.3) {\footnotesize$\vdots$};
    \node at (1.9 + 1,-1.75) {\footnotesize$\ddots$};
    \node at (1 + 1,-4.5) {\footnotesize$\hdots$};
    \draw (0,-5 +7.2) -- (0,-6.2+7.2);
    \draw (-1,-5+7.2) -- (-1,-6.2+7.2);
    \draw (-1.3,-6.2+7.2) rectangle (4.4,-5.0+7.2);
    \draw (2,-6.2+7.2) -- (2,-5.0+7.2);
    \draw (3,-6.2+7.2) -- (3,-5.0+7.2);
    \draw (4,-6.2+7.2) -- (4,-5.0+7.2);
    \node at (1,-5.6+7.2) {\footnotesize$\hdots$};
    \node at (-2.2,-5.6+7.2) {\footnotesize$\ff_S\Big($};
    \node at (4.8,-5.6+7.2) {\footnotesize$\Big)$};
    \node at (-2.2 + 1 - 0.7,-2.3 - 1) {\footnotesize$\eta_{e_1,\hdots,e_N}$};
    \end{tikzpicture}
\end{equation}

\noindent That $\eta$ respects units follows easily, as if $g\in E(\Gamma),$ then $\eta_g = \ONE_{\ff(g)},$ is the tensor identity.

Finally, to see that $\eta$ is natural, let $g = \otimes_1^N g_i $ and $f = \otimes_1^M f_j $ be 1-morphisms in TLJ$(\Gamma)$ and $\alpha: f\Rightarrow g \in 2$TLJ$(\Gamma)(f\Rightarrow g)$ be an arbitrary $\TLJ$ diagram from $f$ to $g$. We shall verify the following identity holds:
\begin{equation} \label{naturalityidentity}
  \begin{tikzcd}[every arrow/.append style={shift left}, column sep=large, row sep=large]
    \ff_S(a) \arrow{r}[below, ""{name=Lower,inner sep=1pt}]{\ff_S(g)} \arrow[bend left=45,""{name=Upper,inner sep=1pt,below}]{r}{\ff_S(f)} \arrow[Rightarrow, from=Upper, to=Lower, "\ff_S(\alpha)"{left}, xshift=-0.2ex] \arrow{d}[left]{\ONE_{\ff(a)}} & \ff_S(b) \arrow[Rightarrow]{dl}[above,sloped]{\eta_g} \arrow{d}[right]{\ONE_{\ff(b)}} \\
   \ff(a) \arrow{r}[below]{\ff(g)} & \ff(b)
  \end{tikzcd}
=
  \begin{tikzcd}[every arrow/.append style={shift left}, column sep=large, row sep=large]
    \ff_S(a) \arrow{r}{\ff_S(f)} \arrow{d}[left]{\ONE_{\ff(a)}} & \ff_S(b) \arrow[Rightarrow,sloped]{dl}[above]{\eta_f} \arrow{d}[right]{\ONE_{\ff(b)}} \\
   \ff(a) \arrow{r}[above, ""{name=Upper,inner sep=1pt,below}]{\ff(f)} \arrow[bend right=45,""{name=Lower,inner sep=1pt}]{r}[below]{\ff(g)} \arrow[Rightarrow, from=Upper, to=Lower, "\ff(\alpha)"{left}, xshift=-0.2ex] & \ff(b) 
  \end{tikzcd}
\end{equation}
In order to do so, we decompose $\alpha$ as in the proof of Proposition \ref{canonical} obtaining $\alpha = \circ_{k=1}^L\otimes_{i_k = 1}^{T_k} {}_k\alpha_{i_k}.$ Recall that each ${}_k\alpha_{i_k}$ is either empty, a single string, or a single cup/cap. Since $\ff_S$ is a strict functor, proving the naturality of $\eta$ then reduces to show that equality \ref{naturalityidentity} holds for each of the $_k\alpha_{i_k}.$ For the (colored) empty or the single string diagram cases, equality holds trivially. In the cup/cap cases, equality follows from the naturality of the given ``tensorator" data from  $(\ff,\mu^\ff)$. 




We now  define $\theta$ and $\kappa$ simply as
$$\theta := \eta\ \rm{and}\ \kappa := \eta^{-1},$$
so automatically they are both unitary pseudonatural equivalences $\theta: \ff_S\Rightarrow \ff$ and $\kappa: \ff\Rightarrow \ff_S$. We are now ready to provide the data for the modifications $M:\kappa\circ\theta \Rrightarrow \ONE_{\ff_S}$ and $M': \theta\circ\kappa \Rrightarrow \ONE_{\ff}.$ For each object $a\in \rm{TLJ}(\Gamma),$ we observe that $(\kappa\circ\theta)_a = \ONE_{\ff(a)}\circ \ONE_{\ff(a)} = \ONE_{\ff(a)}$ and that $\ONE_{\ff}(a) = \ONE_{\ff(a)},$ so we define 
\begin{align*}
M_a &: (\kappa\circ\theta)_a \Rightarrow \ONE_{\ff(a)},\ by\\
M_a &:= \ONE_{\ONE_{\ff(a)}}.    
\end{align*}
Similarly we define
\begin{align*}
    M'_a &: (\theta\circ\kappa)_a \Rightarrow \ONE_{\ff_S(a)},\ by \\
    M'_a &:= \ONE_{\ONE_{\ff_S(a)}} = \ONE_{\ONE_{\ff(a)}}.
\end{align*}

It is routine to verify this data defines a modification. (We warn the reader we explicitly omitted all left and right unitors.) By observing these are all isomorphisms, we conclude $M$ and $M'$ describe the desired equivalence, thus completing the proof.
\end{proof}

\begin{proposition} \label{funda1}
 Let $(V,E,C)$ and $(W,I,D)$ be two $\Gamma$-fundamental solutions in $\ucat$, and let $\ff$ and $\gg$ be the unique unitary modules determined by each, respectively. Moreover, lets assume we have the following data:
 \begin{itemize}
     \item  for every $a\in V(\Gamma)$ we have that $\ff(a) = V^a = W^a = \gg(a),$ together with trivial 1-morphisms in $\ucat:$ $\theta_a = \kappa_a = \ONE_{V^a} ,$ and $M_a = M'_a = \ONE_{\ONE_{V^a}};$
     \item for every edge $e\in$ 1TLJ($\Gamma$)$(a\rightarrow b)$ two families of unitary 2-morphisms in $\ucat$: $\theta_e: I^e\Rightarrow E^e$ and $\kappa_e: E^e\Rightarrow I^e$ such that and $(\kappa_e)^* = \theta_e$, together with the conditions $\theta_{\ONE_{V^a}} = \kappa_{\ONE_{V^a}} = \ONE_{\ONE_{V^a}}$. (We remark that  $\ONE_{\ONE_{V^a}}$ also acts as the tensor unit.)
 \end{itemize}
 
 Then $\ff$ and $\gg$ are equivalent via the pseudonatural isomorphisms $\theta: \ff\Rightarrow \gg$ and $\kappa:\gg\Rightarrow \ff$ and the invertible modification $M':(\kappa\circ\theta)\Rrightarrow \ONE_{\ff}$
 if and only if for each 
 edge $e\in$ 1TLJ($\Gamma$)$(a\rightarrow b)$ we have the unitary 2-morphisms in $\ucat$ $\theta_e$, $\kappa_e$  satisfying $(\theta_e \otimes \theta_{\bar{e}})\circ D^e = C^ e$.
This is represented diagrammatically as follows:\medskip
\begin{equation}
\begin{tikzpicture}[scale=0.75, every node/.style={scale=0.75}, semithick,baseline={([yshift=-\the\dimexpr\fontdimen22\textfont2\relax] current bounding box.center)}]
  \draw (-1,0) .. controls (-1,-0.555) and (-0.555,-1) .. (0,-1)
               .. controls (0.555,-1) and (1,-0.555) .. (1,0);
  \draw (-1,0) -- (-1,0.8);
  \draw (1,0) -- (1,0.8);
  \draw (0.1,-1.2) -- (-0.1,-1) -- (0.1, -0.8);
  \node at (0,-1.4) {\large$C^e$};
  \node at (-1,1.1) {\large$E^e$};
  \node at (1,1.1) {\large$E^{\overline{e}}$};
\end{tikzpicture}
= \,\,\,
\begin{tikzpicture}[scale=0.75, every node/.style={scale=0.75}, semithick,baseline={([yshift=-\the\dimexpr\fontdimen22\textfont2\relax] current bounding box.center)}]
  \draw (-1,0) .. controls (-1,-0.555) and (-0.555,-1) .. (0,-1)
               .. controls (0.555,-1) and (1,-0.555) .. (1,0);
  \draw (-1.35,0) rectangle (-0.65,0.6);
  \draw (1.45,0) rectangle (0.65,0.6);
  \draw (-1,0.6) -- (-1,0.8);
  \draw (1,0.6) -- (1,0.8);
  \draw (0.1,-1.2) -- (-0.1,-1) -- (0.1, -0.8);
  \node at (0,-1.4) {\large$D^e$};
  \node at (-1, 0.3) {\large$\theta_e$};
  \node at (1, 0.3) {\large$\theta_{\overline{e}}$};
  \node at (-1,1.1) {\large$E^e$};
  \node at (1,1.1) {\large$E^{\overline{e}}$};
\end{tikzpicture}
\end{equation}
\end{proposition}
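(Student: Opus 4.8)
The plan is to strip away the $3$-categorical coherence data, which here is forced to be trivial, and reduce the claimed equivalence to a single identity on the cup generators of $\TLJ$. By Proposition~\ref{canonical}, the modules $\ff$ and $\gg$ are the \emph{canonical strict} $*$-pseudofunctors attached to $(V,E,C)$ and $(W,I,D)$; in particular $\ff(a)=V^a$, $\ff(e)=E^e$, $\ff({}^e\cup^{\overline{e}})=C^e$, all unitors and tensorators of $\ff$ (and of $\gg$, with $W,I,D$) are identities, and $\ucat$ itself is strict (Remark~\ref{UCat is strict}). Given the edge data $\{\theta_e\}$, I would extend it to a candidate pseudonatural transformation $\theta\colon\ff\Rightarrow\gg$ by $\theta_a:=\ONE_{V^a}$ and $\theta_{f_1\otimes\cdots\otimes f_N}:=\theta_{f_1}\otimes\cdots\otimes\theta_{f_N}$ (unambiguous by strictness), with all compositor and unit $2$-cells taken to be identities; each $\theta_f$ is a horizontal composite of unitaries, hence unitary, so one sets $\kappa_f:=(\theta_f)^{*}=(\theta_f)^{-1}$, consistently with the hypothesis $(\kappa_e)^{*}=\theta_e$. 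Because every coherence cell is an identity, the assertion ``$\theta$ is a pseudonatural isomorphism'' collapses to the single naturality equation $\theta_g\circ\gg(\alpha)=\ff(\alpha)\circ\theta_f$ for all $2$-morphisms $\alpha\colon f\Rightarrow g$ in $\TLJ$; and once that is established, taking $\kappa:=\theta^{-1}$ together with the modifications $M'\colon\kappa\circ\theta\Rrightarrow\ONE_{\ff}$ and $M\colon\theta\circ\kappa\Rrightarrow\ONE_{\gg}$ with components $\ONE_{\ONE_{V^a}}$ (as in the statement) exhibits an equivalence, since $(\kappa\circ\theta)_f=(\theta_f)^{-1}\circ\theta_f=\ONE=(\ONE_{\ff})_f$ and the modification axioms reduce to trivialities.

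For the forward implication, I would feed the cup ${}^e\cup^{\overline{e}}\colon\ONE_{V^a}\Rightarrow e\otimes\overline{e}$ into the naturality equation for $\theta$. By the composition-coherence axiom of a pseudonatural transformation, together with strictness of $\ff,\gg$ and the interchange law, $\theta_{e\otimes\overline{e}}=\theta_e\otimes\theta_{\overline{e}}$; and since $\theta_{\ONE_{V^a}}=\ONE_{\ONE_{V^a}}$, $\gg({}^e\cup^{\overline{e}})=D^e$, and $\ff({}^e\cup^{\overline{e}})=C^e$, the naturality square degenerates to $(\theta_e\otimes\theta_{\overline{e}})\circ D^e=C^e$, which is exactly the displayed diagrammatic identity.

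For the converse, assume $(\theta_e\otimes\theta_{\overline{e}})\circ D^e=C^e$ for every $e\in E(\Gamma)$; one must verify the naturality equation for an arbitrary $\alpha\colon f\Rightarrow g$. Here I would invoke the grid decomposition from the proof of Proposition~\ref{canonical}: write $\alpha=\circ_{k=1}^{L}\bigotimes_{i_k=1}^{T_k}{}_k\alpha_{i_k}$ with each ${}_k\alpha_{i_k}$ an empty diagram, a single strand, a single cup, or a single cap. Since $\ff$ and $\gg$ are strict $*$-functors and $\theta$ was built compatibly with $\otimes$ (strictly) and with $\circ$, naturality for $\alpha$ follows from naturality on these four generators. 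The empty-diagram and single-strand cases are immediate (both composites are identities); the cup case is the hypothesis; and for the cap ${}^e\cap^{\overline{e}}$, which is the $*$-adjoint in $\TLJ$ of the corresponding cup, I would apply the involution to the cup identity, using $\ff\big(({}^e\cup^{\overline{e}})^{*}\big)=\ff({}^e\cup^{\overline{e}})^{*}=(C^e)^{*}$ (and the analogue for $\gg$), $\kappa_e=(\theta_e)^{*}$, and $\kappa_e\circ\theta_e=\ONE$, to obtain $(D^e)^{*}=(C^e)^{*}\circ(\theta_e\otimes\theta_{\overline{e}})$, which is precisely the naturality square on the cap. Then $\theta$ is a pseudonatural isomorphism, and the construction of $\kappa,M,M'$ from the first paragraph completes the equivalence.

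I expect the main obstacle to be bookkeeping rather than conceptual: one must make genuinely precise that the compositors and units of $\theta$ and $\kappa$, the $2$-cells of $M$ and $M'$, and all left/right unitors are forced to be identities, so that the $3$-categorical equivalence of Definition~\ref{pseudofunctor equivalence} really does reduce to a single naturality equation — this is where strictness of $\ucat$ and the canonical form of $\ff,\gg$ from Proposition~\ref{canonical} do all the heavy lifting. The second delicate point is pinning down the handedness of the cap as the adjoint of a cup in $\TLJ$ and checking that reflecting the cup identity lands on the correct naturality square; this is exactly why the hypothesis $(\kappa_e)^{*}=\theta_e$ is imposed. Everything else is formal manipulation of unitary $2$-morphisms, entirely parallel to the argument already carried out in Proposition~\ref{strictify}.
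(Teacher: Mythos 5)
Your proposal is correct and follows essentially the same route as the paper: extend the edge data to paths by tensoring (with all coherence cells forced to be identities by strictness), reduce pseudonaturality to the generators via the grid decomposition of Proposition~\ref{canonical}, verify cups from the hypothesis and caps by taking adjoints using unitarity, and obtain the forward implication by evaluating the pseudonaturality data on the cup generator. The only cosmetic difference is that your forward direction applies the naturality and composition axioms of $\theta$ directly, whereas the paper first runs a longer graphical conjugation argument through $\kappa$, $M'$, and the zigzag relation before specializing to the trivial components; the underlying mechanism is the same.
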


\begin{proof}
 We begin by proving the forward direction. Consider pseudonatural transformations $\theta$ and $\kappa$ and the modification $M'$ as in the statement. For each $e\in$ 1TLJ($\Gamma$)$(a\rightarrow b),$ we define a two-morphism in $\ucat$ by $H^e := (M'_b)\circ(\theta_e\otimes \kappa_{\bar{e}}^{*}),$ which is manifestly unitary. We then obtain the following chain of equations which are heavily guided by the graphical calculus in $\ucat$:
 \begin{align*}
     H^e\circ (\ONE_{\theta_a}\otimes D^e\otimes \ONE_{\kappa_a})\circ (H^{\ONE_a})^* 
     &= H^e\circ(\ONE_{\theta_a}\otimes \kappa_{e\otimes \bar{e}})\circ(\ONE_{\theta_a}\otimes \ONE_{\kappa_a}\otimes C^{e})\circ({\theta_{\ONE_a}}^*\otimes \ONE_{\kappa_a})\circ (M'_a)^* \\
     &= (\ONE_{E^e} \otimes M'_b \otimes \ONE_{E^{\bar{e}}})\circ(\theta_e\otimes \ONE_{\kappa_b}\otimes \ONE_{E^{\bar{e}}}) \circ(\ONE_{\theta_a}\otimes\kappa_e\otimes \ONE_{E^{\bar{e}}})\circ\\
     &\ \ \ \ \ \circ(\ONE_{\theta_a}\otimes \ONE_{\kappa_a} \otimes C^e)\circ({\theta_{\ONE_a}}^*\otimes \ONE_{\kappa_a})\circ (M'_a)^*\\
     &= (M'_a\otimes \ONE_{E^e}\otimes \ONE_{E^{\bar{e}}})\circ (\ONE_{\theta_a}\otimes \ONE_{\kappa_a}\otimes C^e)\circ({\theta_{\ONE_a}}^*\otimes \ONE_{\kappa_a})\circ (M'_a)^* \\
     &= C^e,
 \end{align*}
 showing that $C^e$ and $D^e$ are unitary conjugates. Now, since we are assuming that $\theta_a = \ONE_{V^a} = \kappa_a$ and $M'_a = \ONE_{\ONE_{V^a}},$ the unitary $H^e$ can explicitly be expressed as $H^e = \theta_e\otimes {\kappa_{\bar{e}}}^*,$ giving the desired relation.
\begin{equation}
     \begin{tikzpicture} [scale=1/3 pt, thick, baseline={([yshift=-\the\dimexpr\fontdimen22\textfont2\relax] current bounding box.center)}]
       \tikzstyle{block} = [rectangle, draw=black, very thick, text centered,rounded corners, minimum height=2em];
       \matrix [matrix of math nodes,
        column sep={1.5cm,between origins},
        row sep={1.25cm,between origins},
        nodes={block, draw, minimum size=6.5mm}]
        {
                                      & |(Mb)| {M}_b'             & \\
        |(Te)| \theta_e               &                        & |(kei)| \kappa_{\overline{e}}^{-1} \\
                                      & |(De)| D^e             & \\
        |(Tida)| \theta_{\ONE_a}^{-1}   &                        & |(Kida)| \kappa_{\ONE_a}\\
                                      & |(Ma)| {M'}_{a}^{-1}   & \\
        };
       \draw (De) to (Te)
             (De) to (kei)
             (Te) to (Tida)
             (kei) to (Kida)
             (Tida) to (Ma)
             (Kida) to (Ma)
             (Te) to (Mb)
             (kei) to (Mb);
        \node (triv)    [below of=De]  {};
        \node (trash1)  [above of=Te]  {};
        \node (trash2)  [above of=kei] {};
        \node (2trash1) [above of=trash1]  {};
        \node (2trash2) [above of=trash2] {};
        \node (3trash1) [above = 0.5cm of 2trash1] {};
        \node (3trash2) [above = 0.5cm of 2trash2] {};
        \node (up1) [above = 0.25cm of 3trash1]  {};
        \node (up2) [above = 0.25cm of 3trash2]  {};
        \node (triv3) [below = 0.5cm of Ma] {};
        \node (dtrash1) [below of=Tida] {};
        \node (dtrash2) [below of=Kida] {};
        \node (2dtrash1) [below of=dtrash1] {};
        \node (2dtrash2) [below of=dtrash2] {};
        \node (3dtrash1) [below = 0.5cm of 2dtrash1] {};
        \node (3dtrash2) [below = 0.5cm of 2dtrash2] {};
        \node (down1) [below = 0.25cm of 3dtrash1] {};
        \node (down2) [below = 0.25cm of 3dtrash2] {};
        \node (triv2) [above = 0.5cm of Mb] {};
        \draw[dashed] (Tida) to (triv)
                      (Kida) to (triv)
                      (triv) to (De);
        \draw[dashed] (Mb) to (triv2)
                      (triv2) to (3trash1)
                      (triv2) to (3trash2);
        \draw (Te) to (up1)
              (kei) to (up2);
        \draw[dashed] (Ma) to (triv3)
                      (triv3) to (3dtrash1)
                      (triv3) to (3dtrash2);
        \draw[dashed] (Tida) to (down1)
              (Kida) to (down2);
        \draw[densely dashed,blue,rounded corners] (-2,-6) rectangle (7,6);
     \end{tikzpicture}
     \,\,\,=
     \begin{tikzpicture} [scale=1/3 pt, thick, baseline={([yshift=-\the\dimexpr\fontdimen22\textfont2\relax] current bounding box.center)}]
       \tikzstyle{block} = [rectangle, draw=black, very thick, text centered,rounded corners, minimum height=2em];
       \matrix [matrix of math nodes,
        column sep={1.5cm,between origins},
        row sep={1.25cm,between origins},
        nodes={block, draw, minimum size=6.5mm}]
        {
                                      & |(Mb)| {M}_b'             & \\
        |(Te)| \theta_e               &                        & |(kei)| \kappa_{\overline{e}}^{-1} \\
                                      & |(De)| \kappa_{e \otimes \overline{e}}             & \\
        |(Tida)| \theta_{\ONE_a}^{-1}   &                        & |(Kida)| C^e\\
                                      & |(Ma)| {M'}_{a}^{-1}   & \\
        };
       \draw (De) to (Te)
             (Te) to (Tida)
             (Tida) to (Ma)
             (Te) to (Mb)
             (kei) to (Mb);
        \draw[double distance=2pt] (De) to (kei);
        \node (triv)    [below of=De]  {};
        \node (trash1)  [above of=Te]  {};
        \node (trash2)  [above of=kei] {};
        \node (2trash1) [above of=trash1]  {};
        \node (2trash2) [above of=trash2] {};
        \node (3trash1) [above = 0.5cm of 2trash1] {};
        \node (3trash2) [above = 0.5cm of 2trash2] {};
        \node (up1) [above = 0.25cm of 3trash1]  {};
        \node (up2) [above = 0.25cm of 3trash2]  {};
        \node (triv3) [below = 0.5cm of Ma] {};
        \node (dtrash1) [below of=Tida] {};
        \node (dtrash2) [below of=Kida] {};
        \node (2dtrash1) [below of=dtrash1] {};
        \node (2dtrash2) [below of=dtrash2] {};
        \node (3dtrash1) [below = 0.5cm of 2dtrash1] {};
        \node (3dtrash2) [below = 0.5cm of 2dtrash2] {};
        \node (down1) [below = 0.25cm of 3dtrash1] {};
        \node (down2) [below = 0.25cm of 3dtrash2] {};
        \node (triv2) [above = 0.5cm of Mb] {};
        \draw[dashed] (Tida) to (triv);
        \draw[dashed] (Mb) to (triv2)
                      (triv2) to (3trash1)
                      (triv2) to (3trash2);
        \draw (Te) to (up1)
              (kei) to (up2);
        \draw[dashed] (Ma) to (triv3)
                      (triv3) to (3dtrash1)
                      (triv3) to (3dtrash2);
        \draw [dashed] (Tida) to (down1)
                       (Kida) to (down2);
        \draw (De) to (Ma);
        \draw [double distance=2pt] (De) to (Kida);
        \draw[densely dashed,blue,rounded corners] (-2,-2) rectangle (7,6);
     \end{tikzpicture}
     \,\,\,=\,\,\,
     \begin{tikzpicture} [scale=1/3 pt, thick, baseline={([yshift=-\the\dimexpr\fontdimen22\textfont2\relax] current bounding box.center)}]
       \tikzstyle{block} = [rectangle, draw=black, very thick, text centered,rounded corners, minimum height=2em];
       \matrix [matrix of math nodes,
        column sep={1.5cm,between origins},
        row sep={1.25cm,between origins},
        nodes={block, draw, minimum size=6.5mm}]
        {
                                      & |(Mb)| {M}_b'             & \\
        |(Te)| \theta_e               &                        & \\
                                      & |(De)| \kappa_{e}             & \\
        |(Tida)| \theta_{\ONE_a}^{-1}   &                        & |(Kida)| C^e\\
                                      & |(Ma)| {M'}_{a}^{-1}   & \\
        };
       \draw (De) to (Te)
             (Te) to (Tida)
             (Tida) to (Ma)
             (Te) to (Mb)
             (De) to (Mb);
        \node (megatrash) [above of=Kida] {};
        \node (megatrash2) [above of=megatrash] {};
        \node (kei) [above of=megatrash2] {};
        \node (triv)    [below of=De]  {};
        \node (trash1)  [above of=Te]  {};
        \node (trash2)  [above of=kei] {};
        \node (2trash1) [above of=trash1]  {};
        \node (2trash2) [above = 0.50cm of trash2] {};
        \node (3trash1) [above = 0.5cm of 2trash1] {};
        \node (3trash2) [above = 0.25cm of 2trash2] {};
        \node (up1) [above = 0.25cm of 3trash1]  {};
        \node (up2) [above = 0.25cm of 3trash2]  {};
        \node (triv3) [below = 0.5cm of Ma] {};
        \node (dtrash1) [below of=Tida] {};
        \node (dtrash2) [below of=Kida] {};
        \node (2dtrash1) [below of=dtrash1] {};
        \node (2dtrash2) [below of=dtrash2] {};
        \node (3dtrash1) [below = 0.5cm of 2dtrash1] {};
        \node (3dtrash2) [below = 0.5cm of 2dtrash2] {};
        \node (down1) [below = 0.25cm of 3dtrash1] {};
        \node (down2) [below = 0.25cm of 3dtrash2] {};
        \node (triv2) [above = 0.5cm of Mb] {};
        \draw[dashed] (Tida) to (triv);
        \draw[dashed] (Mb) to (triv2)
                      (triv2) to (3trash1)
                      (triv2) to (3trash2);
        \draw (Te) to (up1)
              (Kida) to (up2);
        \draw[dashed] (Ma) to (triv3)
                      (triv3) to (3dtrash1)
                      (triv3) to (3dtrash2);
        \draw [dashed] (Tida) to (down1)
                       (Kida) to (down2);
        \draw (De) to (Ma);
        \draw (De) to (Kida);
        \draw[densely dashed,blue,rounded corners] (-6,-2) rectangle (2,10);
     \end{tikzpicture}
     \end{equation}
     \begin{equation*}
     = \,\,\,\,\,
     \begin{tikzpicture} [scale=1/3 pt, thick, baseline={([yshift=-\the\dimexpr\fontdimen22\textfont2\relax] current bounding box.center)}]
       \tikzstyle{block} = [rectangle, draw=black, very thick, text centered,rounded corners, minimum height=2em];
       \matrix [matrix of math nodes,
        column sep={1.5cm,between origins},
        row sep={1.25cm,between origins},
        nodes={block, draw, minimum size=6.5mm}]
        {
                                      &                        & \\
                                      &                        & \\
                                      & |(De)| M_a'     & \\
        |(Tida)| \theta_{\ONE_a}^{-1}   &                        & |(Kida)| C^e\\
                                      & |(Ma)| {M'}_{a}^{-1}   & \\
        };
       \draw (Tida) to (Ma);
        \node (megatrash) [above of=Kida] {};
        \node (megatrash2) [above = 0.5cm of megatrash] {};
        \node (kei) [above = 0.25cm of megatrash2] {};
        \node (triv)    [below of=De]  {};
        \node (trash0)  [above of=Tida]  {};
        \node (trash1)  [above of=trash0] {};
        \node (trash2)  [above = 0.5cm of kei] {}; 
        \node (triv3) [below = 0.5cm of Ma] {};
        \node (dtrash1) [below of=Tida] {};
        \node (dtrash2) [below of=Kida] {};
        \node (2dtrash1) [below of=dtrash1] {};
        \node (2dtrash2) [below of=dtrash2] {};
        \node (3dtrash1) [below = 0.25cm of 2dtrash1] {};
        \node (3dtrash2) [below = 0.25cm of 2dtrash2] {};
        \node (down1) [below = 0.5cm of 3dtrash1] {};
        \node (down2) [below = 0.5cm of 3dtrash2] {};
        \draw[dashed] (Tida) to (triv);
                      (triv2) to (3trash1)
                      (triv2) to (3trash2);
              (Kida) to (up2);
        \draw[dashed] (Ma) to (triv3)
                      (triv3) to (3dtrash1)
                      (triv3) to (3dtrash2);
        \draw [dashed] (Tida) to (down1)
                       (Kida) to (down2);
        \draw (De) to (Ma);
        \draw (Tida) to (De);
        \draw[double distance=4pt] (Kida) to (trash2);
        \draw[dashed] (De) to (kei);
        \draw[densely dashed,blue,rounded corners] (-6,-9) rectangle (3,3);
     \end{tikzpicture}
     \,\,\,\,\, =
     \begin{tikzpicture} [scale=1/3 pt, thick, baseline={([yshift=-\the\dimexpr\fontdimen22\textfont2\relax] current bounding box.center)}]
       \tikzstyle{block} = [rectangle, draw=black, very thick, text centered,rounded corners, minimum height=2em];
       \matrix [matrix of math nodes,
        column sep={1.5cm,between origins},
        row sep={1.25cm,between origins},
        nodes={block, draw, minimum size=6.5mm}]
        {
                                      &                        & \\
                                      &                        & \\
                                      &                        & \\
                                      &                        & |(Kida)| C^e\\
                                      &                        & \\
        };
        \node (megatrash) [above of=Kida] {};
        \node (megatrash2) [above of=megatrash] {};
        \node (kei) [above = 0.25cm of megatrash2] {};
        \node (trash2)  [above = 0.25cm of kei] {}; 
        \node (triv3) [below of=Ma] {};
        \node (dtrash2) [below of=Kida] {};
        \node (dtrash1) [left of=Ma] {};
        \node (2dtrash2) [below of=dtrash1] {};
        \node (2dtrash2) [below of=dtrash2] {};
        \node (3dtrash1) [below = 0.5cm of 2dtrash1] {};
        \node (3dtrash2) [below = 0.5cm of 2dtrash2] {};
        \node (down1) [below = 0.25cm of 3dtrash1] {};
        \node (down2) [below = 0.25cm of 3dtrash2] {};
        \draw [dashed] (3dtrash1) to (down1)
                       (3dtrash1) to (2dtrash2)
                       (Kida) to (down2);
        \draw[double distance=4pt] (Kida) to (trash2);
     \end{tikzpicture}
\end{equation*}

 Conversely, we shall construct the necessary pseudonatural transformations and modifications from the given data. First, if $e=\otimes e_i$ is a path in $\Gamma,$ we can regard $e$ as a 1-morphism in TLJ$(\Gamma),$ as a reduced word; i.e. containing no identities to suppress via the left or right unitors. We can then define $\theta_e := \otimes \theta_{e_i},$ extending to every 1-morphism in TLJ$(\Gamma)$. We proceed similarly with $\kappa,$ obtaining a unitary 2-morphism in $\ucat$ for every 1-morphism $e$ in TLJ$(\Gamma).$ We shall now verify that $\theta$ is a pseudonatural isomorphism. Since by definition it respects units and is monoidal with respect to 1-composition, it remains to see it is natural in 2-morphisms. However, to see this it suffices to check naturality for single cups as single strands, as we did in the proof of the previous proposition. Thus, that $\theta$ and $\kappa$ are natural follows from simple computation, using that $(\theta_e \otimes \theta_{\bar{e}})\circ D^e = C^ e$.
 
 Finally, that $M'$ defines a modification $M': (\kappa\circ\theta) \Rrightarrow \ONE_{\ff},$ follows directly from the hypothesis $(\kappa_e)^* = \theta_e$ for every edge in $\Gamma,$ as it directly translates to the commuting two-cell in the definition of a modification. This completes the proof.
\end{proof} 

Since $\ucat$ is $*$-2-equivalent to $\hilb$, classifying $\Gamma$-fundamental solutions in $\hilb$ --under the hypotheses of the previous proposition-- is equivalent to classifying $*$-pseudofunctors $\TLJ\rightarrow \ucat$. We then get the following corollary from the equivalence of categories described in Proposition \ref{catequivalence} and from Proposition \ref{funda1}:
\begin{corollary} \label{funda2} 

Consider two $\Gamma$-fundamental solutions $S =(J,H,C)$ and $T = (J, \tilde{H},\tilde{C})$ in $\hilb$ such that for each $a\in V(\Gamma),$ we have $\hh^{\ONE_a} = \tilde{\hh}^{\ONE_a},$ and $C^{\ONE_a} = \tilde{C}^{\ONE_a}.$ Furthermore, pushing forward these solutions using the equivalence $\Theta$ introduced in Proposition \ref{catequivalence}, we obtain $\Gamma$-fundamental solutions $\Theta[S] := (\Theta[J], \Theta[\hh], \Theta[C])$ and $\Theta[T] := (\Theta[J], \Theta [\tilde{\hh}], \Theta[\tilde{C}])$ in $\ucat$, defining corresponding strict $*$-pseudofunctors in $ \ucat,$ denoted $\ff$ and $\gg$. 

Then $\ff$ and $\gg$ are unitarily equivalent modules if and only if  for each edge $e\in E(\Gamma)$, there exists unitary  isomorphisms  $U^e\in 2\hilb(\tilde{\hh^e}\Rightarrow \hh^e)$ such that
$$C^e = ( U^e \otimes U^{\overline{e}} ) \circ \tilde{C}^e.$$
\end{corollary}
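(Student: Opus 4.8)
The plan is to transport Proposition~\ref{funda1} --- which characterises unitary equivalence of the two modules attached to a pair of $\Gamma$-fundamental solutions in $\ucat$ --- across the unitary $*$-$2$-equivalence $\Theta\colon\hilb\to\ucat$ of Proposition~\ref{catequivalence}. The push-forward is unavoidable because $\ucat$ is strict (Remark~\ref{UCat is strict}) whereas $\hilb$ is not, so Propositions~\ref{canonical} and~\ref{funda1}, which assume strictness, cannot be applied to $S$ and $T$ directly. After pushing forward, $\Theta[S]=(\Theta[J],\Theta[H],\Theta[C])$ and $\Theta[T]=(\Theta[J],\Theta[\tilde H],\Theta[\tilde C])$ are $\Gamma$-fundamental solutions in the strict $*$-$2$-category $\ucat$, and by Proposition~\ref{canonical} they determine the canonical strict $*$-pseudofunctors $\ff$ and $\gg$ named in the statement.

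First I would verify that the hypotheses of Proposition~\ref{funda1} hold for the pair $\big(\Theta[S],\Theta[T]\big)$. Since $S$ and $T$ share the grading family $J=\{J^a\}_{a\in V(\Gamma)}$, both solutions have the same $0$-morphisms $V^a=\Theta[J^a]=W^a$, so we may set $\theta_a=\kappa_a=\ONE_{\Theta[J^a]}$ and take the vertex modifications to be identities. The remaining requirement --- that $\theta$ and $\kappa$ be trivial on the empty paths $\ONE_a$ --- is exactly what the standing assumptions $\hh^{\ONE_a}=\tilde\hh^{\ONE_a}$ and $C^{\ONE_a}=\tilde C^{\ONE_a}$ buy us: after applying $\Theta$ they say that $\ff$ and $\gg$ literally agree on the units of $\TLJ$ and on their (trivial) cups. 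With these choices Proposition~\ref{funda1} applies and yields: $\ff\simeq\gg$ as unitary modules if and only if for every edge $e\colon a\to b$ there is a unitary $2$-morphism $\theta_e\colon\Theta[\tilde\hh^e]\Rightarrow\Theta[\hh^e]$ in $\ucat$ (with $\kappa_e=\theta_e^{*}=\theta_e^{-1}$) such that $(\theta_e\otimes\theta_{\overline e})\circ\Theta[\tilde C^e]=\Theta[C^e]$.

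Next I would transport this condition back along a quasi-inverse $\Theta^{-1}\colon\ucat\to\hilb$. Because $\Theta$ is part of a $*$-$2$-equivalence, for each ordered pair of $0$-morphisms it restricts to a $*$-isomorphism of $2$-morphism spaces; hence for each edge $e$ the assignment $\theta_e\mapsto U^e:=\Theta^{-1}[\theta_e]$ is a bijection between unitary $2$-morphisms $\Theta[\tilde\hh^e]\Rightarrow\Theta[\hh^e]$ in $\ucat$ and unitary $2$-morphisms $U^e\in 2\hilb(\tilde\hh^e\Rightarrow\hh^e)$. Applying the (faithful) $*$-pseudofunctor $\Theta^{-1}$ to the identity $(\theta_e\otimes\theta_{\overline e})\circ\Theta[\tilde C^e]=\Theta[C^e]$ and absorbing the unitary tensorators and unit constraints of $\Theta$ and $\Theta^{-1}$ converts it into $(U^e\otimes U^{\overline e})\circ\tilde C^e=C^e$, an identity of $2$-morphisms $\ONE_{J^a}\Rightarrow\hh^e\otimes\hh^{\overline e}$ in $\hilb$ (the associators and unitors of the non-strict $\hilb$ being inserted silently). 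Conversely, any family $\{U^e\}$ as in the statement produces a family $\{\theta_e\}$ meeting the hypotheses of Proposition~\ref{funda1}, hence a unitary equivalence $\ff\simeq\gg$. Combining the two directions gives the claimed equivalence.

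The bulk of this argument is formal transport of structure along a $*$-$2$-equivalence. The one step that demands care --- and which I expect to be the main obstacle --- is checking that this transport genuinely preserves \emph{and} reflects the ``$C^e$-conjugacy'' relation: one must confirm that the tensorators and unit constraints of $\Theta$ and $\Theta^{-1}$ interact trivially with the units $\ONE_a$ (this is where the hypotheses $\hh^{\ONE_a}=\tilde\hh^{\ONE_a}$, $C^{\ONE_a}=\tilde C^{\ONE_a}$ re-enter), so that $\Theta^{-1}\big((\theta_e\otimes\theta_{\overline e})\circ\Theta[\tilde C^e]\big)$ really does rewrite as $(U^e\otimes U^{\overline e})\circ\tilde C^e$ with no leftover coherence isomorphisms.
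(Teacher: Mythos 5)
Your proposal is correct and follows essentially the same route as the paper: both directions are obtained by transporting Proposition \ref{funda1} across the $*$-2-equivalence $\Theta$ of Proposition \ref{catequivalence}, defining $U^e := \Theta^{-1}(\theta_e)$ in the forward direction and pushing the family $\{U^e\}$ forward to meet the hypotheses of the converse of Proposition \ref{funda1} in the backward direction. You are in fact slightly more explicit than the paper about verifying the hypotheses on units and about the coherence cells of $\Theta$, which the paper passes over silently.
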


\begin{proof}
It is easy to see that $\Theta[S]$ and $\Theta[T]$ define $\Gamma$-fundamental solutions in $\ucat$, and this follows from the  monoidality of $\Theta.$ Hence, by Proposition (\ref{canonical}), we obtain canonical strict $*$-pseudofunctors $\ff$ and $\gg$ associated to $\Theta[S]$ and $\Theta[T],$ respectively.
 
 For the remaining assertions, let's first assume that $\ff$ and $\gg$ are unitarily equivalent via the the unitary pseudonatural transformation $\theta: \ff\Rightarrow \gg.$ This provides us a family of unitaries $\theta_e \in 2\ucat(\Theta(\hh^e) \Rightarrow\Theta(\tilde{\hh}^e)).$ Therefore,  by Proposition \ref{funda1}, for each $e\in E(\Gamma),$ we obtain the relation $\Theta(C^e) = (\theta_e\otimes\theta_{\bar{e}})\circ(\Theta(\tilde{C}^e)).$ Thus, defining $U^e:= \Theta^{-1}(\theta_e),$ we obtain the desired family of unitaries in $\hilb$ witnessing the desired equivalence. 
 
For the reversed direction, notice that the hypotheses in the converse of Proposition \ref{funda1} are met via the given family consisting of unitaries $U^e.$ This provides the desired modifications and unitary pseudonatural isomorphisms. The proof is therefore completed.
\end{proof}

\begin{remark}
Observe that in the previous corollary we asked for the indexing sets $S$ and $T$ to be identical. However, this need not always be the case. Say we have $S = (J, H, C)$ and $T = (\tilde J, \tilde H, \tilde C)$ that determine equivalent unitary modules. One can prove that that for each $a\in V(\Gamma)$, there exists a bijection $\varphi^a: J^a\rightarrow\tilde {J}^a.$ We can the introduce $\varphi^{-1}[T] := ( \{J^a\}= \{(\varphi^a)^{-1}[\tilde{J}^a]\}_{a\in V(\Gamma)}, \{\tilde{\hh}_{\varphi^a(v)\varphi^b(w)}\} ,\{C^e_{\varphi^a(v)\varphi^b(w)}\})$ and observe this is still a $\Gamma$-fundamental solution in $\hilb$. By doing this, we managed to switch to matching indexing sets for both $S$ and $\varphi^{-1}[T],$ disregarding relabeling of such sets.

\end{remark}

In the remaining part of this section, we introduce yet another technique describing equivalence of unitary modules in terms of antilinear maps between Hilbert Spaces:

$$\Phi^e_{vw}: \hh^e_{vw} \rightarrow \hh^{\overline{e}}_{wv}$$
defined by
\begin{align}\label{phi}
\Phi^e_{vw} (\xi) &:= (\xi^* \otimes \ONE_{\hh^{\overline{e}}_{wv}})(C^e_{vw}(1)),
\end{align}
where $\xi^* := \langle \, \cdot \, , \xi \rangle$.

We now restate the equivalence of unitary modules in terms of these associated antilinear operators.

\begin{proposition} \label{op-eq} Consider two $\Gamma$-fundamental solutions $S = (J,H,C)$ and $T = (\tilde{J},\tilde{H},\tilde{C})$ in $\hilb$ such that for each $a\in V(\Gamma)$ we have $\hh^{\ONE_a} = \tilde{\hh}^{\ONE_a}$ and $C^{\ONE_a} = \tilde{C}^{\ONE_a},$ with associated anti-linear maps $\set{\Phi^e_{vw}}$, $\set{\Psi^e_{vw}},$ respectively. Moreover, let $\ff, \gg$ be the unitary modules associated with $\Theta[S]$ and $\Theta[T],$ respectively. 
Then $\ff$ and $\gg$ are unitarily equivalent if and only if for every vertex $a \in V(\Gamma)$ there exists a bijection $\varphi^a: J^a \rightarrow \tilde{J}^a$ and every edge $e: a \rightarrow b$ there exists a unitary  $$U^e_{vw}: \tilde{\hh}^{e}_{\varphi^{a}(v)\varphi^b(w)} \rightarrow \hh^e_{vw}$$
such that
\begin{align}
    \Phi^e_{vw} &= U^{\overline{e}}_{wv}\circ \Psi^{e}_{\varphi^a(v)\varphi^b(w)}\circ (U^{e}_{vw})^*.
\end{align}
In other words, there exist unitaries such that the following diagram commutes for every $e \in E(\Gamma)$
\begin{equation}
  \begin{tikzcd}[every arrow/.append style={shift left}, column sep=large, row sep=large]
    \tilde{\hh}^{e}_{\varphi^a(v)\varphi^b(w)} \arrow{r}{\Psi^{e}_{\varphi^a(v)\varphi^b(w)}} \arrow{d}[left]{U^e_{vw}} & \tilde{\hh}^{\overline{e}}_{\varphi^a(w)\varphi^b(v)} \arrow{d}{U^{\overline{e}}_{wv}} \\
    \hh^e_{vw} \arrow{r}[below]{\Phi^e_{wv}} & \hh^{\overline{e}}_{wv} 
  \end{tikzcd}
\end{equation}
\end{proposition}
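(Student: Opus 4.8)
The plan is to reduce Proposition~\ref{op-eq} to Corollary~\ref{funda2} by showing that the antilinear map $\Phi^e_{vw}$ is nothing but a repackaging of the cup $C^e_{vw}$, so that the commuting square involving the $\Psi$'s and $\Phi$'s is equivalent to the intertwining relation $C^e = (U^e\otimes U^{\overline e})\circ\tilde C^e$. First I would handle the indexing-set issue: by the remark following Corollary~\ref{funda2}, if $\ff$ and $\gg$ are unitarily equivalent then for each $a\in V(\Gamma)$ there is a bijection $\varphi^a\colon J^a\to\tilde J^a$, and replacing $T$ by $\varphi^{-1}[T]$ we may assume $J^a=\tilde J^a$ and that $\varphi^a=\ONE$ throughout; conversely a given family of bijections lets us relabel $T$ the same way. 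Thus it suffices to prove the equivalence under the standing assumption $\tilde J^a = J^a$, and then the statement of Corollary~\ref{funda2} applies verbatim.

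The technical heart is a dictionary lemma: for any unitary $U^e_{vw}\colon\tilde\hh^e_{vw}\to\hh^e_{vw}$, the relation
\begin{equation}\label{dict}
C^e_{vw} = (U^e_{vw}\otimes U^{\overline e}_{wv})\circ\tilde C^e_{vw}
\qquad\Longleftrightarrow\qquad
\Phi^e_{vw} = U^{\overline e}_{wv}\circ\Psi^e_{vw}\circ (U^e_{vw})^*.
\end{equation}
To prove $(\Rightarrow)$ I would substitute $C^e_{vw}(1) = (U^e_{vw}\otimes U^{\overline e}_{wv})(\tilde C^e_{vw}(1))$ into the defining formula \eqref{phi}, $\Phi^e_{vw}(\xi) = (\xi^*\otimes\ONE)(C^e_{vw}(1))$, and push the adjoint: for $\xi\in\hh^e_{vw}$ write $\xi^* = \langle\,\cdot\,,\xi\rangle$, and use that $\xi^*\circ U^e_{vw} = \langle U^e_{vw}(\,\cdot\,),\xi\rangle = \langle\,\cdot\,,(U^e_{vw})^*\xi\rangle = ((U^e_{vw})^*\xi)^*$ since $U^e_{vw}$ is unitary. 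Then
\[
\Phi^e_{vw}(\xi)
= (\xi^*\otimes\ONE_{\hh^{\overline e}_{wv}})(U^e_{vw}\otimes U^{\overline e}_{wv})(\tilde C^e_{vw}(1))
= U^{\overline e}_{wv}\bigl(((U^e_{vw})^*\xi)^*\otimes\ONE_{\tilde\hh^{\overline e}_{wv}}\bigr)(\tilde C^e_{vw}(1))
= U^{\overline e}_{wv}\bigl(\Psi^e_{vw}((U^e_{vw})^*\xi)\bigr),
\]
which is the right-hand side of \eqref{dict}. For $(\Leftarrow)$ I would note that the assignment $C^e_{vw}\mapsto\Phi^e_{vw}$ is injective: $C^e_{vw}(1)\in\hh^e_{vw}\otimes\hh^{\overline e}_{wv}$ is recovered from $\Phi^e_{vw}$ because $(\xi^*\otimes\ONE)(C^e_{vw}(1)) = \Phi^e_{vw}(\xi)$ for all $\xi$ determines $C^e_{vw}(1)$ uniquely (pairing against an orthonormal basis of $\hh^e_{vw}$ recovers each tensor component, using finite-dimensionality of the graded pieces), and $C^e_{vw}$ is determined by $C^e_{vw}(1)$ since its domain is $\C$. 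Running the computation above backwards then shows the two maps with cup data $C^e_{vw}$ and $(U^e_{vw}\otimes U^{\overline e}_{wv})\tilde C^e_{vw}$ have the same associated antilinear form, hence are equal. One should also check compatibility with the $\ONE_a$ normalization: the hypothesis $C^{\ONE_a}=\tilde C^{\ONE_a}$ forces the $U^{\ONE_a}$-component to be the identity, matching the unit conditions imposed in Proposition~\ref{funda1} and Corollary~\ref{funda2}.

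With \eqref{dict} in hand the proof concludes quickly: by Corollary~\ref{funda2} (after the relabeling above), $\ff$ and $\gg$ are unitarily equivalent if and only if there exist unitaries $U^e\in 2\hilb(\tilde\hh^e\Rightarrow\hh^e)$, i.e.\ unitaries $U^e_{vw}\colon\tilde\hh^e_{vw}\to\hh^e_{vw}$ for each grading pair, with $C^e=(U^e\otimes U^{\overline e})\circ\tilde C^e$; componentwise this is exactly $C^e_{vw}=(U^e_{vw}\otimes U^{\overline e}_{wv})\circ\tilde C^e_{vw}$ for all $v,w$, which by the lemma is equivalent to the asserted commuting square $\Phi^e_{vw}=U^{\overline e}_{wv}\circ\Psi^e_{vw}\circ(U^e_{vw})^*$. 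Restoring the bijections $\varphi^a$ by undoing the relabeling gives the statement as written. I expect the main obstacle to be purely bookkeeping rather than conceptual: tracking the index swap $v\leftrightarrow w$ and the passage $e\leftrightarrow\overline e$ correctly through the $U$'s (note $U^e_{vw}$ versus $U^{\overline e}_{wv}$ is forced by the target $\hh^{\overline e}_{wv}$), and making sure the tensor-leg-versus-adjoint manipulation $\xi^*\circ U = ((U)^*\xi)^*$ is applied on the correct factor of the tensor product; the row-finiteness/finite-dimensionality of the $\hh^e_{vw}$ is what guarantees the antilinear form $\Phi^e_{vw}$ genuinely encodes all of $C^e_{vw}$ with no loss.
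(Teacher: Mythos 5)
Your proposal is correct and follows essentially the same route as the paper: both reduce the statement to Corollary~\ref{funda2} (handling the index sets via the bijections $\varphi^a$/relabeling) and then translate $C^e=(U^e\otimes U^{\overline e})\circ\tilde C^e$ into $\Phi^e_{vw}=U^{\overline e}_{wv}\circ\Psi^e_{vw}\circ(U^e_{vw})^*$ by exactly the adjoint-pushing computation you give. Your only variation is organizing this as a two-way dictionary lemma and settling the converse by injectivity of $C^e_{vw}\mapsto\Phi^e_{vw}$ instead of the paper's basis computation, which is a harmless (arguably cleaner) repackaging of the same argument.
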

\begin{proof}
First assume that $\ff$ and $\gg$ are unitarily equivalent via the pseudonatural isomorphism $\theta.$ For each edge $e\in E(\Gamma)$, we have unitary 2-morphisms in $2\hilb$ given by $\Theta^{-1}(\theta_e):\hh^e\rightarrow\tilde{\hh}^e.$ (We remind the reader that the notation $2\hilb$ is used to simply denote a 2-morphism space $\hilb,$ without specifying the underlying 1-morphisms, as introduced in Notation \ref{hom notation}) Thus, for each $a\in V(\Gamma),$ and each $v\in J^a$ there exists a unique $\varphi^a(v)\in \tilde{J}^a$ such that $U^e_{vw} :=[\Theta^{-1}(\theta_e)]_{vw}: \hh^e_{vw} \rightarrow \tilde{\hh}^e_{\varphi^a(v)\varphi^b(w)}$ defines a unitary between Hilbert spaces. Moreover, the correspondences $\varphi^a$ are necessarily bijective, since $\Theta^{-1}(\theta_e)$ is a unitary isomorphism between bigraded Hilbert spaces. Furthermore, by Corollary \ref{funda2}, it follows that $C^e_{vw} = (U^e_{vw}\otimes U^{\bar{e}}_{wv})\circ \tilde{C}^{e}_{\varphi^a(v)\varphi^b(w)}$.
Now, for any vector $\xi$, by expanding each $C^e_{vw}(1)$ in an arbitrary chosen basis for each Hilbert space, we obtain the following chain of equalities:
\begin{align*}
    \Phi^e_{vw}\circ U^e_{vw}(\xi) &= (\langle\cdot,U^e_{vw}(\xi)\rangle\otimes \ONE)\big(C^e_{vw}(1)\big)\\
    &= \big[\langle\cdot,U^e_{vw}(\xi)\rangle\otimes \ONE\big]\bigg(\sum_i a_i\otimes b_i\bigg)\\
    &= \sum_i \langle a_i,U^e_{vw}(\xi)\rangle\cdot b_i\\
    &= \sum_i U^{\bar {e}}_{wv}\big[\langle\cdot,\xi\rangle\otimes \ONE\big]\big[((U^e_{vw})^*a_i)\otimes ((U^{\bar{e}_{wv}})^*b_i)\big]\\
    &= U^{\bar {e}}_{wv}\big[(\langle\cdot,\xi\rangle\otimes \ONE)\big][(U^e_{vw})^*\otimes (U^{\bar{e}}_{wv})^*]\bigg(\sum_i a_i\otimes b_i\bigg)\\
    &= U^{\bar{e}}_{wv}\big[(\langle\cdot,\xi\rangle\otimes \ONE)\big]\big(\tilde{C}^{e}_{vw}(1)\big)\\
    &= U^{\bar{e}}_{wv}\circ\Psi^e_{vw}(\xi).
\end{align*}
This proves the forward direction.

The converse follows by computation, by choosing orthonormal bases for each Hilbert space and concluding by applying the converse of Corollary \ref{funda2}.
\end{proof}

By similar arguments as those found in \cite{DeCommer2013TannakaSU2}, we find the following necessary and sufficient conditions in order for families of anti-linear maps to be associated to fundamental solutions, allowing us to pass back and forth between these two. 

\begin{proposition} \label{op_prop}
Suppose we have a $\Gamma$-fundamental solution $S = (J, \hh, C)$ and $\{\Phi\}$ as in Equation (\ref{phi}). Then the family of operators 
$\set{\Phi^e_{vw}}^{e\in E(\Gamma)}_{vw}$ satisfy 
\begin{equation}\label{zig}
    \Phi^{\overline{e}}_{wv}\Phi^e_{vw} = \ONE_{\hh^{e}_{vw}}\ \text{and}\\
\end{equation}
\begin{equation}\label{zag}
    \sum_{w \in J^b} \Tr\left((\Phi^e_{vw})^*\Phi^e_{vw}\right) = \delta_e \text{, for every } v \in J^a.
\end{equation}
    Conversely, if a collection of antilinear operators $\set{\Phi^e_{vw}}$ satisfy these conditions, then the family $\set{C^e_{vw}}$ defined by $C^{e}_{vw}(1) := \sum_i \xi_i\otimes\Phi(\xi_i)$,  satisfy the zig-zag relations (Definition \ref{fundamentalsolution}), where $\{\xi_i\}$ is an ONB. (We remark that the definition of $C^e_{vw}$ is independent of the choice of ONB $\{\xi_i\}$.)
\end{proposition}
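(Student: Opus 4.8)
The plan is to make precise the standard correspondence between a vector of a tensor product of finite-dimensional Hilbert spaces and an antilinear map, and then to check that under this correspondence the two zigzag relations of Definition~\ref{fundamentalsolution}, unpacked componentwise in $\hilb$ (cf.\ Example~\ref{solinhilb}), become exactly (\ref{zig}) and (\ref{zag}).

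First I would record the elementary identity that for \emph{any} orthonormal basis $\{\xi_i\}$ of the (finite-dimensional) space $\hh^e_{vw}$ one has $C^e_{vw}(1) = \sum_i \xi_i \otimes \Phi^e_{vw}(\xi_i)$: pairing the right-hand side against $\xi^* \otimes \ONE_{\hh^{\overline{e}}_{wv}}$ returns $\sum_i \langle \xi_i,\xi\rangle\, \Phi^e_{vw}(\xi_i) = \Phi^e_{vw}(\xi)$ by conjugate-linearity and orthonormality, and a vector of $\hh^e_{vw}\otimes\hh^{\overline{e}}_{wv}$ is determined by all such pairings. Read in the other direction, this same formula produces a $2$-morphism $C^e_{vw}$ from an antilinear map $\Phi^e_{vw}$, and I would verify it is independent of the chosen ONB by the usual change-of-basis computation: replacing $\{\xi_i\}$ by $\{\zeta_j\} = \{\sum_i u_{ij}\xi_i\}$ with $(u_{ij})$ unitary, the cross terms reassemble via $\sum_j u_{ij}\overline{u_{kj}} = \delta_{ik}$. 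This makes $C^e_{vw} \leftrightarrow \Phi^e_{vw}$ mutually inverse, so it suffices to match the relations on the two sides.

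For the first zigzag relation, substituting the ONB-expansions of $C^e_{vw}(1)$ and of $C^{\overline{e}}_{wv}(1)$ into $((C^e_{vw})^* \otimes \ONE_{\hh^e_{vw}}) \circ (\ONE_{\hh^e_{vw}} \otimes C^{\overline{e}}_{wv})$ and evaluating at $\xi \in \hh^e_{vw}$, the sum over an ONB of $\hh^{\overline{e}}_{wv}$ collapses term by term --- using antilinearity of $\Phi^{\overline{e}}_{wv}$ --- to $\Phi^{\overline{e}}_{wv}(\Phi^e_{vw}(\xi_i))$, and the residual sum over $\{\xi_i\}$ reassembles $\xi$ since the composite $\Phi^{\overline{e}}_{wv}\Phi^e_{vw}$ is \emph{linear}. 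Hence this relation holds if and only if $\Phi^{\overline{e}}_{wv}\Phi^e_{vw} = \ONE_{\hh^e_{vw}}$, i.e.\ (\ref{zig}). For the second relation I would unpack $(C^e)^* \circ C^e = \delta_e \cdot \ONE_{V^a}$ in $\hilb$: since $C^e = \bigoplus_v \sum_{w} C^e_{vw}$ with the $w$-summands pairwise orthogonal, its $(v,v)$-diagonal component is the scalar $\sum_{w \in J^b} \| C^e_{vw}(1) \|^2$, and expanding each norm in the ONB and using orthonormality gives $\| C^e_{vw}(1) \|^2 = \sum_i \| \Phi^e_{vw}(\xi_i) \|^2 = \Tr((\Phi^e_{vw})^* \Phi^e_{vw})$, where $(\Phi^e_{vw})^*$ is the adjoint of the antilinear map (so $(\Phi^e_{vw})^*\Phi^e_{vw}$ is linear and positive). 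Hence the relation holds if and only if $\sum_{w} \Tr((\Phi^e_{vw})^* \Phi^e_{vw}) = \delta_e$, i.e.\ (\ref{zag}).

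The converse is then immediate: given antilinear operators $\{\Phi^e_{vw}\}$ satisfying (\ref{zig}) and (\ref{zag}), define $C^e_{vw}$ by the displayed formula (well defined by the first paragraph), observe that it reproduces the given $\Phi^e_{vw}$ via (\ref{phi}), and read the two equivalences above from right to left to recover the zigzag relations of Definition~\ref{fundamentalsolution}. I do not anticipate a genuine obstacle: this is the De~Commer--Yamashita dictionary between conjugate equations and antilinear operators, and the only points requiring care are the correct handling of adjoints of antilinear maps and the reading-off of the diagonal $\hilb$-components --- in particular, noticing that the second zigzag relation, being an equation between endomorphisms of $\ONE_{V^a}$, already carries the summation over $w$ that appears in (\ref{zag}).
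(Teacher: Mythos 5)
Your proof is correct and follows essentially the same route as the paper's: expand $C^e_{vw}(1)=\sum_i \xi_i\otimes\Phi^e_{vw}(\xi_i)$ in an ONB, identify the componentwise zigzag relations in $\hilb$ with \eqref{zig} and the trace condition \eqref{zag} via $\sum_i\|\Phi^e_{vw}\xi_i\|^2=\Tr((\Phi^e_{vw})^*\Phi^e_{vw})$, and reverse the dictionary for the converse. Your extra care about ONB-independence and about the sum over $w$ coming from the $(v,v)$-component of $(C^e)^*C^e$ only makes explicit points the paper treats parenthetically.
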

\begin{proof} Let us first check that \eqref{zig} holds. Unwinding the definition of $\Phi^e_{vw}$, we have for any $\xi \in \hh^e_{vw}$
$$\Phi^{\overline{e}}_{wv}\Phi^e_{vw}(\xi) = \Phi^{\overline{e}}_{wv}\left[(\xi^* \otimes \ONE_{\hh^{e}})(C^e_{vw}(1))\right] =  \left((C^{\overline{e}}_{wv})^* \otimes \ONE_{\hh^{\overline{e}}}\right)\left(\xi \otimes C^e_{vw}(1)\right).$$
Using the first equality in Example \ref{solinhilb}, we obtain that the right hand side is equal to $\ONE_{\hh^e_{vw}}$. We now verify equation \eqref{zag}. First choose orthonormal bases $(\xi_i)_i$ of the Hilbert spaces $\hh^e_{vw}$. Then 
\begin{align*}
    \sum_w (C^e_{vw})^* C^e_{vw} &= \sum_w \sum_i \left((\xi^{*}_i \otimes \ONE_{\hh^{\overline{e}}}) C^e_{vw}\right)^* \left((\xi^{*}_i \otimes \ONE_{\hh^{\overline{e}}}) C^e_{vw}\right)\\
    &= \sum_w \sum_i \langle \Phi^e_{vw} \xi_i, \Phi^e_{vw} \xi_i \rangle\\
    &= \sum_w \Tr((\Phi^e_{vw})^* \Phi^e_{vw}).
\end{align*}
By the second equality in Example \ref{solinhilb}, we have that $\sum_w \Tr((\Phi^e_{vw})^* \Phi^e_{vw}) = \delta_e$ for every $v$. The converse holds by similar arguments, taking each $C^e_{vw}$ to be the unique map such that Equation (\ref{phi}) holds for $\Phi^e_{vw}$.
\end{proof}

\begin{remark}
The above conditions imply that $\hh^e_{vw}$ and $\hh^{\overline{e}}_{wv}$ have the same dimension for every $v,w$ and edge $e \in E(\Gamma)$, since $\Phi^e_{vw}$ is invertible.
\end{remark}

\section{Classification of Unitary TLJ-modules by Graphs} \label{last_section}
We use the equivalences from the previous section to classify certain unitary modules of graph-generated Temperley-Lieb categories in terms of edge-colored oriented weighted graphs. We first introduce notation and basic definitions. Throughout this section we let $\Gamma$ be a fixed but arbitrary weighted bidirected graph, as in Definition \ref{bidirectedgraph}.  We reserve the symbols $S = (J,H,C)$ for a $\Gamma$-fundamental solution in $\hilb$. Furthermore, we denote the associated anti-linear maps of $S$ by $\set{\Phi^e_{vw}}$ as defined in \eqref{phi}. We also reserve $e$ for edges in $\Gamma$ and $\epsilon$ for edges in the graphs we will use to classify our unitary modules.

\begin{notation}
Let $(\lambda_k^{(e,vw)})_k$ denote the eigenvalues of the bounded linear transformation $[(\Phi^e_{vw})^* \Phi^e_{vw}]: \hh^e_{vw}\rightarrow \hh^e_{vw}$ counted with multiplicity.
\end{notation}

Now we construct a weighted oriented graph using the spectral data of these operators.

\begin{definition}
We define the graph $(\Lambda_S,w_S,\pi_S)$ generated by a $\Gamma$-fundamental solution $S$ in $\hilb$ as the weighted oriented graph, which has the vertex set $V(\Lambda_S) := \sqcup J$ (the disjoint union of the sets $J^a\in J$ produces the \textbf{indexed} collection of all points in the sets in the collection $J$) and for each edge $e:a \rightarrow b$ in $\Gamma$ we trace $\text{dim}(\hh^e_{vw})$ arrows $\{\epsilon^{(e)}_k\}_k$ from $v \in J^a$ to $w \in J^b$ with weights $(\lambda^{(e,vw)}_k)_k$ given by the spectrum of $[(\Phi^e_{vw})^*\Phi^e_{vw}]$, counted with multiplicity. Notice this specifies a weight function $w_S: E(\Lambda_S) \rightarrow (0,\infty).$ We then define $\pi_S: \Lambda_S \rightarrow \Gamma$ as the graph homomorphism that sends every $v \in J^a$ to $a$ and every $\epsilon^{(e)}_k$ to $e$.
\end{definition}
We now show a simple example to explain the relevance of the disjoint union in the previous paragraph. Say, $J = \{J^a = \{v,w\}, J^b = \{w,z\}\}.$ We then have that $\sqcup J = \{v_a,w_a,w_b, z_b\}\neq \{v,w,z\} = \cup J.$ Notice how the advantage of taking a disjoint union is that it ``remembers" where every element came from.
\begin{definition}\label{fairgraph}
We say a weighted directed graph $(\Lambda, w,\pi)$ with a graph homomorphism $\pi: \Lambda \rightarrow \Gamma$ is a $\Gamma$-\textbf{fair} graph if and only if 
for each $e:a\rightarrow b\in E(\Gamma)$ and every vertex $\alpha \in \pi^{-1}(a)$
$$\sum_{\substack{source(\epsilon) = \alpha \\ \pi(\epsilon) = e}} w(\epsilon) = \delta_e.$$
\end{definition}
\begin{remark}
We observe that if $(\Lambda,w,\pi)$ is a $\Gamma$-fair graph, then necessarily $\pi$ is surjective onto $E(\Gamma)$, as otherwise the summation condition in Definition \ref{fairgraph} would give an edge $e\in E(\Gamma)$ with $\delta_e = 0,$ contradicting the initial assumption that $\Gamma$ is a weighted bi-directed graph.
\end{remark}

\begin{definition}\label{zen}
We say two $\Gamma$-fair graphs $(\Lambda_1, w_1, \pi_1)$, $(\Lambda_2, w_2, \pi_2)$ are isomorphic if and only if there exists a graph isomorphism $\varphi: \Lambda_1 \rightarrow \Lambda_2$ such that $\pi_1 = \pi_2 \circ \varphi$ and $w_1 = w_2 \circ \varphi$.
\end{definition}
\begin{definition} \label{balanced}
We say a $\Gamma$-fair graph $(\Lambda,w,\pi)$ is \textbf{balanced} if and only if there exists an involution (\,$\overline{\, \cdot \,}$\,) on $E(\Lambda)$ that switches sources and targets, such that for every $\epsilon \in E(\Lambda)$
\begin{align*}
    w(\epsilon)w(\overline{\epsilon}) &= 1,\ and\\
    \pi(\overline{\epsilon}) &= \overline{\pi(\epsilon)}.
\end{align*}
\ \\
Note that the involution on the left hand side of the last equation is that of $\Lambda$, and the involution on the right hand side is the involution on $\Gamma$. We conclude this Definition by remarking that the existence of such an involution is a property and not extra structure, as in (\cite{DeCommer2013TannakaSU2}, p2 Remark 1).
\end{definition}
We provide an example of a balanced $\Gamma$-fair graph for a chosen bi-directed graph $\Gamma$.
\begin{example}
Let $\Gamma_1$ be the following weighted bidirected graph.

\begin{equation}
    \begin{tikzpicture}[thick,baseline={([yshift=-\the\dimexpr\fontdimen22\textfont2\relax] current bounding box.center)}]
        \node (a) at ( 2,0) [circle,draw=black!100,fill=black!20] {};
        \node (b) at ( 1,0) [circle,draw=black!100,fill=black!60] {};
        \node (c) at ( 0,0) [circle,draw=black!100] {};
        \draw[->] (a) to[bend left] node[below] {$2$} (b);
        \draw[->] (b) to[bend left] node[above] {$2$} (a);
        \draw[->] (c) to [in=150,out=120,loop] node[left] {$1$} (c);
        \draw[->] (c) to [in=210,out=240,loop] node[left] {$1$} (c);
        \draw[->,blue!50] (a) to [in=30,out=60,loop] node[right] {$2$} (a);
        \draw[->] (a) to [in=-30,out=-60,loop] node[right] {$1$} (a);
        \draw[->] (c) to[bend left] node[above] {$2$} (b);
        \draw[->] (b) to[bend left] node[below] {$2$} (c);
    \end{tikzpicture}
\end{equation}
Then the weighted bidirected graph $\Lambda_1$ shown below is a balanced $\Gamma_1$-fair graph.

\begin{equation}
    \begin{tikzpicture}[thick,baseline={([yshift=-\the\dimexpr\fontdimen22\textfont2\relax] current bounding box.center)}]
        \node (c)  at (-1,1)  [circle,draw=black!100]                {};
        \node (c2) at (-1,-1) [circle,draw=black!100] {};
        \node (a1) at (0,1)  [circle,draw=black!100,fill=black!60]   {};
        \node (a2) at (0,-1) [circle,draw=black!100,fill=black!60]   {};
        \node (b1) at (1,1)   [circle,draw=black!100,fill=black!20] {};
        \node (b2) at (1,-1)  [circle,draw=black!100,fill=black!20] {}; 
        \draw[->] (c) to [in=150,out=120,loop] node[left] {$1$} (c);
        \draw[->] (c) to [in=210,out=240,loop] node[left] {$1$} (c);
        \draw[->] (c) to[bend left=10] node[above] {$1$} (a1);
        \draw[->] (c) to[bend left=10] node [above right] {} (a2);   
        \draw[->] (a1) to[bend left=10] node[below] {$1$} (c);
        \draw[->] (a2) to[bend left=10] node[below] {} (c);   
        \draw[->] (c2) to [in=150,out=120,loop] node[left] {$1$} (c2);
        \draw[->] (c2) to [in=210,out=240,loop] node[left] {$1$} (c2);
        \draw[->] (c2) to[bend left=10] node[above] {$1$} (a2);
        \draw[->] (c2) to[bend left=10] node[above right] {} (a1);   
        \draw[->] (a1) to[bend left=10] node[below right] {} (c2);
        \draw[->] (a2) to[bend left=10] node[below] {$1$} (c2);   
        \draw[->] (a1) to[bend left=10] node[above] {$1$} (b1);
        \draw[->] (b1) to[bend left=10] node[below] {$1$} (a1); 
        \draw[->] (a1) to[bend left=10] node[above] {} (b2);
        \draw[->] (b2) to[bend left=10] node[below] {} (a1); 
        \draw[->] (a2) to[bend left=10] node[above] {$1$} (b2);
        \draw[->] (b2) to[bend left=10] node[below] {$1$} (a2); 
        \draw[->] (a2) to[bend left=10] node[above] {} (b1);
        \draw[->] (b1) to[bend left=10] node[below] {} (a2);  
        \draw[->] (b1) to [in=-30,out=-60,loop] node[right] {$1$} (b1);
        \draw[->] (b2) to [in=-30,out=-60,loop] node[right] {$1$} (b2);
        \draw[->,blue!50] (b1) to [in=10,out=40,loop] node[right] {$1$} (b1);
        \draw[->,blue!50] (b1) to [in=70,out=100,loop] node[right] {$1$} (b1);
        \draw[->,blue!50] (b2) to [in=10,out=40,loop] node[right] {$1$} (b2);
        \draw[->,blue!50] (b2) to [in=70,out=100,loop] node[right] {$1$} (b2);
    \end{tikzpicture}
\end{equation}
\end{example}

\begin{remark} In the case where $\Gamma$ has only one vertex and one edge, being a balanced $\Gamma$-fair graph is the same as being a fair and balanced $\delta$-graph, as in \cite{DeCommer2013TannakaSU2}.
\end{remark}
\begin{remark}\label{MW condition}
At this stage, it is important to mention the graphs studied in \cite{Morrison2010Graphpre-print}, which we denote by \textbf{MW-type} graphs. Consider a graph homomorphism $\pi: \Lambda\rightarrow \Gamma$ onto $E(\Lambda),$ where $\Lambda$ comes equipped with a \textit{Perron-Frobenius dimension data for} $\pi$, $d:V(\Lambda)\rightarrow (0,\infty),$ satisfying the following two conditions: for every $(e:a\rightarrow b)\in E(\Gamma)$ we have that
$$\delta_{e} = \sum_{(\alpha\rightarrow \beta) \,\in\, \pi^{-1}[e]} \frac{d(\beta)}{d(\alpha)},$$
for each $\alpha \in \pi^{-1}[a],$ and
$$\delta_{e} = \sum_{(\alpha \rightarrow \beta) \,\in\, \pi^{-1}[e]} \frac{d(\alpha)}{d(\beta)}$$
for each $\beta \in \pi^{-1}[b]$.

It is easy to see that MW-type graphs together with all the information listed above constitute examples of balanced $\Gamma$-fair graphs. However these conditions are not exactly equivalent as we will see in the following proposition, which is based on the discussion on top of page 12 of \cite{Hartglass2018Non-tracialAlgebras}.
\end{remark}
\begin{proposition}
Let $(\Lambda,w,\pi)$ be a balanced $\Gamma$-fair graph such that for each loop $\alpha_0 \stackrel{\epsilon_0}{\rightarrow} \beta_0 = \alpha_1 \stackrel{\epsilon_1}{\rightarrow} \beta_1 = \alpha_2 \stackrel{\epsilon_2}{\rightarrow}\hdots \stackrel{\epsilon_n}{\rightarrow}\beta_n = \alpha_0$ in $\Lambda$ we have that $\Pi_{i = 0}^{n}w(\epsilon_i) = 1.$ Then $\pi: \Lambda\rightarrow\Gamma$ gives an MW-type graph.
\end{proposition}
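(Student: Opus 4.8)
The plan is to manufacture Perron--Frobenius dimension data $d\colon V(\Lambda)\to(0,\infty)$ for $\pi$ out of the weighting $w$, and then to recover the two identities defining an MW-type graph (Remark \ref{MW condition}) from $\Gamma$-fairness together with the balanced involution of Definition \ref{balanced}.

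First I would observe that the hypothesis on loops says exactly that the multiplicative ``holonomy'' of $w$ is trivial, so $w$ is a coboundary. Since $\Lambda$ is connected (working on each connected component separately if it is not) and the balanced involution $\overline{\,\cdot\,}$ reverses every edge with $w(\epsilon)w(\overline{\epsilon})=1$, any two vertices of $\Lambda$ are joined by a \emph{directed} walk: realize an undirected path by traversing each edge via whichever of $\epsilon,\overline{\epsilon}$ points in the direction of travel. Fix a basepoint $v_0$, put $d(v_0):=1$, and for a vertex $v$ choose a directed walk $v_0=u_0\xrightarrow{\eta_1}u_1\to\cdots\xrightarrow{\eta_m}u_m=v$ and set $d(v):=\prod_{i=1}^{m}w(\eta_i)$. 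Given a second such walk, concatenating the first with the reverse of the second (traversing $\overline{\eta}$ for each step $\eta$, so that $w(\overline{\eta})=w(\eta)^{-1}$) produces a directed closed walk based at $v_0$; the loop hypothesis forces the product of its weights to be $1$, which is precisely the statement that the two values proposed for $d(v)$ coincide. Hence $d$ is well defined, and by construction $w(\epsilon)=d(\beta)/d(\alpha)$ for every edge $\epsilon\colon\alpha\to\beta$ of $\Lambda$.

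With $d$ in hand I would read off the two MW identities directly. For $e\colon a\to b$ in $E(\Gamma)$ and $\alpha\in\pi^{-1}(a)$, Definition \ref{fairgraph} becomes
\[
\delta_e=\sum_{\substack{\mathrm{source}(\epsilon)=\alpha\\ \pi(\epsilon)=e}}w(\epsilon)=\sum_{(\alpha\to\beta)\in\pi^{-1}[e]}\frac{d(\beta)}{d(\alpha)},
\]
the first condition of Remark \ref{MW condition}. For the second, fix $\beta\in\pi^{-1}(b)$ and note that $\epsilon\mapsto\overline{\epsilon}$ is a bijection from $\{\epsilon\colon\alpha\to\beta : \pi(\epsilon)=e\}$ onto $\{\eta\colon\beta\to\alpha : \pi(\eta)=\overline{e}\}$ with $w(\overline{\epsilon})=w(\epsilon)^{-1}=d(\alpha)/d(\beta)$; applying $\Gamma$-fairness to the edge $\overline{e}$ at the vertex $\beta$, and using $\delta_{\overline{e}}=\delta_e$ from Definition \ref{bidirectedgraph}, gives
\[
\delta_e=\delta_{\overline{e}}=\sum_{\substack{\mathrm{source}(\eta)=\beta\\ \pi(\eta)=\overline{e}}}w(\eta)=\sum_{(\alpha\to\beta)\in\pi^{-1}[e]}\frac{d(\alpha)}{d(\beta)}.
\]
Since $\pi$ is onto $E(\Gamma)$ for every $\Gamma$-fair graph (the remark following Definition \ref{fairgraph}), the data $(\Lambda,\pi,d)$ meets all the requirements of Remark \ref{MW condition}, so $\pi\colon\Lambda\to\Gamma$ is of MW-type.

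The main obstacle is the well-definedness of $d$: one must check that the loop hypothesis trivializes the holonomy of $w$ over \emph{all} directed closed walks (not merely embedded cycles), and that the balanced involution furnishes enough directed walks between vertices for the path-product construction to make sense at all; this is exactly why the hypothesis is phrased for arbitrary closed walks rather than simple cycles. Everything after that is bookkeeping with the conventions $w(\epsilon)=d(\beta)/d(\alpha)$ and $w(\epsilon)w(\overline{\epsilon})=1$.
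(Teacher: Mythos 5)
Your proposal is correct and follows essentially the same route as the paper: fix a basepoint, define $d$ by multiplying weights along paths, and use the loop hypothesis to see that $d$ is well defined, after which the two Perron--Frobenius identities follow from $\Gamma$-fairness and the balanced involution. The only difference is one of detail: you spell out the directed-walk construction via $\epsilon\mapsto\overline{\epsilon}$ and the verification of both MW conditions (using $\delta_{\overline{e}}=\delta_e$), steps the paper compresses into ``it is now immediate.''
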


\begin{proof}
First define the dimension function $d$ on all of $V(\Lambda).$ Start by fixing an arbitrary vertex $\alpha\in V(\Lambda)$ and defining $d(\alpha) = 1.$ Now if $(\epsilon:\alpha\rightarrow\beta)\in E(\Lambda)$, we simply define $d(\beta) = d(\alpha)\cdot w(\epsilon).$ We shall then show that we can extend this function to any arbitrary vertex $\beta\in V(\Lambda).$ Let $l = (\epsilon_0,\epsilon_1,\hdots,\epsilon_n)$ be a path in $\Lambda$ starting at $\alpha$ and ending at $\beta$. We then define $d(\beta) := \Pi_{i= 0}^{n}w(\epsilon_i).$ Notice that this indeed yields a well-defined function on $V(\Lambda),$ as made possible by the loop condition stated above; this is, the definition of $d(\beta)$ is independent of the choice of path joining $\alpha$ with $\beta$. It is now immediate that the function $d$ is indeed a Perron-Frobenius dimension function.
\end{proof}

\begin{proposition}\label{is balanced}
Let $S$ be a fundamental solution in $\hilb$. Then the graph $(\Lambda_S,w_S,\pi_S)$ generated by $S$ is a balanced $\Gamma$-fair graph.
\end{proposition}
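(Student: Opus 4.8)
The plan is to read off both required properties directly from Proposition~\ref{op_prop}. For $\Gamma$-fairness (Definition~\ref{fairgraph}), I would fix an edge $e:a\to b$ in $E(\Gamma)$ and a vertex $\alpha\in\pi_S^{-1}(a)$; by construction $\alpha$ is an element $v\in J^a$, and the edges $\epsilon$ of $\Lambda_S$ with $\mathrm{source}(\epsilon)=\alpha$ and $\pi_S(\epsilon)=e$ are exactly the arrows $\epsilon^{(e)}_k$ from $v$ to $w$, as $w$ runs over $J^b$ and $k$ over $\{1,\dots,\dim\hh^e_{vw}\}$, carrying weights $w_S(\epsilon^{(e)}_k)=\lambda^{(e,vw)}_k$. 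Since the $\lambda^{(e,vw)}_k$ are the eigenvalues of $(\Phi^e_{vw})^*\Phi^e_{vw}$ counted with multiplicity, summing over $k$ gives $\Tr((\Phi^e_{vw})^*\Phi^e_{vw})$, and then
\[
\sum_{\substack{\mathrm{source}(\epsilon)=\alpha\\ \pi_S(\epsilon)=e}} w_S(\epsilon)\;=\;\sum_{w\in J^b}\Tr\!\big((\Phi^e_{vw})^*\Phi^e_{vw}\big)\;=\;\delta_e
\]
by \eqref{zag} (which in particular guarantees the sum converges). This is precisely the defining condition of a $\Gamma$-fair graph.

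For balancedness (Definition~\ref{balanced}) the key point is a spectral identity. By \eqref{zig}, applied to $e$ and to $\overline{e}$, the anti-linear maps $\Phi^e_{vw}$ and $\Phi^{\overline{e}}_{wv}$ are mutually inverse; writing $T=\Phi^e_{vw}$ and using the anti-linear adjoint, $(\Phi^{\overline{e}}_{wv})^*\Phi^{\overline{e}}_{wv}=(T^*)^{-1}T^{-1}=(TT^*)^{-1}$. Since $TT^*=T(T^*T)T^{-1}$ is the conjugate of the positive operator $T^*T$ by the invertible anti-linear map $T$ -- and $T$ carries a $\lambda$-eigenvector of $T^*T$ to a $\lambda$-eigenvector of $TT^*$ because $\lambda$ is real -- the operators $T^*T$ and $TT^*$ have the same eigenvalues with the same multiplicities. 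Hence the multiset $\{\lambda^{(\overline{e},wv)}_k\}_k$ of weights of the $\overline{e}$-arrows from $w$ to $v$ in $\Lambda_S$ is exactly $\{(\lambda^{(e,vw)}_k)^{-1}\}_k$; in particular $\dim\hh^e_{vw}=\dim\hh^{\overline{e}}_{wv}$, recovering the remark after Proposition~\ref{op_prop}.

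Next I would build the balancing involution. For each triple $(f,v,w)$ with $f\in E(\Gamma)$, $v$ a vertex over $\mathrm{source}(f)$ and $w$ a vertex over $\mathrm{target}(f)$, let $A_{f,v,w}$ be the set of $f$-arrows from $v$ to $w$ in $\Lambda_S$; the map $(f,v,w)\mapsto(\overline{f},w,v)$ is an involution on such triples, whose fixed points are the triples with $f=\overline{f}$ and $v=w$. The spectral identity above says that for each triple the multiset of weights of $A_{f,v,w}$, inverted termwise, equals that of $A_{\overline{f},w,v}$. So on each two-element orbit $\{(f,v,w),(\overline{f},w,v)\}$ I choose any weight-inverting bijection $A_{f,v,w}\to A_{\overline{f},w,v}$ and declare its inverse to be the bijection attached to the other triple; on each fixed triple, the weight multiset of $A_{f,v,v}$ is closed under inversion with multiplicity, so I choose a weight-inverting \emph{involution} of $A_{f,v,v}$ (any fixed point necessarily has weight $1$). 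Defining $\overline{\epsilon}$ to be the image of $\epsilon$ under the bijection attached to its triple then gives an involution of $E(\Lambda_S)$ that swaps sources and targets, satisfies $\pi_S(\overline{\epsilon})=\overline{f}=\overline{\pi_S(\epsilon)}$ by construction, and satisfies $w_S(\epsilon)\,w_S(\overline{\epsilon})=w_S(\epsilon)\cdot w_S(\epsilon)^{-1}=1$. Therefore $(\Lambda_S,w_S,\pi_S)$ is a balanced $\Gamma$-fair graph.

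I expect the only genuine subtlety to be the bookkeeping in the last step -- organizing the per-triple bijections so that $\overline{\,\cdot\,}$ is a single global involution, and in particular the separate handling of self-dual edges based at one vertex -- together with the (routine) verification that the anti-linear adjoint satisfies $(T^{-1})^*=(T^*)^{-1}$, so that $(\Phi^{\overline{e}}_{wv})^*\Phi^{\overline{e}}_{wv}=(TT^*)^{-1}$. Everything else is an immediate translation of \eqref{zig}--\eqref{zag}.
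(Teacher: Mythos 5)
Your proof is correct and follows the same overall strategy as the paper's: $\Gamma$-fairness is read off from \eqref{zag} exactly as in the paper's first display, and balancedness is reduced to the spectral identity $\sigma\big((\Phi^{\overline{e}}_{wv})^*\Phi^{\overline{e}}_{wv}\big)=\sigma\big((\Phi^{e}_{vw})^*\Phi^{e}_{vw}\big)^{-1}$ counted with multiplicity. Where you differ is in how that identity is obtained and in how the involution is produced. The paper takes the left polar decomposition $\Phi^e_{vw}=V^e_{vw}|\Phi^e_{vw}|$, uses \eqref{zig} to express $\Phi^{\overline{e}}_{wv}$ as an anti-unitary times a positive operator, and invokes uniqueness of the polar decomposition to relate $|\Phi^{\overline{e}}_{wv}|$ to a conjugate of $|\Phi^e_{vw}|^{-1}$; you instead compute $(\Phi^{\overline{e}}_{wv})^*\Phi^{\overline{e}}_{wv}=(TT^*)^{-1}$ directly from $\Phi^{\overline{e}}_{wv}=T^{-1}$ (with $T=\Phi^e_{vw}$, using $(T^{-1})^*=(T^*)^{-1}$ for anti-linear maps) and observe that $TT^*=T(T^*T)T^{-1}$ has the same eigenvalues with multiplicities because the eigenvalues are real and $T$ is an invertible anti-linear map. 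This is slightly more elementary (no polar decomposition or its uniqueness is needed) and equally valid, and it recovers the dimension equality $\dim\hh^e_{vw}=\dim\hh^{\overline{e}}_{wv}$ just as the paper's remark does. You are also more explicit than the paper at the last step: the paper concludes by matching each edge $\epsilon$ with some $\epsilon'$ of reciprocal weight and asserts balancedness, leaving the assembly of these matchings into a single global involution implicit, whereas your orbit-by-orbit bookkeeping over the triples $(f,v,w)$ (weight-inverting bijections on two-element orbits, a weight-inverting involution on the self-dual case $f=\overline{f}$, $v=w$, whose fixed points necessarily carry weight $1$) actually exhibits the required involution; this extra care is consistent with, and fills in, what the paper treats as routine, in line with its remark that being balanced is a property rather than extra structure.
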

\begin{proof}
From Proposition \ref{op_prop}, we have for every $v \in J^a$
$$\sum_{\substack{source(\epsilon) \,= \,v\\ \pi_S(\epsilon) \,= \,e}} w(\epsilon) = \sum_w \sum_k \lambda_k^{(e,vw)} = \sum_w \Tr\big((\Phi^e_{vw})^*\Phi^e_{vw}\big) = \delta_e.$$ 
    Moreover, if there are no arrows from $v$ to $w$, then dim$(\hh^e_{vw}) = 0$. By the remark following Proposition \ref{op_prop} we conclude that dim$(\hh^{\overline{e}}_{wv}) = 0$ as well, so there are no edges from $w$ to  $v$ either.  Assume now that we are not in this trivial case. We consider the left polar decomposition of the maps $\Phi^e_{vw} = V^e_{vw} |\Phi^e_{vw}|$ and so that
$$V^e_{vw}: \hh^e_{vw} \rightarrow \hh^{\overline{e}}_{wv}$$
are isometric anti-linear maps and
$$|\Phi^e_{vw}|: \hh^e_{vw} \rightarrow \hh^e_{vw}$$
are positive linear maps. From $\text{dim}(\hh^e_{vw}) = \text{dim}(\hh^{\overline{e}}_{wv})$, we know that $V^e_{vw}$ is an anti-unitary since it is an anti-linear isometry. Now from the first equation in Proposition \ref{op_prop}, it follows that
\begin{align*}
    \Phi^{\overline{e}}_{wv} &= (\Phi^e_{vw})^{-1}\\
    &= |\Phi^e_{vw}|^{-1} (V^e_{vw})^{-1}\\
    &= \underbrace{(V^e_{vw})^*}_{\text{anti-unitary}} \underbrace{V^e_{vw} |\Phi^e_{vw}|^{-1} (V^e_{vw})^*}_{\text{positive}}
\end{align*}
By uniqueness of left polar decomposition we obtain that
\begin{align*}
U^{\overline{e}}_{wv} = (U^e_{vw})^*,\\
|\Phi^{\overline{e}}_{wv}| = U^e_{vw} |\Phi^e_{vw}|^{-1} (U^e_{vw})^*.
\end{align*}
Let us consider the spectrum of $(\Phi^e_{vw})^*\Phi^e_{vw}$ counted with multiplicity. We find that
\begin{align*}
    \sigma\big((\Phi^e_{vw})^*\Phi^e_{vw}\big) &= \sigma\Big(\big((\Phi^e_{vw})^*\Phi^e_{vw}\big)^{-1} \Big)^{-1}\\
    &= \sigma\Big( \big(|\Phi^e_{vw}|\big)^{-1} \big(|\Phi^e_{vw}|^* \big)^{-1}\Big)^{-1}\\
    &= \sigma\big(  (\Phi^{\overline{e}}_{wv})^* \Phi^{\overline{e}}_{wv} \big)^{-1}
\end{align*}
by using the relations between the polar decompositions of $\Phi^{e}_{vw}$ and $\Phi^{\overline{e}}_{wv}$ found above. First, note that each term above is well-defined, as these are invertible operators. Second, notice that our use of the sprectral theorem is justified, as we are dealing with bounded self-adjoint operators. Thus, for every edge $\epsilon: v \rightarrow w$ in $\Lambda_S$ with $\pi(\epsilon) = e$, there exists another edge $\epsilon': w \rightarrow v$ with the property that $\pi(\epsilon') = \overline{e}$ such that $w(\epsilon)w(\epsilon') = 1$. Hence $\Lambda_S$ is a balanced $\Gamma$-fair graph.
\end{proof}

\begin{definition}
Given a balanced $\Gamma$-fair graph $(\Lambda, w, \pi)$, we generate a fundamental solution $S_\Lambda$ in $\hilb$ as follows: 
\begin{itemize}
    \item Take $J^a := \pi^{-1}(a)$ for every $a \in V(\Gamma)$.
    \item We define $\hh_{vw}^e := \C[\set{(\epsilon: v \rightarrow w)\in E(\Lambda) \mid \pi(\epsilon) = e}]$ taking the formal complex linear span. By regarding the edges as an orthonormal basis, $\hh_{vw}^e$ is then turned into a Hilbert space. Now take $\hh^e := \oplus_{vw} \hh_{vw}^e$. We remark that since $\Lambda$ is a balanced $\Gamma$-fair graph, $\hh^e$ must be row and column finite. Thus $\hh^e \in \text{Hilb}^{J^a \times J^b}_f$.
    \item Let $\overline{\, \cdot \,}$ be a fixed but arbitrary involution on $E(\Lambda)$, satisfying the conditions in Definition \ref{balanced}, whose existence is guaranteed by hypothesis. Notice that this involution naturally extends to a well-defined anti-linear map $\overline{\, \cdot \,}: \hh^e_{vw}\rightarrow \hh^{\bar{e}}_{wv}$, for which we keep the same notation. We similarly take $\Phi_{vw}^e: \hh^e_{vw} \rightarrow \hh^{\overline{e}}_{wv}$ as the unique anti-linear map, defined on the standard basis vectors as $\epsilon \mapsto w(\epsilon)^{1/2} \, \overline{\epsilon}$. Now we find
    \begin{align*}
        \Phi^{\overline{e}}_{wv}\Phi^e_{vw} \epsilon &= \Phi^{\overline{e}}_{wv} w(\epsilon)^{\frac{1}{2}} \, \overline{\epsilon}\\
        &= w(\overline{e})^{\frac{1}{2}} w(\epsilon)^{\frac{1}{2}} \epsilon\\
        &= \epsilon. 
    \end{align*}
    Hence $\Phi^{\overline{e}}_{wv}\Phi^e_{vw} = \ONE_{\hh^e}$. Furthermore, for each edge $e\in E(\Gamma)$ we have that
    \begin{align*}
    \sum_{w \,\in\, J^b} \Tr\big((\Phi^e_{vw})^* \Phi^e_{vw}\big) = \sum_{\substack{source(\epsilon) \,=\, v \\ \pi(\epsilon) \,=\, e}} w(\epsilon) = \delta_e.
    \end{align*}
    It then follows by Proposition (\ref{op_prop}) that the family $\set{\Phi^e_{vw}}$ uniquely define $\set{C^e_{vw}}$ that satisfy the zig-zag relations.
\end{itemize}
\end{definition}

\begin{remark}\label{involutions}
We remark that if we have two balanced involutions $(\,\overline{\, \cdot \,}^{\,_1}\,)$ and $(\,\overline{\, \cdot \,}^{\,_2}\,)$ on a given $\Gamma$-fair and balanced graph $(\Lambda,w,\pi),$ the associated $\Gamma$-fundamental solutions to families of anti-linear maps, $\{\Phi^e_{vw}\}$ and $\{\Psi^e_{vw}\}$ define isomorphic canonical strict $*$-pseudofunctors from TLJ$(\Gamma)$ into $\ucat$. To see this it suffices to verify it on the (basis) edges. Consider the associated Hilbert space $\hh^e_{vw}.$ If we assume $e = \bar{e}$ and $v = w$, we need to construct a unitary $U^e_{vv} : \hh^e_{vv} \rightarrow \hh^e_{vv}$ such that $\Phi_{vv}^{e} = U^e_{vv}\circ\Psi_{vv}^{e}\circ (U^e_{vv})^*.$ Notice how if $\{\epsilon_i :\alpha\rightarrow\alpha\}_{i = 1}^{M}$ are all the loops in $\Lambda$ coming out of $\alpha\in V(\Lambda)$ projecting onto $e$ in $\Gamma$, we can re-enumerate them starting by the fixed edges ($\overline{\,\epsilon_{i} \,}^{\,_1} = \epsilon_{i}$) as $\{\epsilon_{i_0}\}_{i_0\in I_0},$ and the remaining edges in such a way that $\overline{\, \epsilon_{2i-1} \,}^{\,_1} = \epsilon_{2i}.$  We can therefore express $\{\epsilon_i\}_{i = 1}^{M} = \{\epsilon_{i_0}\}_{i_0\in I_0}\bigsqcup\{\epsilon_{i_1}\}_{i_1\in I_1} \bigsqcup \{ \epsilon_{i_x}\}_{i_x\in I_x}\bigsqcup \hdots \bigsqcup \{ \epsilon_{i_z}\}_{i_z\in I_z},$ corresponding to the fibers of the weight. Here $1 < x < \hdots < z,$ and moreover, $I_0$ denotes the edges fixed by $(\overline{\,\cdot\,}^{\,_1}).$ Notice then that both involutions simply permute these sets, respecting the partition by weights. We therefore express our involutions as the disjoint product of transpositions:\\

$(\,\overline{\, \cdot \,}^{\,_1}\,) : \
\underbrace{(1)(2)\hdots (n_1-1)}_{ \text{fixed points (weight  1)}}\cdot\underbrace{(n_1\ n_1+1)\hdots (n_x-2\ n_x-1)}_{\text{weight}\ 1 }\cdot $\\ 

\hspace{26 mm}$\cdot \underbrace{(n_x\ n_x+1)(n_x+2\ n_x+3)\hdots(n_y-2\ n_y-1)}_{\text{weight $x$ or $1/x$}}\cdot\hdots\cdot \underbrace{(n_z\ n_z+1)\hdots (M-1\ M)}_{\text{weight $z$ or $1/z$}},$
\ \\

and $(\,\overline{\, \cdot \,}^{\,_2}\,)$ in the symbols $\xi_k$, expressed as

\ \\

$(\,\overline{\, \cdot \,}^{\,_2}\,) : \
\underbrace{(\xi_1)(\xi_2)\hdots (\xi_{n_1-1})}_{ \text{fixed points (weight 1)}}\cdot\underbrace{(\xi_{n_1}\ \xi_{n_1+1})\hdots (\xi_{n_x-2}\ \xi_{n_x-1})}_{\text{weight}\ 1 }\cdot$ \\

\hspace{23 mm}$\cdot \underbrace{(\xi_{n_x}\ \xi_{n_x+1})(\xi_{n_x+2}\ \xi_{n_x+3})\hdots(\xi_{n_y-2}\ \xi_{n_y-1})}_{\text{weight $x$ or $1/x$}}\cdot\hdots\cdot \underbrace{(\xi_{n_z}\ \xi_{n_z+1})\hdots (\xi_{M-1}\ \xi_M)}_{\text{weight $z$ or $1/z$}}.$\\
For each weight $x$ with $1 < x$, we denote by $g_x$ the uniquely determined permutation such that $$g_x(\xi_{n_x}\ \xi_{n_x+1})\hdots(\xi_{n_y-2}\ \xi_{n_y-1})g_x^{-1} = (n_x\ n_x+1)\hdots(n_y-2\ n_y-1).$$ We are now ready to describe $U^e_{vv}$ in terms of its action on this ordered basis: for the basis elements whose weight is given by $x > 1$, we simply define $U^e_{vv}$ to act as the corresponding permutation $g_x$. We are now only left with edges whose weight is one. By observing that if an expression of the form $(\xi)(\gamma)$ appears in either involution, it can be made unitarily equivalent to the involution containing all the same fixed points and transpositions, but containing $(\xi\ \gamma)$. We thus define the action of $U^e_{vv}$ on the subspace these edges generates is described by first applying the unitary matrix:
\begin{center}$
    \begin{bmatrix}
    1/\sqrt{2} & 1/\sqrt{2}\\
    i/\sqrt{2} & -i/\sqrt{2}
    \end{bmatrix},$
\end{center}
followed by the permutation switching the corresponding symbols from one involution to the other. If $\xi$ remains fixed by both involutions, we let $U^e_{vv}$ act trivially on $\xi$. This fully determines $U^e_{vv}$, as we described its action on a basis.

In any other case, whenever $e\neq \bar{e}$ or $v\neq w,$ we find that for each duality pair $\{e, \bar{e}\}$ and each pair of vertices $\{v,w\}$ we have that
$$\Phi^e_{vw}(\epsilon) = \bigg[(\overline{\, \, \overline{\,\cdot\,}^{\,_2}}^{\,_1}) \circ \Psi^e_{vw}\circ \ONE_{\hh^e_{vw}} \bigg](\epsilon), \text{ and}$$

$$\Phi^{\overline{e}}_{wv}(\epsilon) = \bigg[ \ONE_{\hh^e_{vw}} \circ \Psi^{\overline{e}}_{wv}\circ (\overline{\, \, \overline{\,\cdot\,}^{\,_2}}^{\,_1})^* \bigg](\epsilon) = \bigg[ \ONE_{\hh^e_{vw}} \circ \Psi^{\overline{e}}_{wv}\circ (\overline{\, \, \overline{\,\cdot\,}^{\,_1}}^{\,_2}) \bigg](\epsilon).$$
Here, $ U^{\overline{e}}_{wv} := (\overline{\, \, \overline{\,\cdot\,}^{\,_2}}^{\,_1})$ and $U^e_{vw} := \ONE_{\hh^e_{vw}}$.

These cases provide explicit unitaries witnessing the equivalence of $\Gamma$-fundamental solutions. Finally, to be able to use Proposition \ref{op-eq}, we need to verify that for each $a\in V(\Gamma)$ and each pair $v,w\in J^a$ we have that $C^{\ONE_a}_{vw} = \tilde{C}^{\ONE_a}_{vw}.$ However, one can see this by computation, using the unitaries described above and thus completing the proof.
\end{remark}

The following theorem further reduces the equivalence of $*$-pseudofunctors $\ff: \TLJ\rightarrow \hilb$ in terms of balanced $\Gamma$-fair graphs:

\begin{theorem}
 When $S,T$ are $\Gamma$-fundamental solutions in $\hilb$ with $\hh^{\ONE_a} = \tilde{\hh}^{\ONE_a}$ and $C^{\ONE_a}= \tilde{C}^{\ONE_a}$ for each vertex $a\in V(\Gamma)$, then the associated $\Gamma$-fair graphs $(\Lambda_S,w_S,\pi_S)$ and $(\Lambda_T,w_T,\pi_T)$ are isomorphic as $\Gamma$-fair graphs if and only if the strict $*$-pseudofunctors $\TLJ\rightarrow \ucat$ induced by $\Theta[S]$ and $\Theta[T]$ are unitarily equivalent.
\end{theorem}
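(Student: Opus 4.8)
The plan is to deduce the statement directly from Proposition~\ref{op-eq}, which already translates unitary equivalence of the modules induced by $\Theta[S]$ and $\Theta[T]$ into the existence of vertex bijections $\varphi^a\colon J^a\to\tilde J^a$ together with unitaries $U^e_{vw}\colon\tilde\hh^e_{\varphi^a(v)\varphi^b(w)}\to\hh^e_{vw}$ satisfying $\Phi^e_{vw}=U^{\overline e}_{wv}\circ\Psi^e_{\varphi^a(v)\varphi^b(w)}\circ(U^e_{vw})^*$ (the standing hypothesis $\hh^{\ONE_a}=\tilde\hh^{\ONE_a}$, $C^{\ONE_a}=\tilde C^{\ONE_a}$ of that proposition is exactly the one imposed in the theorem). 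Since by construction the graph $(\Lambda_S,w_S,\pi_S)$ remembers precisely the numbers $\dim\hh^e_{vw}$ together with the spectra $\sigma((\Phi^e_{vw})^*\Phi^e_{vw})$ counted with multiplicity, and analogously for $T$, it therefore suffices to show that such $\varphi^a$ and $U^e_{vw}$ exist if and only if there is a graph isomorphism $\varphi\colon\Lambda_S\to\Lambda_T$ with $\pi_T\circ\varphi=\pi_S$ and $w_T\circ\varphi=w_S$, i.e.\ an isomorphism of $\Gamma$-fair graphs in the sense of Definition~\ref{zen}.

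For the direction ``unitarily equivalent $\Rightarrow$ isomorphic'', I would start from the data of Proposition~\ref{op-eq}. A short computation with anti-linear adjoints turns the intertwining relation into the genuine linear unitary conjugacy $(\Phi^e_{vw})^*\Phi^e_{vw}=U^e_{vw}\,(\Psi^e_{\varphi^a(v)\varphi^b(w)})^*\Psi^e_{\varphi^a(v)\varphi^b(w)}\,(U^e_{vw})^*$, so for every triple $(e,v,w)$ one gets $\dim\hh^e_{vw}=\dim\tilde\hh^e_{\varphi^a(v)\varphi^b(w)}$ and the eigenvalue multisets of the two positive operators agree. Then I define $\varphi$ on $V(\Lambda_S)=\sqcup J$ by the $\varphi^a$, and on edges by choosing, for each $(e,v,w)$, any weight-preserving bijection from the $\dim\hh^e_{vw}$ arrows of $\Lambda_S$ over $e$ from $v$ to $w$ onto the corresponding arrows of $\Lambda_T$; the spectral matching makes this possible, and by construction $\pi_T\circ\varphi=\pi_S$, $w_T\circ\varphi=w_S$.

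For ``isomorphic $\Rightarrow$ unitarily equivalent'', let $\varphi$ be such an isomorphism. Its restriction to $\pi_S^{-1}(a)=J^a$ gives bijections $\varphi^a$, and $\pi$-compatibility forces $\varphi$ to send the arrows over $e$ from $v$ to $w$ onto those over $e$ from $\varphi^a(v)$ to $\varphi^b(w)$ preserving weights, so again the dimensions and the spectra of $(\Phi^e_{vw})^*\Phi^e_{vw}$ and $(\Psi^e_{\varphi^a(v)\varphi^b(w)})^*\Psi^e_{\varphi^a(v)\varphi^b(w)}$ agree with multiplicity. The real work is to manufacture the $U^e_{vw}$. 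Using the left polar decompositions $\Phi^e_{vw}=V^e_{vw}|\Phi^e_{vw}|$ and $\Psi^e_{\varphi^a(v)\varphi^b(w)}=W^e_{vw}|\Psi^e_{\varphi^a(v)\varphi^b(w)}|$ with $V^e_{vw},W^e_{vw}$ anti-unitary (as in the proof of Proposition~\ref{is balanced}), for each unordered duality pair $\{e,\overline e\}$ and each $(v,w)$ with $(e,v,w)\neq(\overline e,w,v)$ I pick a linear unitary $U^e_{vw}$ conjugating $|\Psi^e_{\varphi^a(v)\varphi^b(w)}|$ to $|\Phi^e_{vw}|$ (possible since the spectra agree) and set $U^{\overline e}_{wv}:=V^e_{vw}\circ U^e_{vw}\circ(W^e_{vw})^*$, which is unitary since it is anti-unitary $\circ$ unitary $\circ$ anti-unitary; the identities $V^{\overline e}_{wv}=(V^e_{vw})^*$ and $|\Phi^{\overline e}_{wv}|=V^e_{vw}|\Phi^e_{vw}|^{-1}(V^e_{vw})^*$ from Proposition~\ref{is balanced} (and their $\Psi$-analogues) then yield the intertwining relation for both $(e,v,w)$ and $(\overline e,w,v)$, and Proposition~\ref{op-eq} closes this case.

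The genuinely delicate case, which I expect to be the main obstacle, is that of a self-dual loop, $e=\overline e$ with $v=w$: now $e$-at-$(v,v)$ is its own dual slot, so one must produce a single unitary $U^e_{vv}$ that simultaneously conjugates $|\Psi^e_{\varphi^a(v)\varphi^a(v)}|$ to $|\Phi^e_{vv}|$ and $W^e_{vv}$ to $V^e_{vv}$. The zig-zag relation $\Phi^{\overline e}_{vv}\Phi^e_{vv}=\ONE_{\hh^e_{vv}}$ forces $\Phi^e_{vv}$ to be an anti-linear involution, hence $V^e_{vv}$ is an anti-unitary involution and $\sigma(|\Phi^e_{vv}|)$ is symmetric under $\lambda\mapsto\lambda^{-1}$ (likewise for $\Psi$). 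Decomposing $\hh^e_{vv}$ into eigenspaces of $|\Phi^e_{vv}|$, on the $\lambda=1$ eigenspace the positive-part condition is vacuous and one only needs to conjugate one anti-unitary involution to another on spaces of equal dimension, while on each block $E_\lambda\oplus E_{1/\lambda}$ with $\lambda\neq1$ the anti-unitary $V^e_{vv}$ (resp.\ $W^e_{vv}$) is determined by a single anti-unitary $E_\lambda\to E_{1/\lambda}$ and any two such are unitarily conjugate; assembling the block choices produces $U^e_{vv}$. This is exactly the mechanism behind Remark~\ref{involutions} and \cite[Prop.~2.3]{DeCommer2013TannakaSU2} — the fact that the balanced involution is a property and not extra structure — and, as there, it is the only point that is more than routine spectral linear algebra.
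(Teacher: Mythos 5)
Your proposal is correct, and its first half coincides with the paper's argument: both deduce from Proposition \ref{op-eq} that a unitary equivalence yields bijections $\varphi^a$ and unitaries $U^e_{vw}$ with $(\Phi^e_{vw})^*\Phi^e_{vw} = U^e_{vw}(\Psi^e_{\varphi^a(v)\varphi^b(w)})^*\Psi^e_{\varphi^a(v)\varphi^b(w)}(U^e_{vw})^*$, and then read off the graph isomorphism from equality of dimensions and of spectra with multiplicity. Where you genuinely diverge is the converse. The paper does not build the intertwining unitaries from $S$ and $T$ directly: it passes to the fundamental solutions \emph{generated by} the graphs $\Lambda_S$ and $\Lambda_T$, takes $U^e_{vw}$ to be the permutation unitaries induced by the graph isomorphism on basis edges, and then invokes the involution-comparison machinery of Remark \ref{involutions} (the explicit permutation bookkeeping and the $2\times 2$ unitary trick) to reconcile the two choices of balanced involution; implicitly this also leans on the identification of $S$ with the solution generated by $\Lambda_S$, which is only established later in Theorem \ref{classification}. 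You instead work with $S$ and $T$ themselves, using the polar decompositions $\Phi^e_{vw}=V^e_{vw}|\Phi^e_{vw}|$ and the identities $V^{\overline e}_{wv}=(V^e_{vw})^*$, $|\Phi^{\overline e}_{wv}|=V^e_{vw}|\Phi^e_{vw}|^{-1}(V^e_{vw})^*$ from Proposition \ref{is balanced}: for a non-self-dual slot you choose $U^e_{vw}$ conjugating the positive parts and set $U^{\overline e}_{wv}:=V^e_{vw}U^e_{vw}(W^e_{vw})^*$ (a quick check confirms this gives the op-eq relation for both $e$ and $\overline e$), and for a self-dual loop slot you exploit that $\Phi^e_{vv}$ squares to the identity, so $V^e_{vv}$ is a conjugation and the spectrum of $|\Phi^e_{vv}|$ is inversion-symmetric, allowing a block-by-block construction ($\lambda=1$ block: any two conjugations of equal rank are unitarily conjugate; $\lambda\neq 1$ blocks: choose $u_\lambda$ freely and force $u_{1/\lambda}$). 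This is a complete and self-contained argument, closer in spirit to \cite{DeCommer2013TannakaSU2}; what it buys is independence from Remark \ref{involutions} and from the later equivalence $S\simeq T_{\Lambda_S}$, at the cost of redoing spectral linear algebra that the paper prefers to quarantine in that remark (where it is reused for Theorem \ref{classification} anyway). Both routes hinge on the same underlying fact you identify: the balanced involution is a property, not structure, so its ambiguity can always be absorbed into a unitary.
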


\begin{proof}
From Proposition \ref{op-eq}, two fundamental solutions $S,T$ with associated anti-linear maps $\set{\Phi^e_{vw}}$ and $\set{\Psi^e_{vw}}$, respectively, induce unitarily equivalent $*$-pseudofunctors if and only if for every vertex  $a \in V(\Gamma)$ there exists a bijection $\varphi^a: J^a \rightarrow \tilde{J}^a$ and every edge $e: a \rightarrow b$ there exists a unitary
$$U^e_{vw}: \tilde{\hh}^{e}_{\varphi^a(v)\varphi^b(w)} \rightarrow \hh^e_{vw}$$
such that
$$\Phi^e_{vw} = U^{\overline{e}}_{wv} \Psi^{e}_{\varphi^a(v)\varphi^b(w)} (U^{e}_{vw})^*.$$
Observe that the collection of bijections $\set{\varphi^a}_{a \in V(\Gamma)}$, induces an obvious bijection between $V(\Lambda_S)$ and $V(\Lambda_T)$. Furthermore,
\begin{align*}
    \sigma\Big((\Phi^e_{vw})^* \Phi^e_{vw} \Big) =  \sigma\Big( U^{e}_{vw} (\Psi^e_{\varphi^a(v)\varphi^b(w)})^* \Psi^e_{\varphi^a(v)\varphi^b(w)} (U^{e}_{vw})^* \Big) = \sigma\Big((\Psi^e_{\varphi^a(v)\varphi^b(w)})^* \Psi^e_{\varphi^a(v)\varphi^b(w)} \Big)
\end{align*}
It then follows that $\Lambda_S$ and $\Lambda_T$ are isomorphic since there exists a graph isomorphism $\varphi: \Lambda_S \rightarrow \Lambda_T$ with $\pi_S = \pi_T\circ \varphi$ and $w_S = w_T\circ \varphi.$

We shall now prove the forward direction. If $\Lambda_1$ and $\Lambda_2$ are isomorphic as balanced $\Gamma$-fair graphs, then there exists a graph isomorphism $\varphi: \Lambda_1 \rightarrow \Lambda_2$ intertwining the data from these graphs. Now consider the fundamental solutions $S_{\Lambda_1}$ and $S_{\Lambda_2}$ generated by $\Lambda_1$ and $\Lambda_2,$ respectively. By restricting $\varphi$ to $J^a := \pi^{-1}_1(a)$ we obtain bijections between $J^a$ and $\tilde{J}^a$, since $\pi_1 = \pi_2 \circ \varphi$. 
Furthermore, consider the maps $U^e_{vw}$ to be the (unitary) linear extension of $\varphi: E(\Lambda_1) \rightarrow E(\Lambda_2)$, as restricted to the corresponding vertices. Thus defining unitaries $U^e_{vw}: \hh^e_{vw} \rightarrow \tilde{\hh}^e_{\phi(v)\phi(w)}$. We now observe that $\varphi^{-1}(\,\overline{\, \varphi(\cdot) \,}^{\,_2}\,) = (U^{e}_{vw})^{*}(\,\overline{\, U^e_{vw}(\cdot) \,}^{\,_2}\,)$ is another balanced $\Gamma$-fair involution on $\Lambda_1$ which is manifestly unitarily equivalent to $(\,\overline{\, \cdot \,}^{\,_2}\,)$. Moreover, by Remark \ref{involutions}, this new involution on $\Lambda_1$ is unitarily equivalent to $(\,\overline{\, \cdot \,}^{\,_1}\,).$ Finally, by Proposition $\ref{op-eq}$, the graphs $S_{\Lambda_1}$ and $S_{\Lambda_2}$ induce unitarily equivalent $*$-pseudofunctors.
\end{proof}

We are now ready to provide a classification of our unitary $\TLJ$-modules.
\begin{theorem}\label{classification}
Every balanced $\Gamma$-fair graph arises from a $\Gamma$-fundamental solution in $\hilb$. Furthermore, there is an equivalence of isomorphism classes of balanced $\Gamma$-fair graphs and unitary isomorphism classes of strong $*$-pseudofunctors $\TLJ \rightarrow \hilb$.
\end{theorem}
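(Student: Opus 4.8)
The plan is to assemble the main theorem from the components already established in the excerpt. The first sentence, ``every balanced $\Gamma$-fair graph arises from a $\Gamma$-fundamental solution in $\hilb$,'' is essentially the construction immediately preceding Remark \ref{involutions}: given a balanced $\Gamma$-fair graph $(\Lambda,w,\pi)$, set $J^a := \pi^{-1}(a)$, let $\hh^e_{vw}$ be the span of edges $\epsilon:v\to w$ with $\pi(\epsilon)=e$, choose a balanced involution $\overline{\,\cdot\,}$ on $E(\Lambda)$, and define $\Phi^e_{vw}(\epsilon) := w(\epsilon)^{1/2}\overline{\epsilon}$; the computations there verify \eqref{zig} and \eqref{zag}, so by Proposition \ref{op_prop} the family $\{\Phi^e_{vw}\}$ produces a $\Gamma$-fundamental solution $S_\Lambda$ in $\hilb$. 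Composing with Proposition \ref{canonical} (after pushing forward via $\Theta$, or directly in $\hilb$ using Proposition \ref{strictify} to see that strict pseudofunctors suffice) gives a strong $*$-pseudofunctor $\TLJ\to\hilb$. This settles existence.

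For the equivalence of isomorphism classes, I would set up the two assignments and show they are mutually inverse on the level of isomorphism/unitary-isomorphism classes. In one direction: a $\Gamma$-fundamental solution $S=(J,H,C)$ in $\hilb$ yields, via the spectral data of $(\Phi^e_{vw})^*\Phi^e_{vw}$, the graph $(\Lambda_S,w_S,\pi_S)$, which is a balanced $\Gamma$-fair graph by Proposition \ref{is balanced}. In the other direction: a balanced $\Gamma$-fair graph $\Lambda$ yields $S_\Lambda$ as above. The two key ``mutually inverse'' checks are: (i) $\Lambda_{S_\Lambda}\cong\Lambda$ as $\Gamma$-fair graphs — this is a direct unwinding, since $\hh^e_{vw}$ for $S_\Lambda$ has a basis indexed by the edges $\epsilon:v\to w$ over $e$, and the operator $(\Phi^e_{vw})^*\Phi^e_{vw}$ is diagonal in that basis with eigenvalue $w(\epsilon)^{1/2}\cdot w(\overline{\epsilon})^{... }$ — more carefully, $|\Phi^e_{vw}|^2\epsilon = w(\epsilon)\,\epsilon$ wait, one must recompute: $\Phi^{\overline e}_{wv}\Phi^e_{vw}=\ONE$ forces $w(\epsilon)w(\overline\epsilon)=1$, and $(\Phi^e_{vw})^*\Phi^e_{vw}\epsilon = w(\epsilon)\epsilon$ follows from the definition, so the weights are recovered exactly; and (ii) $S_{\Lambda_S}$ is unitarily equivalent to $S$ — here I would invoke the polar decomposition $\Phi^e_{vw}=V^e_{vw}|\Phi^e_{vw}|$ from the proof of Proposition \ref{is balanced}, diagonalize $|\Phi^e_{vw}|$ with an ONB of eigenvectors, and check that in this basis $S$ looks exactly like the canonical model $S_{\Lambda_S}$ up to a unitary; then apply Proposition \ref{op-eq} to conclude the induced $*$-pseudofunctors agree up to unitary equivalence.

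The final step is to combine this bijection-on-objects with the immediately preceding (unnumbered) theorem, which states that $\Lambda_S\cong\Lambda_T$ as $\Gamma$-fair graphs if and only if the strict $*$-pseudofunctors induced by $\Theta[S]$ and $\Theta[T]$ are unitarily equivalent (under the normalization $\hh^{\ONE_a}=\tilde\hh^{\ONE_a}$, $C^{\ONE_a}=\tilde C^{\ONE_a}$, which one reduces to by the relabeling Remark after Corollary \ref{funda2}); together with Proposition \ref{strictify} (strong $*$-pseudofunctors are unitarily equivalent to strict ones) and Proposition \ref{catequivalence} (transporting between $\ucat$ and $\hilb$). I would phrase the conclusion as: the map $\Lambda\mapsto[\ff_{S_\Lambda}]$ is a well-defined bijection from isomorphism classes of balanced $\Gamma$-fair graphs to unitary isomorphism classes of strong $*$-pseudofunctors $\TLJ\to\hilb$, with inverse $[\ff]\mapsto[\Lambda_{S_{\ff}}]$ where $S_\ff$ is the fundamental solution extracted from $\ff$ via Proposition \ref{strictify}.

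The main obstacle I anticipate is the bookkeeping around the normalization hypothesis $\hh^{\ONE_a}=\tilde\hh^{\ONE_a}$ and $C^{\ONE_a}=\tilde C^{\ONE_a}$: the preceding theorem and Corollary \ref{funda2} are only stated with this hypothesis, whereas arbitrary balanced $\Gamma$-fair graphs (and arbitrary strong $*$-pseudofunctors) need not have matching identity-edge data or even matching indexing sets. Discharging this requires carefully applying the relabeling remark after Corollary \ref{funda2} to replace $T$ by a $\varphi^{-1}[T]$ with matching indexing sets, and observing that $\hh^{\ONE_a}$ and $C^{\ONE_a}$ are canonically determined (the identity 1-morphism is $\ONE_{J^a}$ and $C^{\ONE_a}$ is forced by the zigzag relations to be essentially the canonical co-evaluation), so the normalization can always be arranged within a unitary isomorphism class without loss of generality. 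Once that reduction is in place, the rest is assembling citations.
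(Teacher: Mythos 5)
Your proposal is correct and follows essentially the same route as the paper: construct $S_\Lambda$ from a balanced $\Gamma$-fair graph via $\Phi^e_{vw}(\epsilon)=w(\epsilon)^{1/2}\overline{\epsilon}$ and Proposition \ref{op_prop}, check $\Lambda_{S_\Lambda}=\Lambda$ by reading off the spectrum $(\Phi^e_{vw})^*\Phi^e_{vw}\epsilon=w(\epsilon)\epsilon$, show $S_{\Lambda_S}$ is unitarily equivalent to $S$ using the polar-decomposition/eigenbasis data together with the involution-independence of Remark \ref{involutions} and Proposition \ref{op-eq}, and then assemble the class-level equivalence from the preceding theorem, Proposition \ref{strictify}, and Proposition \ref{catequivalence}. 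Your extra care about the normalization $\hh^{\ONE_a}=\tilde{\hh}^{\ONE_a}$, $C^{\ONE_a}=\tilde{C}^{\ONE_a}$ and the relabeling of index sets is a slightly more explicit treatment of a point the paper handles only in passing, but it is not a different method.
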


\begin{proof}
Let $(\Lambda,w,\pi)$ be a fixed but arbitrary balanced $\Gamma$-fair graph. We shall now construct a fundamental solution $S$ in $\hilb$ such that $\Lambda = \Lambda_S$. For each $a\in V(\Gamma)$, take $J^a := \pi^{-1}(a)$. For $e\in E(\Gamma)$ define $\hh^e_{vw}$ to be vector space spanned by the edges in $\Lambda$ having source $v \in J^{a}$ and range $w \in J^b$ such that $\pi(a\rightarrow b) = e$, and turn it into a Hilbert space by declaring these edges be orthonormal. Then let $\hh^e := \oplus_{vw} \hh^e_{vw}$. Since $\Lambda$ is balanced $\Gamma$-fair, the number of edges coming in or out of any vertex in $\Lambda$ must be uniformly bounded. To see this, we observe first that the sum of the weights of edges in $\pi^{-1}(e)$ must be equal to the sum of their inverses, and second that the $\Gamma$-fair condition imposes that $\pi^{-1}(e)$ is a finite set, as each conjugate adds a weight of at least 1 to that of $e$. Thus, $\hh^e \in \text{Hilb}^{J^a \times J^b}_f$ is a 1-morphism in $\hilb$. 
 Finally, we define $\Phi^e_{vw}: \hh^e_{vw} \rightarrow \hh^{\overline{e}}_{wv}$ as the unique anti-linear map, defined on the standard basis vectors as $\epsilon\mapsto w(\epsilon)^{1/2} \, \overline{\epsilon}$. Here, $(\overline{\, \cdot \,})$ is a fixed but arbitrary involution arising from the balanced hypothesis. Notice that by Remark \ref{involutions}, this definition is independent of the choice of $(\overline{\, \cdot \,})$. It then follows that
\begin{align*}
    \Phi^{\overline{e}}_{wv}\Phi^e_{vw} (\epsilon) &= \Phi^{\overline{e}}_{wv} w(\epsilon)^{\frac{1}{2}} \, (\overline{\epsilon})\\
    &= w(\overline{e})^{\frac{1}{2}} w(\epsilon)^{\frac{1}{2}} \cdot \epsilon\\
    &= \epsilon. 
\end{align*}
Hence $\Phi^{\overline{e}}_{wv}\Phi^e_{vw} = \ONE_{\hh^e_{vw}}$. Furthermore, 
\begin{align*}
\sum_{w \,\in\, J^b} \Tr\big((\Phi^e_{vw})^* \Phi^e_{vw}\big) = \sum_{\substack{source(\epsilon) \,=\, v \\ \pi(\epsilon) \,=\, e}} w(\epsilon) = \delta_e.
\end{align*}
It then follows by Proposition \ref{op_prop} that the family $\set{\Phi^e_{vw}}$ uniquely define $\set{C^e_{vw}}$ that satisfy the zig-zag relations. Therefore the tuple $S = (J,H,C)$ we constructed from $\Lambda$ is a $\Gamma$-fundamental solution in $\hilb$. We now check that $S$ generates $\Lambda$: First notice that $V(\Lambda_S) = V(\Lambda)$ and by how the maps $\set{\Phi^e_{vw}}$ were constructed, $\sigma\big((\Phi^e_{vw})^* \Phi^e_{vw}\big) = \big(w(\epsilon)\big)$ for every $e \in E(\Gamma)$, where both sides are counted with multiplicity. We conclude that $\Lambda_S = \Lambda$.

We shall now show that from a balanced $\Gamma$-fair graph $(\Lambda_S, w_S,\pi_S)$ generated by fundamental solution $S = (J ,\hh, C)$ in $\hilb$, the fundamental solution $T_\Lambda = (\tilde{J},\tilde{\hh},\tilde{C})$ we construct from $(\Lambda_S, w_S,\pi_S)$ is unitarily equivalent to $S$. It is easy to see that $J = \tilde{J}$ and $\hh = \tilde{\hh},$ so it suffices to exhibit the equivalence between $C$ and $\tilde{C}.$ We now endow $\Lambda_S$ with an involution. By (the proof of) Proposition \ref{is balanced}, we know that there exists a balanced $\Gamma$-fair involution on $\Lambda_S$ coming from the spectrum of the maps $\Phi$ associated to $\{C\}$. We denote this involution by $(\,\overline{\, \cdot \,}^{\,_1}\,).$

 Now, to construct the associated linear maps $\Psi$ of $T_\Lambda$, we can make use of any balanced $\Gamma$-fair involution on $\Lambda_S$, denoted by $(\,\overline{\, \cdot \,}^{\,_2}\,).$ However, as explained in Remark \ref{involutions}, we also have that for each $a\in V(\Gamma)$ and each pair $v,w\in J^a$ we have $C^{\ONE_a}_{vw} = \tilde{C}^{\ONE_a}_{vw},$ thus obtaining unitarily equivalent families of maps $\{\Phi\}$ and $\{\Psi\}$. Finally, with an application of Proposition \ref{op-eq} the proof is complete.
\end{proof}

In their paper (Corollary B, \cite{Coles2018TheAlgebras}), they classify right cyclic pivotal TLJ$(d)$ C$^*$-modules in terms of bipartite graphs equipped with a dimension function satisfying a Perron-Frobenius condition. (Compare with Remark \ref{MW condition}.) There is a clear indication that this result should generalize to the $*$-2-categorical context for unitary $\TLJ$-modules. We leave this exploration to a future work and limit ourselves to state the following conjecture:

\begin{conj}
Equivalence classes of right cyclic pivotal unitary $\TLJ$-modules correspond to MW-type bipartite balanced $\Gamma$-fair graphs.
\end{conj}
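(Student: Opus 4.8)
The plan is to prove Theorem \ref{classification} by assembling the ingredients established in Sections \ref{equivalences} and \ref{last_section} into a bijection on equivalence classes. First I would observe that the two directions of the correspondence have already been built: Proposition \ref{is balanced} shows that every $\Gamma$-fundamental solution $S$ in $\hilb$ yields a balanced $\Gamma$-fair graph $(\Lambda_S, w_S, \pi_S)$, and the construction following Definition \ref{balanced} (the passage $\Lambda \mapsto S_\Lambda$) produces, from any balanced $\Gamma$-fair graph, a $\Gamma$-fundamental solution whose associated anti-linear maps act on basis edges by $\epsilon \mapsto w(\epsilon)^{1/2}\overline{\epsilon}$. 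By Proposition \ref{canonical} and Proposition \ref{strictify}, strong $*$-pseudofunctors $\TLJ \to \hilb$ up to unitary equivalence are the same data as $\Gamma$-fundamental solutions in $\ucat$ (equivalently, via Proposition \ref{catequivalence}, in $\hilb$) up to the equivalence described in Corollary \ref{funda2} and Proposition \ref{op-eq}. So the theorem reduces to: (a) surjectivity — every balanced $\Gamma$-fair graph is isomorphic to $\Lambda_S$ for some $S$; and (b) well-definedness and injectivity on equivalence classes — $\Lambda_S \cong \Lambda_T$ as $\Gamma$-fair graphs if and only if $S$ and $T$ induce unitarily equivalent pseudofunctors. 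Part (b) is exactly the content of the theorem immediately preceding Theorem \ref{classification}.

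For surjectivity I would take an arbitrary balanced $\Gamma$-fair graph $(\Lambda, w, \pi)$, run the construction of $S_\Lambda = (J, H, C)$ verbatim (setting $J^a := \pi^{-1}(a)$, letting $\hh^e_{vw}$ be the span of edges $v \to w$ over $e$ made orthonormal, and defining $\Phi^e_{vw}(\epsilon) := w(\epsilon)^{1/2}\overline{\epsilon}$ for a choice of balanced involution), and then check that $\Lambda_{S_\Lambda}$ recovers $\Lambda$. The vertex sets agree on the nose since $V(\Lambda_{S_\Lambda}) = \sqcup J = \sqcup\,\pi^{-1}(\cdot) = V(\Lambda)$, and for each edge $e$ the number of arrows $v \to w$ in $\Lambda_{S_\Lambda}$ is $\dim \hh^e_{vw}$, which is the number of $\epsilon \in E(\Lambda)$ with source $v$, target $w$, $\pi(\epsilon) = e$; finally the weights match because $(\Phi^e_{vw})^*\Phi^e_{vw}$ is diagonal in the edge basis with eigenvalue $w(\overline{e})\,w(\epsilon)\cdots$ — more precisely one computes $(\Phi^e_{vw})^*\Phi^e_{vw}(\epsilon) = w(\epsilon)\,\epsilon$ directly, so $\sigma((\Phi^e_{vw})^*\Phi^e_{vw}) = (w(\epsilon))_\epsilon$ counted with multiplicity. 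One must note the $\Gamma$-fair and balanced conditions force $\pi^{-1}(e)$ to be finite (each dual edge contributes weight $\geq 1$ to the fairness sum $\delta_e$), so $\hh^e \in \mathrm{Hilb}^{J^a \times J^b}_f$ and $S_\Lambda$ is genuinely a $\Gamma$-fundamental solution in $\hilb$.

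For the other half of the bijection I would check that starting from $S = (J, \hh, C)$, forming $\Lambda_S$, and then forming $T_{\Lambda_S} = (\tilde J, \tilde \hh, \tilde C)$ returns a $\Gamma$-fundamental solution unitarily equivalent to $S$. Here $\tilde J = J$ and $\tilde\hh = \hh$ literally, so only $C$ versus $\tilde C$ must be compared. The subtlety is that $\Lambda_S$ carries a canonical balanced involution $(\,\overline{\,\cdot\,}^{\,_1}\,)$ coming from the polar decompositions in the proof of Proposition \ref{is balanced}, while $T_{\Lambda_S}$ is built using an arbitrary balanced involution $(\,\overline{\,\cdot\,}^{\,_2}\,)$; Remark \ref{involutions} supplies explicit unitaries $U^e_{vw}$ (block diagonal, built from permutations $g_x$ on weight-$x$ fibers and the matrix $\begin{bmatrix} 1/\sqrt2 & 1/\sqrt2 \\ i/\sqrt2 & -i/\sqrt2 \end{bmatrix}$ on weight-one transpositions) intertwining $\{\Phi\}$ and $\{\Psi\}$, and also records $C^{\ONE_a}_{vw} = \tilde C^{\ONE_a}_{vw}$, so that Proposition \ref{op-eq} applies and yields a unitary equivalence of the induced pseudofunctors. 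Combining this with the surjectivity above and with the preceding theorem's characterization of when $\Lambda_S \cong \Lambda_T$, the maps $[\Lambda] \mapsto [S_\Lambda]$ and $[S] \mapsto [\Lambda_S]$ are mutually inverse bijections between isomorphism classes of balanced $\Gamma$-fair graphs and unitary isomorphism classes of strong $*$-pseudofunctors $\TLJ \to \hilb$, completing the proof. The main obstacle I anticipate is bookkeeping the dependence on the choice of balanced involution and confirming that the $\hh^{\ONE_a}$, $C^{\ONE_a}$ normalizations required by Proposition \ref{op-eq} really do hold in both round-trips; this is where Remark \ref{involutions} does the heavy lifting and must be invoked carefully rather than the naive "choose any involution."
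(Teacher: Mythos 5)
The statement you were asked to prove is the final \emph{conjecture} of the paper, namely that equivalence classes of \emph{right cyclic pivotal} unitary $\TLJ$-modules correspond to \emph{MW-type bipartite} balanced $\Gamma$-fair graphs. Your proposal instead reconstructs the proof of Theorem \ref{classification} (the correspondence between \emph{all} balanced $\Gamma$-fair graphs and \emph{all} strong $*$-pseudofunctors $\TLJ\rightarrow\hilb$). That reconstruction is a faithful and essentially correct account of the paper's argument for Theorem \ref{classification} --- surjectivity via the construction $\Lambda\mapsto S_\Lambda$, the round trip $S\mapsto\Lambda_S\mapsto T_{\Lambda_S}$ handled through Remark \ref{involutions} and Proposition \ref{op-eq}, and injectivity on classes via the theorem preceding Theorem \ref{classification} --- but it is a proof of a different statement. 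The paper does not prove the conjecture at all; it explicitly defers it to future work, so there is no ``paper's own proof'' to match, and more importantly your argument never engages with any of the ingredients the conjecture actually requires.

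Concretely, the missing content is everything that distinguishes the conjecture from Theorem \ref{classification}: (i) a definition and analysis of what ``right cyclic pivotal'' means for a unitary $\TLJ$-module in this $*$-2-categorical setting (generalizing Corollary B of \cite{Coles2018TheAlgebras}); (ii) an argument that the pivotal structure forces the weights $w$ on $\Lambda$ to arise as ratios $d(\alpha)/d(\beta)$ of a Perron--Frobenius dimension function, i.e.\ that $\Lambda$ is MW-type --- by Remark \ref{MW condition} and the proposition following it, this amounts to showing every loop in $\Lambda$ has weight product $1$, which is a genuine restriction not satisfied by arbitrary balanced $\Gamma$-fair graphs; (iii) an argument that cyclicity and the relevant module structure force bipartiteness of $\Lambda$; and (iv) the converse construction of a right cyclic pivotal module from an MW-type bipartite graph, together with a check that the equivalences of Proposition \ref{op-eq} respect the pivotal data. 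Theorem \ref{classification} would certainly be an input to such a proof, but on its own it says nothing about which graphs correspond to the pivotal cyclic modules, so the proposal leaves the entire substance of the conjecture unaddressed.
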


\bibliographystyle{alpha}

\clearpage

\end{document}